\theoremstyle{plain}
        \newtheorem{theorem}{Theorem}[section]
        \newtheorem*{theorem*}{Theorem}
        \newtheorem*{conj*}{Conjecture}
        \newtheorem{lemma}[theorem]{Lemma}
        \newtheorem{cor}[theorem]{Corollary}
        \newtheorem{prop}[theorem]{Proposition}
\theoremstyle{definition}
        \newtheorem{definition}[theorem]{Definition}
\theoremstyle{remark}
        \newtheorem{remark}[theorem]{Remark}
        \newtheorem*{remark*}{Remark}
        \newtheorem{remarks}[theorem]{Remarks}
        \newtheorem*{remarks*}{Remarks}
        \newtheorem*{Reminder}{Reminder}
\numberwithin{equation}{section}
\def\reminder #1 {{\sf #1}}
\def\hide #1 {}
\long\def\longhide #1 {}
\newcommand{\diam}  {\operatorname{diam}}
\newcommand{\id} {\operatorname{id}}
\newcommand{\R}{\mathbb{R}}      
\newcommand{\C}{\mathbb{C}}      
\newcommand{\N}{\mathbb{N}}      
\newcommand{\Z}{\mathbb{Z}}      
\newcommand{\Halb}{\mathbb{H}}   
\newcommand{\CDach}{\widehat{\mathbb{C}}}
\newcommand{\RDach}{\widehat{\mathbb{R}}}
\newcommand{\D}{\mathbb{D}}      
\newcommand{\Aut}  {\operatorname{Aut}}
\newcommand{\Stab}  {\operatorname{Stab}}
\newcommand{\KC}{\mathcal{K}}
\newcommand{\JC}{\mathcal{J}}
\newcommand{\TC}{\mathcal{T}}
\newcommand{\Sp}{S^2}
\newcommand{\crit}{\operatorname{crit}}
\newcommand{\post}{\operatorname{post}}
\newcommand{\CC}{\mathcal{C}}
\newcommand{\SC}{\mathcal{S}}
\newcommand{\bT}{\mathbf{T}}
\newcommand{\Mdomain}{{\mathcal M}}
\newcommand{\Ker}{\operatorname{Ker}}
\newcommand{\ord}{\operatorname{ord}}
\newcommand{\FC}{\mathcal{F}}
\newcommand{\img}{\operatorname{IMG}}
\newcommand{\inter}{\operatorname{int}}
\newcommand{\XOw} {X^0_{{\tt w}}}
\newcommand{\XOb} {X^0_{{\tt b}}}
\newcommand{\wt}{{\tt w}}
\newcommand{\Rist}{\operatorname{Rist}}
\newcommand{\supp}{\operatorname{supp}}
\newcommand{\growth}{\operatorname{growth}}
\newcommand{\blue}[1]{\textcolor{blue}{#1}}
\begin{document}

\title{Exponential growth of some iterated monodromy groups}
\author{Mikhail Hlushchanka}
\address{Mikhail Hlushchanka, Department of Mathematics and Logistics, Jacobs
  University Bremen, Campus Ring 1, 28759 Bremen, Germany}
\email{m.hlushchanka@jacobs-university.de}
\author{Daniel Meyer}
\address{Daniel Meyer, Department of Mathematics and Statistics, P.O.Box 35, FI-40014 University of Jyv\"{a}skyl\"{a}, Finland}
\email{dmeyermail@gmail.com}
\date{\today}
\maketitle

\begin{abstract}
  Iterated monodromy groups of postcritically-finite rational
  maps form a rich class of self-similar groups with interesting
  properties. There are examples of such groups that
  have intermediate growth, as well as examples that have exponential growth. These
  groups arise from polynomials. We show exponential growth of
  the $\img$ of several non-polynomial maps. These include
  rational maps whose Julia set is the whole sphere, rational maps with 
  Sierpi\'{n}ski carpet Julia set, and obstructed Thurston
  maps. Furthermore, we construct the first example of a
  non-renormalizable polynomial with a dendrite Julia set whose
  $\img$ has exponential growth. 
\end{abstract}


\tableofcontents

\section{Introduction}
\label{sec:introduction}

The \emph{iterated monodromy group} ($\img$) is a group that is
defined in a natural way for certain dynamical systems, like
iteration of a rational function on the Riemann sphere $\CDach$. It was defined by
Nekrashevych, see for instance \cite{BGN}, and independently by
Kameyama in \cite{Kameyama}, see also \cite{Kameyama01}. Recently, the theory of $\img$'s, and its
applications to the study of dynamical systems, has been
developed rapidly, see in particular 
\cite{Nekra,NekraIMG11}. Many important problems in complex dynamics have
been solved with the help of $\img$'s, such as the 
\emph{Hubbard twisted rabbit problem}
\cite{BarNekr_Twist}. Iterated monodromy groups are in fact
\emph{self-similar groups}, that is, they act on a certain regular rooted tree in a ``self-similar'' fashion, and frequently have some interesting
algebraic properties. Furthermore, their connection to dynamics
provides new methods for the study of self-similar groups.

Iterated monodromy groups have been best understood for
postcriti\-cally-finite polynomials (in dynamics a map is said to
be \emph{postcritically-finite} if each of its critical points
has finite orbit). In particular, Nekrashevych gave a complete description of the IMG's of postcritically-finite polynomials in terms of  \emph{automata} generating them \cite{Nekra, Nekra_Poly}.

The study of algebraic properties of IMG's plays an important role in self-similar group theory as well as dynamics. For example, the \emph{growth properties} of iterated monodromy groups have been investigated in the last decade. Recall that a finitely generated group has either \emph{polynomial}, \emph{intermediate}, or \emph{exponential growth} depending on the volume growth of balls in the Cayley graph of the group, see Section \ref{subsec:growth-groups}. It has been
known for a while that the iterated monodromy group of the
polynomial $P_1(z)=z^2+i$ is of intermediate growth, see
\cite{BuxPerez}. Note that the \emph{Julia set} of $P_1$ is a
\emph{dendrite}, see Figure \ref{fig:Julia_bux}. On the other
hand, the iterated monodromy group of the \emph{Basilica map}
$P_2(z)=z^2-1$ is of exponential growth; this was proved (for an
isomorphic group) in \cite{GriZ_Basilica}.
Note that the closures
of the Fatou components of $P_2$ containing $0$ and $-1$
intersect, see Figure \ref{fig:Julia_basilica}. Having these two
examples in mind, various people have been trying to establish connections between dynamical properties of a map and algebraic properties of its iterated monodromy group. For instance, it can be shown that any postcritically-finite polynomial of ``Basilica type'' has iterated monodromy group of exponential growth, see Theorem~\ref{thm:poly-exp-growth}. At the same time, for the \emph{airplane polynomial}, that is, the unique polynomial of the form $P_3(z) = z^2 +c_{air}$, where $c_{air}\neq 0$ is real and satisfies $P_3^3(c_{air})=c_{air}$ ($c_{air}= -1.75488\dots$), the growth of $\img(P_3)$ is unknown. Note that two distinct bounded Fatou components of $P_3$ have disjoint closures, see Figure \ref{fig:Julia_airplane}.

\begin{figure}
  \centering
  \begin{subfigure}[b]{0.45\textwidth}
    \centering    
    \begin{overpic}
    [width=4cm,tics=20,
    ]{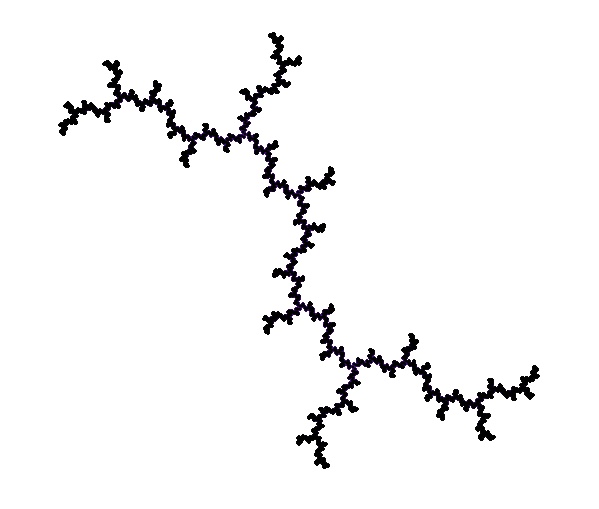}
    \put(41,43){\footnotesize $0$}
    \put(48,41){\footnotesize \blue{$\bullet$}}
    \put(52,78){\footnotesize $i$}
    \put(49,73){\footnotesize\blue{ $\bullet$}}
    \put(37,10){\footnotesize $-i$}
    \put(47,9){\footnotesize \blue{$\bullet$}}
    \put(0,78){\footnotesize $-1+i$} 
    \put(18,73){\footnotesize \blue{$\bullet$}}
  \end{overpic}
\caption{\footnotesize $P_1(z)=z^2+i$.}
 \label{fig:Julia_bux}
  \end{subfigure}
  \phantom{XXX}
  \begin{subfigure}[b]{0.45\textwidth}
    \centering
      \begin{overpic}
    [width=6cm, tics=20, 
    ]{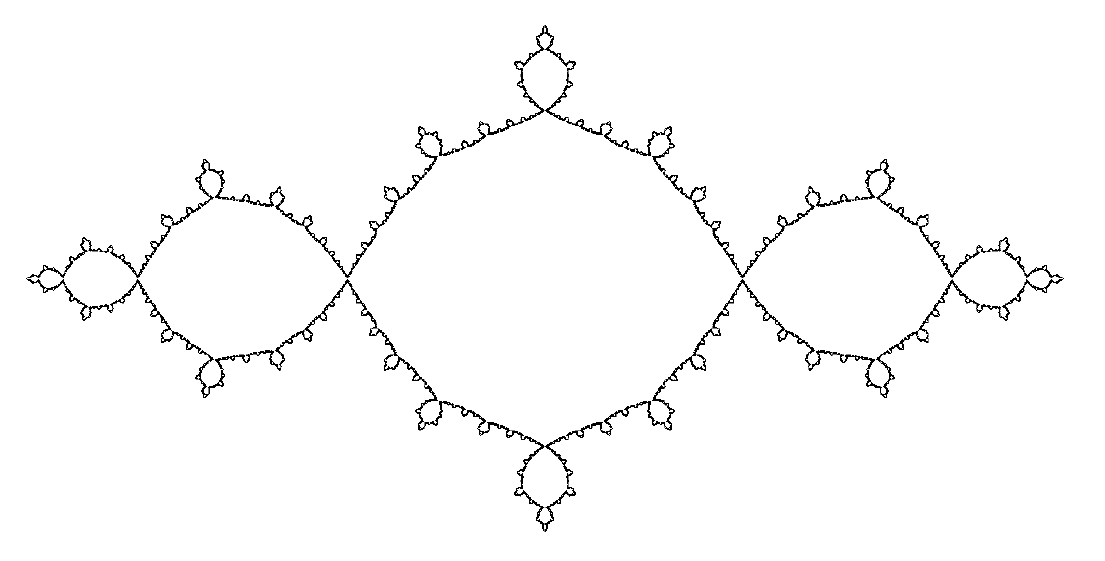} 
    \put(51.5,24){\footnotesize $0$}
    \put(48.3,24){\footnotesize \blue{$\bullet$}}
    \put(23,24){\footnotesize $-1$}
    \put(20,24){\footnotesize \blue{$\bullet$}}
  \end{overpic}
\caption{\footnotesize $P_2(z)=z^2-1$.}
\label{fig:Julia_basilica}
  \end{subfigure}
  \begin{subfigure}[b]{0.9\textwidth}
    \centering
      \begin{overpic}
    [width=10cm, tics=20, 
    ]{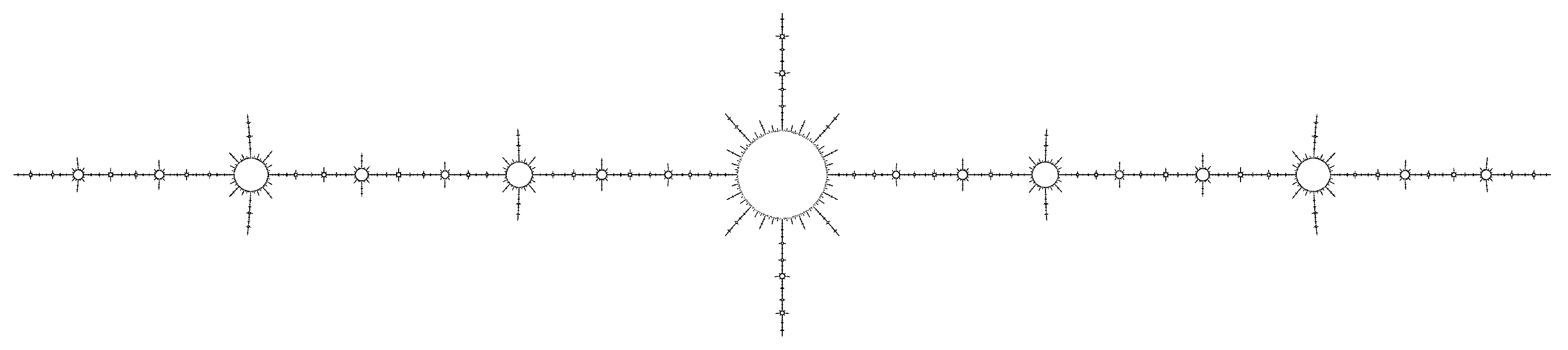} 
  \end{overpic}
\caption{\footnotesize $P_3(z)=z^2+c_{air}$.}
\label{fig:Julia_airplane}
  \end{subfigure}
  \caption{Julia sets of some quadratic polynomials.}
\end{figure}

For a postcritically-finite rational map $f$ that is
not a polynomial, the iterated monodromy group is much less
understood. With the exception of \emph{Latt\`{e}s maps} (which
have iterated monodromy group that is virtually $\Z^2$) and rational maps whose Julia set ``contains a copy'' of the Julia set of a polynomial (that is, they are \emph{renormalizable} and their IMG's contain an IMG of a polynomial in a natural way), the growth of the $\img$ of a rational map $f$ has previously been unknown.

The first result we show in this paper is the following.

\begin{theorem}
  \label{thm:f1_exp_growth}
  There exists a postcritically-finite rational map $f\colon
  \CDach \to \CDach$ with Julia set equal to the whole sphere
  $\CDach$ and iterated monodromy group of exponential growth. 
\end{theorem}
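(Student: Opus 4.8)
The plan is to reduce the statement to the polynomial case already established in Theorem~\ref{thm:poly-exp-growth}, by producing a single explicit postcritically-finite rational map $f$ whose Julia set is all of $\CDach$ and whose monodromy action secretly contains a Basilica-type dynamical piece. The reduction rests on three elementary facts about growth of finitely generated groups, which I would record first: exponential growth passes to finite-index subgroups, it passes \emph{up} from finitely generated subgroups (a finitely generated subgroup of a group of subexponential growth again has subexponential growth), and it passes up from quotients. In particular, it suffices either to exhibit inside $\img(f)$ a free subsemigroup of rank two, or to produce a surjection from $\img(f)$ onto a group of exponential growth. This example lies outside the previously understood cases — it is neither a Latt\`{e}s map nor covered by the renormalizable mechanism, since a polynomial Julia set is always a proper subset of the sphere — so the required substructure must be produced by an explicit computation.

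Concretely, I would first construct $f$ from combinatorial data. The most promising route is to realize $f$ as a mating, or as a Thurston map specified by a prescribed critical-point configuration and postcritical dynamics, for which Thurston's characterization guarantees a rational realization with no Thurston obstruction, and for which the expansion of the map on a spanning collection of tiles forces $J(f)=\CDach$. I would choose the combinatorics so that the first-return map to a suitable forward-invariant sub-collection of tiles reproduces the Basilica recursion $P_2(z)=z^2-1$. The second, and most technical, step is to compute the self-similar (wreath) recursion of $\img(f)$ explicitly: following Nekrashevych's machinery, I would build a forward-invariant graph joining the postcritical points, lift it under $f$, and read off from the lifted graph the action on the rooted tree of iterated preimages of the generators, one loop around each postcritical point, presenting them as a finite automaton.

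With the recursion in hand, I would finish by exhibiting the promised substructure. The cleanest option is to identify a forward-invariant subtree on which the restriction of $\img(f)$ surjects onto a group isomorphic to $\img(P_2)$, so that exponential growth follows from Theorem~\ref{thm:poly-exp-growth}; failing a clean surjection, I would instead locate two generators $a,b\in\img(f)$ whose sections, read off from the automaton, let one show by a ping-pong argument on the tree that all positive words in $a$ and $b$ act distinctly, hence generate a free subsemigroup. The hard part is the middle step: arranging combinatorics that simultaneously force $J(f)=\CDach$ and keep the postcritical dynamics transparent enough that the wreath recursion is computable and the Basilica piece is manifest. Balancing these constraints — expansion strong enough to fill the sphere, yet structure simple enough to analyze — and then ruling out, via careful control of the sections, any accidental relations that would collapse the free subsemigroup, is where the real difficulty resides.
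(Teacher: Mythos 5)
Your general reductions are fine (a rank-two free subsemigroup, or a surjection onto a group of exponential growth, suffices), but the core of your plan --- arranging the combinatorics so that a first-return map of $f$ ``reproduces the Basilica recursion'' and then invoking Theorem~\ref{thm:poly-exp-growth} --- is not merely unexecuted, it is obstructed. The mechanism behind Theorem~\ref{thm:poly-exp-growth} is the existence of two finite Fatou components with touching closures; the Basilica $z^2-1$ has a superattracting cycle and hence Fatou components with nonempty interior. A rational map with $\JC_f=\CDach$ has empty Fatou set, so it admits no polynomial-like restriction hybrid equivalent to the Basilica (the interior of the small filled Julia set would land in the Fatou set of $f$), and more generally no renormalization piece with a periodic critical point. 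This is precisely why the paper singles out maps with $\JC=\CDach$ as lying outside the renormalizable regime: the ``Basilica piece'' you want to import cannot exist there, and no surjection onto $\img(z^2-1)$ will drop out of a first-return map. Your fallback (ping-pong on the tree with two elements read off from the automaton) is the right kind of claim, but as written it names no candidate elements and gives no reason why accidental relations would not collapse the semigroup; note also Theorem~\ref{thm:no-free-groups}, which forbids free subgroups of rank $2$, so the argument must genuinely stay at the semigroup level.

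What the paper actually does is quite different. It constructs an explicit map $f_1$ with $\post(f_1)=\{-1,1,\infty\}$ and no periodic critical points (which forces $\JC=\CDach$), and exploits the $f_1$-invariant $0$-edge $[1,\infty]$: every $b$-flower centred on $(1,\infty)$ has degree $8$ and every $a$-flower degree $2$, so $ab^4$ acts by shifting white $n$-tiles along $[1,\infty]$, and $ab^4$, $ab^{12}$, $ab^{20}$ all induce the \emph{same} shift there. The free semigroup then comes from the critical point $c_0$ with $\deg(f_1^n,c_0)=3$: since $8$ is not a multiple of $3$, the powers $b^4,b^{12},b^{20}$ move a fixed tile of $W^n(c_0)$ to three distinct tiles, after which the remaining subwords shift along three distinct preimage arcs of $[1,\infty]$ and can never recombine. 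No polynomial sub-dynamics is used anywhere; the growth is extracted directly from the tiling combinatorics (with an alternative wreath-recursion computation in Appendix~A.2).
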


We point out that this is the first non-Latt\`{e}s example of a
rational map whose Julia set is the whole Riemann sphere where the
growth is known.

The key ingredient of this paper is to use geometric
\emph{tilings} of 
the sphere $\CDach$ associated with a rational (or more
generally, branched covering) map $f$. We use them to describe
the iterated monodromy action in a simple combinatorial way (via
``rotation of tiles in flowers''). So these tilings may be viewed as
representations of the \emph{Schreier graphs} of the action of
$\img(f)$ (on the levels of the dynamical preimage tree). To
prove exponential growth of $\img(f)$, we find a free semigroup
inside it using the properties of the constructed tilings. We
also provide another proof, similar to the one in \cite{GriZ_Basilica} for the Basilica group, which uses computations with
the \emph{wreath recursions} associated with the $\img$,
see Appendix A.2.



Next, we consider some rational maps whose Julia set is a
Sierpi\'{n}ski carpet (that is, homeomorphic to the standard
Sierpi\'{n}ski carpet). In particular, this means that distinct
Fatou components have disjoint closures. \hide{In this way those rational maps are
similar to the airplane polynomial.}

\begin{theorem}
  \label{thm:fsierp_exp_growth}
  There exists a postcritically-finite rational map with Julia
  set equal to a Sierpi\'{n}ski carpet and iterated monodromy group of
  exponential growth. 
\end{theorem}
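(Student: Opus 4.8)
The plan is to follow the same route as for Theorem~\ref{thm:f1_exp_growth}: exhibit a concrete postcritically-finite rational map $f$ whose Julia set is a Sierpi\'{n}ski carpet, encode the action of $\img(f)$ by the geometric tiling of $\CDach$ induced by $f$, and then produce a free sub-semigroup of rank two inside $\img(f)$. The existence of such a free semigroup forces exponential growth, so this is all that is needed.

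First I would construct the map combinatorially. Rather than hunting in parameter space, I would prescribe a finite subdivision rule on $\CDach$ with a chosen postcritical set $\post(f)$ whose tiles realize a Thurston map $f$ for which every periodic Fatou component is a Jordan domain and the closures of distinct Fatou components are pairwise \emph{disjoint}. Assuming $f$ is hyperbolic and expanding, the topological characterization of the carpet (the Julia set is a compact, connected, locally connected, nowhere dense set whose complementary components are bounded by disjoint Jordan curves) then guarantees that the Julia set is a Sierpi\'{n}ski carpet. I would finally check that the combinatorial map carries no Thurston obstruction, so that $f$ is realized by an honest rational map. Crucially, the disjointness of the Fatou closures --- the feature making this case resemble the airplane rather than the Basilica --- is designed into the subdivision rule from the outset.

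Next I would read off the self-similar structure from the tiling. Each $p\in\post(f)$ is the center of a flower of tiles, and the standard generators $g_p$ of $\img(f)$, the monodromy of small loops about the points of $\post(f)$, act on the preimage tree by rotating tiles within flowers. From the combinatorics of how the tiles at level $n+1$ nest inside those at level $n$ I would write down the explicit wreath recursion for the $g_p$, obtaining a completely combinatorial description of the $\img$-action (equivalently, of the Schreier graphs).

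The crux is then to isolate two elements $a,b\in\img(f)$, suitable words in the $g_p$, whose positive powers act on the tree independently enough that the natural map from the free semigroup on $\{a,b\}$ into $\img(f)$ is injective. Guided by the tiling, I would choose $a$ and $b$ to be rotations in two flowers placed so that their sections along a distinguished path in the tree reproduce $a$ and $b$ themselves, a self-replicating configuration; an induction on word length then shows that distinct positive words move a fixed vertex to distinct vertices and so are distinct group elements. The main obstacle is exactly the disjointness of the Fatou closures: in the whole-sphere case of Theorem~\ref{thm:f1_exp_growth} the flowers overlap and a rich adjacency makes the generators of the free semigroup transparent, whereas here the flowers are separated by Fatou components and the two rotations interact only weakly. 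The hard part will be to tune the subdivision rule and the pair $a,b$ so that, despite this separation, the wreath recursion still displays a self-replicating free pattern with no coincidences collapsing distinct words; verifying this injectivity against the explicit recursion is where the real effort goes.
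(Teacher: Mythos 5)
Your high-level strategy --- build the map combinatorially, read the $\img$-action off the induced tiling, and exhibit a free sub-semigroup --- is indeed the paper's, but two of your specific choices point away from the configuration that makes the argument work, and the step you defer (``tune the subdivision rule and the pair $a,b$ so that the recursion displays a self-replicating free pattern'') is precisely the content of the proof, which you do not supply. The most serious issue is your assumption that $f$ is hyperbolic. The paper's example is deliberately \emph{not} hyperbolic: exponential growth is obtained from the general criterion of Theorem~\ref{thm:exp_growth_gen}, whose key hypotheses require an $f$-invariant $0$-edge $E=[p,q]$ with $\deg(f|E)\geq 2$ such that all interior $n$-vertices of type $p$ (resp.\ $q$) on $E$ have the same flower degree $k_p$ (resp.\ $k_q$). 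The interior $1$-vertices of $E$ are forced to be critical points lying on the postcritical curve, hence in the Julia set, so the map cannot be hyperbolic. The paper explicitly notes that for the hyperbolic Ha\"{i}ssinsky--Pilgrim carpets (an even number of subsquares per side) condition \eqref{eq:deggv} fails, Theorem~\ref{thm:exp_growth_gen} does not apply, and the growth of the $\img$ is unknown; the example in the paper uses an odd subdivision (with two flaps added to a $3\times 3$ Latt\`es pillow) precisely to stay out of that regime.

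Second, the difficulty you single out --- that the disjointness of Fatou closures separates the flowers so the two rotations ``interact only weakly'' --- is not where the argument lives. The carpet property of the Julia set is imported wholesale from Ha\"{i}ssinsky--Pilgrim and is logically independent of the growth argument, which takes place entirely on the invariant arc $E=[1,\infty]$: the element $a^{k_q/2}b^{k_p/2}$ shifts white tiles along $E$ and hence has infinite order, and freeness comes from one extra critical point $c$ with $f^k(c)=p$ and $\deg(f^k,c)$ not dividing $k_p$, which splits the orbit of a fixed tile among several preimage arcs of $E$ so that distinct positive words land on distinct tiles. Without identifying this mechanism (or an equivalent one), the injectivity of the semigroup map is asserted rather than proved. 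A minor point in your favor: the obstruction check you propose is unnecessary in the paper's setup, since $f$ is defined as a holomorphic map between uniformized polyhedral surfaces and is therefore rational by construction.
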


Again, this is the first example of such a rational map where the
growth of the iterated monodromy group is known.

The methods that we develop here use rather the ``combinatorial information'' about the maps than their holomorphic nature. In particular, our methods apply to some \emph{obstructed Thurston maps}, that is, postcritically-finite branched covering maps $f\colon S^2\to S^2$ that are not ``Thurston equivalent'' to a rational map, see Section \ref{sec:thurston-maps} for the definitions.

\begin{theorem}
  \label{thm:f_obtructed_exp_growth}
  There exists an obstructed Thurston map $f\colon S^2\to S^2$
  with iterated monodromy group of exponential growth. 
\end{theorem}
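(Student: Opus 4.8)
The plan is to exploit the fact that the entire machinery developed in the preceding sections is combinatorial: the tiling of $S^2$ associated with a branched covering, the description of the iterated monodromy action by rotation of tiles in flowers, and the resulting wreath recursion all depend only on the combinatorial (equivalently, the orientation-preserving topological) data of the map, and never on the assumption that $f$ is holomorphic. In particular, the free-subsemigroup argument that yields exponential growth in the proofs of Theorems~\ref{thm:f1_exp_growth} and \ref{thm:fsierp_exp_growth} never uses rationality. Therefore it suffices to produce a single postcritically-finite branched covering $f\colon S^2\to S^2$ that (i) carries a Thurston obstruction, so that $f$ is not Thurston equivalent to a rational map, and (ii) has the same flower/tiling structure that was used to extract a free subsemigroup.

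First I would construct such an $f$ by a combinatorial modification of one of the maps already analyzed. Two natural mechanisms are available. One may either prescribe $f$ directly by a subdivision rule on a tiling of $S^2$ --- choosing the rule so that some ring of tiles around an invariant curve fails to subdivide, which makes that curve non-expanding --- or one may take one of the rational maps $g$ from Theorem~\ref{thm:f1_exp_growth} or \ref{thm:fsierp_exp_growth} and post-compose it with a power of a Dehn twist $T_\gamma$ along a carefully chosen simple closed curve $\gamma$ in $S^2\setminus\post(g)$, setting $f=T_\gamma^{\,k}\circ g$. In either case the postcritical set is unchanged, so $f$ is again postcritically finite with the same orbifold.

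Next I would verify that $f$ is obstructed. The cleanest certificate is a Levy cycle, or more generally a multicurve $\Gamma$ that is $f$-invariant up to isotopy and whose associated Thurston linear map has spectral radius at least $1$; by Thurston's characterization this forces $f$ to be non-rational. In the subdivision-rule picture the obstructing multicurve is read off directly as a family of curves crossing only the non-subdividing tiles; in the Dehn-twist picture one shows, for $k$ sufficiently large, that $\gamma$ (or an iterated preimage of it) becomes isotopic to a degree-one preimage component, producing a Levy cycle.

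Finally I would recompute $\img(f)$ from the tiling of $f$ and its flower rotations and exhibit two elements generating a free subsemigroup, exactly as in the unobstructed cases, concluding exponential growth. The main obstacle is that the modification which creates the obstruction also alters the wreath recursion, so one must satisfy the two requirements simultaneously: the curve $\gamma$ (or the non-subdividing annulus) has to be placed so as to force the obstruction while keeping it disjoint from the flowers that carry the free-semigroup generators, so that the ping-pong / positivity estimate producing the free semigroup is preserved. Checking that this separation can actually be achieved, and that the resulting two generators still satisfy the free-semigroup relations, is the delicate point of the argument.
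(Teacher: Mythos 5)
Your first mechanism is, in strategy, exactly what the paper does: Section~\ref{sec:family-obstr-maps} modifies the Sierpi\'{n}ski-carpet construction of Section~\ref{sec:sierp-carp-rati} by attaching only \emph{one} flap to an $n\times n$ Latt\`{e}s pillow instead of two. The resulting map keeps the $f$-invariant $0$-edge $[1,\infty]$ with the same restricted ramification portrait and the same degree-$3$ critical point mapping into it, so conditions \eqref{eq:E_ginv}--\eqref{eq:prep_rel_prime} of the general criterion (Theorem~\ref{thm:exp_growth_gen}) hold verbatim and exponential growth follows; meanwhile the broken symmetry produces a curve $\gamma$ separating $\{1,\infty\}$ from $\{-1,0\}$ with three non-peripheral preimages and $\lambda_f(\gamma)=1$, hence a Thurston obstruction. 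So you correctly identified that the growth argument is purely combinatorial and survives the loss of rationality, and that one should certify obstructedness via the Thurston linear map.

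The gap is that you never actually produce the example, and for an existence theorem that is the whole proof. You explicitly defer ``the delicate point'' --- arranging the obstruction and the free-semigroup structure to coexist --- but this compatibility is precisely the substance of the theorem; the paper discharges it by a concrete choice in which the obstructing annulus (around the single flap, near the vertices $-1$ and $0$) and the invariant edge carrying the shift dynamics (the edge $[1,\infty]$ together with the flap's degree-$3$ attachment point) are verified to coexist in one explicit subdivision rule. Your second mechanism, post-composing a rational map $g$ with a power of a Dehn twist, is not what the paper does and is genuinely riskier than you indicate: $T_\gamma^k\circ g$ lies in a different Thurston class, its wreath recursion is twisted by the image of $T_\gamma^k$ in the mapping class group, and neither the existence of a Levy cycle for large $k$ nor the survival of the free subsemigroup is automatic --- each would require a separate argument that the proposal does not supply. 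As written, the proposal is a correct plan whose key verification steps are all left open.
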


Since it was shown that $P_1(z)=z^2 + i$ has iterated monodromy
group of intermediate growth, it was conjectured that polynomials
that are ``similar to'' $P_1$ have $\img$ of intermediate
growth. However, the precise meaning of ``similar to'' is not
clear, and indeed has changed over time. As already noted, the Julia set
$\mathcal{J}$ of $P_1$ is a dendrite. 
Furthermore, every
finite (that is, distinct from $\infty$) point in the postcritical set $\post(P_1)=\{i, -i, -1+i,\infty\}$ is a leaf of $\mathcal{J}$, that is, does not
separate $\mathcal{J}$, see Figure \ref{fig:Julia_bux}. 
 Finally, $P_1$ is \emph{not
  renormalizable}. Roughly speaking, this means that we cannot
extract a simpler polynomial from any iterate $P_1^n$, see
Section~\ref{sec:p-not-renorm} for details. The following
conjecture is quite natural and has been studied by the community for quite some time.

\begin{conj*}\label{conj:Nekrashevych}
Let $P$ be a postcritically-finite non-renormalizable (quadratic) polynomial, such that its Julia set $\JC_P$ is a dendrite. Then $\img(P)$ is of intermediate growth. 
\end{conj*}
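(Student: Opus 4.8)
The plan is to establish intermediate growth by proving the two matching bounds separately: a superpolynomial lower bound on the growth of $\img(P)$, and a subexponential upper bound. The group $\img(P)$ is an infinite, finitely generated, residually finite self-similar group acting on the binary rooted tree, generated by small loops around the points of the finite postcritical set $\post(P)$, and encoded by an explicit wreath recursion read off from the kneading data of the critical orbit as in \cite{Nekra}. Since $\JC_P$ is a dendrite and $P$ is postcritically finite, the critical point $0$ is strictly preperiodic (a Misiurewicz parameter), so the kneading sequence is strictly preperiodic and $\post(P)\subset\JC_P$; non-renormalizability imposes a further combinatorial primitivity on this sequence, which will be the crucial hypothesis.

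For the lower bound I would argue that $\img(P)$ is not virtually nilpotent, whence superpolynomial growth follows from Gromov's theorem. A virtually nilpotent self-similar group acting on the binary tree would be virtually abelian of small Hirsch length, and this can be excluded by inspecting the abelianized wreath recursion together with the fact that these IMGs are (weakly) branch, so that deep tree sections reproduce the whole group. I expect this half to be the routine one, following the pattern already known for the model case $z^2+i$ treated in \cite{BuxPerez}.

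The heart of the argument is the subexponential upper bound, obtained by generalizing the contraction method used for $z^2+i$. The idea is to construct a weighted word-length $\ell$ on the generating set and to prove a contraction inequality, summed over the two level-one sections, of the form $\ell(g|_0)+\ell(g|_1)\le \eta\,\ell(g)+C$ with $\eta<2$; iterating over the levels of the tree then bounds the number of distinct level-$n$ sections, and hence the ball of radius $R$ in the Cayley graph, by $\exp(R^{\beta})$ with $\beta<1$. Non-renormalizability is exactly what should make such a coefficient exist: a renormalizable dendrite would contain a ``Basilica-type'' sub-recursion, which by Theorem~\ref{thm:poly-exp-growth} forces exponential growth and destroys any contraction, whereas the primitivity of the kneading sequence in the non-renormalizable case should prevent the loops around distinct postcritical points from accumulating length without cancellation.

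The main obstacle is producing this contraction estimate \emph{uniformly} across the whole class. The growth type is extremely sensitive to the fine combinatorics of the kneading sequence, and at present no single length function is known to contract for all non-renormalizable dendrites; the only verified intermediate-growth example required an ad hoc weighting and a delicate analysis of the cancellations in its wreath recursion. The genuine content is to convert the qualitative dichotomy ``renormalizable yields Basilica-type exponential behaviour, non-renormalizable yields contraction'' into a quantitative, level-uniform bound on $\eta$. I expect this to demand a new combinatorial invariant of the Hubbard tree measuring the obstruction to renormalization, coupled with an induction tracking how loops around different postcritical points interact under the recursion — which is precisely why the statement is still a conjecture and not a theorem.
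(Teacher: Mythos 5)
You cannot be compared against ``the paper's own proof'' here, because the paper has none: the statement you were given is exactly the open conjecture of Section~\ref{sec:introduction}, and the paper's contribution is not to prove it but to \emph{refute} its general (non-quadratic) form. Theorem~\ref{thm:P_not_renorm_exp_growth} exhibits a degree-$8$ postcritically-finite polynomial $P$ that is non-renormalizable (Theorem~\ref{thm:non_ren_3post}), has dendrite Julia set whose finite postcritical points are leaves, and yet has $\img(P)$ of \emph{exponential} growth, via an explicit free semigroup produced by the tile-and-flower criterion of Theorem~\ref{thm:exp_growth_gen}. Your proposal is, by your own admission, a strategy outline rather than a proof: the subexponential upper bound rests on a contraction inequality $\ell(g|_0)+\ell(g|_1)\le\eta\,\ell(g)+C$ with $\eta<2$ for a weighted length that you neither construct nor show to exist, and no quantitative mechanism is given by which non-renormalizability would force such an $\eta$. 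That missing step is the entire content of the problem, so the gap is genuine and fatal to the proposal as a proof.

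Beyond incompleteness, the guiding heuristic of your sketch is contradicted by the paper. The dichotomy ``renormalizable yields Basilica-type exponential behaviour, non-renormalizable yields contraction'' fails: the paper's polynomial $P$ is non-renormalizable and still satisfies conditions \eqref{eq:E_ginv}--\eqref{eq:prep_rel_prime}, which yield elements $a^{k_q/2}b^{k_p/2}$ generating a free semigroup. Consequently, any correct proof of the quadratic conjecture must use $\deg P=2$ in an essential way (for instance, the even flower degrees $k_p,k_q$ and the auxiliary critical point of condition \eqref{eq:prep_rel_prime} cannot be arranged in degree $2$), whereas in your argument quadraticity enters only through the parenthetical setup. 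A smaller but real inaccuracy: you invoke Theorem~\ref{thm:poly-exp-growth} to claim a renormalizable dendrite ``contains a Basilica-type sub-recursion,'' but that theorem concerns two bounded Fatou components with intersecting closures, and a polynomial with dendrite Julia set has no bounded Fatou components at all; the known renormalizable dendrite examples of exponential growth arise instead from tuning, with $\img$ of the tuned map embedded in $\img(P)$, as the paper notes with reference to Nekrashevych. So the correct verdict is: the statement remains a conjecture (plausibly only for quadratics), your proposal does not prove it, and its central heuristic is disproved by the very paper it accompanies.
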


This conjecture is supported by quite a few examples of polynomials:  $P(z)=z^2+i$ \cite{BuxPerez}, $P(z)=z^3(-\frac{3}{2}+i\frac{\sqrt{3}}{2})+1$ \cite{FabrykowskiGuptaGrowth}, the quadratic polynomials with the kneading sequences $11(0)^\omega$ and $0(011)^\omega$ \cite{DoughertyEtAl}. The hypothesis of non-renormalizability rules out the counterexamples that arise from \emph{tuning}, a reverse operation to renormalization \cite{McM_renorm}. The tuning operation allows to construct examples of 
polynomials with dendrite Julia set whose $\img$'s are of exponential growth, see \cite[Section 5.5]{Nekra_Symb}. However, those maps will be renormalizable.

In this paper we show the following result. 

\begin{theorem}
  \label{thm:P_not_renorm_exp_growth}
  There exists a postcritically-finite polynomial $P$ with the
  following properties.
 \begin{enumerate}[label=(\arabic*),font=\normalfont]
  \item\label{cond:J_dendrite} The Julia set $\mathcal{J}$ of $P$ is a dendrite.
  \item\label{cond:post_are_leaves} Every finite postcritical point of $P$ is a leaf of
    $\JC$.
  \item\label{cond:non_renorm} $P$ is not renormalizable.
  \item\label{cond:exp_growth} The iterated monodromy group of $P$ is of exponential
    growth. 
 \end{enumerate}
\end{theorem}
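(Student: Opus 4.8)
The plan is to produce an \emph{explicit} postcritically-finite polynomial $P$, to verify the combinatorial–topological conditions \ref{cond:J_dendrite}--\ref{cond:non_renorm} directly, and then to deduce \ref{cond:exp_growth} from the tiling/flower description of the $\img$-action developed in the earlier sections by exhibiting a free subsemigroup of rank two. Since the conjecture is stated (and confirmed in all previously analyzed cases) for \emph{quadratic} polynomials — note the parenthetical ``(quadratic)'' — the natural place to look for a map satisfying all four conditions is in higher degree $d\ge 3$: I would search for a $P$ whose combinatorics leaves enough branching to host a free semigroup while still forcing every finite postcritical point to be an endpoint of $\JC$.

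First I would pin down the map by prescribing its Hubbard tree, equivalently its kneading/itinerary data. To secure \ref{cond:J_dendrite} I would require every critical point of $P$ to be strictly preperiodic; then $P$ has no attracting or parabolic cycles, hence no bounded Fatou components, so the filled Julia set equals $\JC$, which is a dendrite. To secure \ref{cond:post_are_leaves} I would arrange the itineraries (the landing pattern of external rays, or the local degree data encoded in the Hubbard tree) so that each finite point of $\post(P)$ has valence one in $\JC$, i.e. is an endpoint of the dendrite and therefore does not separate it. Condition \ref{cond:non_renorm} is the combinatorial heart of the construction: I would check that no iterate $P^n$ admits a polynomial-like restriction to a proper sub-dendrite, i.e. that there is no $P^n$-invariant subtree on which $P^n$ acts as a degree $\ge 2$ branched self-cover. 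For a postcritically-finite map this is an admissibility condition on the kneading data — the absence of a periodic ``renormalization pattern'' — and can be read off directly from the chosen combinatorics.

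With $P$ fixed, I would invoke the tiling machinery: the sphere carries a sequence of tilings on which $P$ acts, the $\img(P)$-action on the preimage tree is realized as the rotation of tiles in the flowers around the marked (postcritical) points, and from this one reads off the wreath recursion generating $\img(P)$. The final step is to locate two elements $a,b\in\img(P)$ whose positive words are all distinct as tree automorphisms. Following the strategy of the earlier theorems, I would pick $a,b$ so that along some infinite ray in the tree their sections reproduce $a$ and $b$ themselves while acting differently on a fixed level; a self-replication / ping-pong argument across the levels then shows that distinct positive words in $a,b$ induce distinct automorphisms, so the semigroup $\langle a,b\rangle^{+}$ is free. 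A finitely generated group containing a free subsemigroup of rank two has exponential growth, which gives \ref{cond:exp_growth}.

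I expect the main obstacle to be the \emph{simultaneous} fulfilment of all four conditions. Conditions \ref{cond:J_dendrite}--\ref{cond:non_renorm} push the combinatorics toward exactly the regime in which every prior example had intermediate growth, whereas \ref{cond:exp_growth} demands enough branching in the dynamics to accommodate a free semigroup; balancing the two is what forces the degree up and makes the choice of kneading data delicate. Concretely, the two places where a map-specific computation seems unavoidable are the verification of non-renormalizability \ref{cond:non_renorm} for the chosen itinerary, and the check that the two selected generators genuinely satisfy the self-replicating recursion needed for freeness of $\langle a,b\rangle^{+}$.
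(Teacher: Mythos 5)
Your overall strategy matches the paper's: an explicit postcritically-finite polynomial of degree $\ge 3$ with all critical points strictly preperiodic (giving a dendrite Julia set), postcritical points arranged to be endpoints, and a free subsemigroup of rank two extracted from the tiling/flower description of the $\img$-action. The paper's example is $P(z)=\tfrac{2}{27}(z^2+3)^3(z^2-1)+1$, of degree $8$, with $\post(P)=\{-1,1,\infty\}$, Hubbard tree $[-1,1]$, and exponential growth obtained by applying the general criterion of Section~\ref{sec:crit-expon-growth} to the invariant edge $E=[-1,1]$ together with the critical point $c=\sqrt{3}i$ of local degree $3$. Your sketch of parts \ref{cond:J_dendrite}, \ref{cond:post_are_leaves}, and \ref{cond:exp_growth} is consistent with this, although as a blind proposal it never actually produces the polynomial, so the existence claim is not yet established.

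The genuine gap is in part \ref{cond:non_renorm}. You recast non-renormalizability as the absence of a ``$P^n$-invariant subtree on which $P^n$ acts as a degree $\ge 2$ branched self-cover'' and assert this can be ``read off directly from the chosen combinatorics.'' That is not the definition: a renormalization is a polynomial-like restriction $(P^n,U,V)$ on Jordan domains with connected filled Julia set and degree $2\le\delta<d^n$, and the equivalence with your subtree formulation is neither stated precisely nor proved; ruling out \emph{all} such restrictions for \emph{all} iterates is exactly the hard point, and ``read off from the kneading data'' does not discharge it. The paper's key idea, which your proposal misses, is a clean general statement (Theorem~\ref{thm:non_ren_3post}): any polynomial with exactly \emph{two} finite postcritical points and no finite periodic critical points is non-renormalizable. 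The proof is a Riemann--Hurwitz case analysis on how many finite postcritical points the putative range $V$ contains: zero forces degree $1$; both forces $U=P^{-n}(V)$ and hence full degree $d^n$; exactly one forces the filled Julia set $\KC(P^n|U)$ to be disconnected. This is why the paper shrinks the finite postcritical set to $\{-1,1\}$ — it makes \ref{cond:non_renorm} automatic — and it is the structural mechanism your plan would need to find (or replace with an equally rigorous argument) before the proof is complete.
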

 
Theorem \ref{thm:P_not_renorm_exp_growth} and its proof show that
the conjecture stated above is not valid for all polynomials,
namely for the ones of sufficiently high degree. However it may
be still true for quadratic polynomials. 
This shows that the question when the $\img$ of a polynomial is
of intermediate growth is even more subtle than previously
thought.

 
The maps in Theorems \ref{thm:f1_exp_growth}-\ref{thm:P_not_renorm_exp_growth} are explicitly constructed in a
combinatorial fashion. The proof of exponential growth relies
strongly on this combinatorial description and is essentially the
same for all considered maps. In each case it is easy to
generalize the construction to obtain infinite families of maps
with $\img$'s of exponential growth. 

Based on our examples, we provide a quite general criterion for
exponential growth of the $\img$'s of Thurston maps, see Theorem \ref{thm:exp_growth_gen}. This sufficient condition is
not the most general that we can obtain, but rather was
formulated to be easily applicable to various examples.

The paper is organized as follows. In Section \ref{sec:background}
we review some standard material about Thurston maps, complex dynamics, growth of groups, and
iterated monodromy groups. This section also contains an overview of the main properties of $\img$'s that are relevant for our paper, in particular, in the context of growth of groups. In Section~\ref{sec:constr-f} the map
$f_1$ that serves as the example in
Theorem~\ref{thm:f1_exp_growth} is constructed. 
In Section~\ref{sec:tiles-flow-iter} we review the cell
decompositions associated with $f_1$ and describe the iterated monodromy action in terms
of them. In Section~\ref{sec:exponential-growth-r} we show exponential
growth of $\img(f_1)$, that is, prove
Theorem~\ref{thm:f1_exp_growth}. In
Section~\ref{sec:crit-expon-growth} we extract from the proof of
Theorem~\ref{thm:f1_exp_growth} a general criterion for
exponential growth of $\img$'s in our setting. In Section~\ref{sec:sierp-carp-rati} we consider a
family of postcritically-finite rational maps with Sierpi\'{n}ski
carpet Julia set, in particular, we prove Theorem~\ref{thm:fsierp_exp_growth}. In
Section~\ref{sec:family-obstr-maps} we consider an obstructed
Thurston map that proves
Theorem~\ref{thm:f_obtructed_exp_growth}. In
Section~\ref{sec:non-renorm-polyn} we construct the polynomial $P$ from
Theorem~\ref{thm:P_not_renorm_exp_growth} and prove exponential
growth of its iterated monodromy group. In
Section~\ref{sec:p-not-renorm} we review renormalization theory and
prove that $P$ is not renormalizable. In Appendix A.1 we review
some standard material about actions on rooted trees and
self-similar groups. In Appendix~A.2 we give an alternative
proof of exponential growth of $\img(f_1)$, show that it is a
regular branch group, and conclude with some further properties.

\longhide{
The organization of the paper is as follows. In the next section
we review some standard material about Thurston maps, complex dynamics, growth of groups, and the
iterated monodromy group. This section also contains an overview of the main properties of $\img$'s that are relevant for our paper, in particular, in the context of growth of groups. In Section~\ref{sec:constr-f} the map
$f_1$ that serves as the example in
Theorem~\ref{thm:f1_exp_growth} is constructed. 
In Section~\ref{sec:tiles-flow-iter} we review the cell
decompositions that are defined in terms of $f_1$. This may be
viewed as iterating the combinatorial description of $f_1$ to
yield a combinatorial description of any iterate $f_1^n$ for any
$n\in \N$. We then describe the iterated monodromy action in terms
of this description. In particular, the generators of $\img(f_1)$
act by ``rotation around flowers''. So the tilings induced by
$f_1$ may be viewed as representations of the Schreier graph
acting on the $n$-th level of the dynamical preimage tree. \footnote{M: I am not sure that the paragraph about the organization of paper is the right place to say this. Maybe, it makes more sense to add a paragraph before promoting the combinatorial description of IMGs.} 
In Section~\ref{sec:exponential-growth-r} we show exponential
growth of $\img(f_1)$, that is, prove
Theorem~\ref{thm:f1_exp_growth}. In
Section~\ref{sec:crit-expon-growth} we extract from the proof of
Theorem~\ref{thm:f1_exp_growth} a general criterion for
exponential growth of $\img$'s in our setting. This condition is
not the most general, but rather was formulated to be easily
applicable. In Section~\ref{sec:sierp-carp-rati} we consider a
family of postcritically-finite rational maps with Sierpi\'{n}ski
carpet Julia, in particular, we prove Theorem~\ref{thm:fsierp_exp_growth}. In
Section~\ref{sec:family-obstr-maps} we consider obstructed
Thurston map, meaning we prove
Theorem~\ref{thm:f_obtructed_exp_growth}. In
Section~\ref{sec:p-not-renorm} we define the polynomial $P$ from
Theorem~\ref{thm:P_not_renorm_exp_growth} and prove exponential
growth of its iterated monodromy group. In
Section~\ref{sec:p-not-renorm} we review renormalization theory and
prove that $P$ is not renormalizable. 
}

\subsection{Notation}
\label{subsec:notation}
We denote by $\N$ the set of positive integers, while $\N_0$
denotes the set of non-negative integers, that is, $\N_0= \N \cup
\{0\}$. 

Let $X$ be a topological space and $U\subset X$. We denote by $\overline{U}$, $\inter(U)$, and $\partial U$ the topological closure, the interior, and the boundary of the set $U$, respectively.

The $n$-th iterate of a map $f\colon S\to S$ on a set
$S$ is denoted by $f^n$ for
$n\in \N$. We set $f^0\coloneqq \id_S$. Suppose a subset $U\subset S$ is given. We denote by
$f^{-n}(U)$ the preimage of $U$ under $f^n$, that is, $f^{-n}(U)
=\{x\in S : f^n(x)\in U\}$. For simplicity, we denote $f^{-n}(y) \coloneqq
f^{-n}(\{y\})$ for $y\in S$. Also we denote the restriction of $f$ to $U$ by
$f|U$. 

The identity element of a group is denoted by $1$. The notation ``$H<G$'' means that $H$ is a subgroup of $G$, as usual.

We consider right group actions. So if a group $G$ acts on a set $X$, then the image of $x\in X$ under the action of an element $g\in G$ is denoted by $x^g$, and in a product $g_1 g_2$ the element $g_1$ acts first, that is, $x^{g_1g_2}=(x^{g_1})^{g_2}$. We therefore write $[g_1,g_2]=g_1^{-1}g_2^{-1}g_1g_2$ and  $g_1^{g_2} = g_2^{-1}g_1g_2$.

In a group $G$, $\ord(g)$ denotes the order (or period) of a group element $g\in G$, that is, the smallest positive integer $n$ such that $g^n=1$. If no such $n$ exists, $g$ is said to have infinite order and we set $\ord(g)=\infty$.

\section{Background}
\label{sec:background}

\subsection{Thurston maps}
\label{sec:thurston-maps}

We provide a brief overview here, but refer the reader to
\cite[Chapter 2]{THEbook} for details.  Let $f\colon S^2\to S^2$ be a
\emph{branched covering map} of the topological $2$-sphere
$S^2$. That is, $f$ is a continuous and surjective map, that we
can write locally around each point $p\in\Sp$ as $z\mapsto z^d$
for some $d\in \N$ (depending on $p$) in orientation-preserving
homeomorphic coordinates in domain and target. The integer
$d \geq 1$ is uniquely determined by $f$ and $p$, and called the
\emph{local degree} of the map $f$ at $p$, denoted by
$\deg(f,p)$.

A point $c\in\Sp$ with $\deg(f,c) \geq 2$ is called a \emph{critical point} of $f$. The image of a critical point is called a \emph{critical value}. The set of all critical points of $f$ is finite and denoted by $\crit(f)$. The union $\post(f)=\bigcup_{n=1}^{\infty} {f^{n}(\crit(f))}$ of the orbits of critical points is called the \emph{postcritical set} of $f$. The map $f$ is said to be \emph{postcritically-finite} if its postcritical set $\post(f)$ is finite,  in other words,  the orbit of every critical point of $f$ is finite.

\begin{definition}
  \label{def:Thurston-map} 
  A \emph{Thurston map} is an orientation-preserving,
  post\-criti\-cally-finite, branched covering
  $f\colon S^2 \to S^2$ of topological degree $d\geq 2$.
\end{definition}

Natural examples are given by postcritically-finite
rational maps on the Riemann sphere $\CDach$ and 
postcritically-finite polynomial maps on the complex plane
$\C$. In this paper we consider Thurston maps defined by a
subdivision rule on the sphere $\Sp$.  See for example the
construction of the map $f_1$ in Section \ref{sec:constr-f}; in
greater generality this may be found in
\cite{CFKP} and \cite[Chapter~12]{THEbook}.

The \emph{ramification function}
of a Thurston map $f:\Sp\to\Sp$ is a function
$\alpha_{f} \colon \Sp \to \N\cup\{\infty\}$ such that
$\alpha_{f}(p)$ is the lowest common multiple of all local
degrees $\deg(f^n,q)$, where $q\in f^{-n}(p)$ and $n\in \N$ are
arbitrary. Thus $\alpha_{f}(p) = 1$ for all $p\in \Sp \setminus
\post(f)$. 

The \emph{orbifold} associated with $f$ is
$\mathcal{O}_f\coloneqq (S^2,\alpha_f)$. The \emph{Euler
  characteristic} of $\mathcal{O}_f$ is
\begin{equation*}
  \chi(\mathcal{O}_f) \coloneqq 2 -
  \sum_{p\in\post(f)}\left(1-\frac{1}{\alpha_f(p)}\right). 
\end{equation*}
It satisfies $\chi(\mathcal{O}_f) \leq 0$. We call $\mathcal{O}_f$
\emph{hyperbolic} if $\chi(\mathcal{O}_f)<0$, and \emph{parabolic} if $\chi(\mathcal{O}_f)=0$. A \emph{Latt\`{e}s map} is a rational Thurston
map with parabolic orbifold that does not have periodic critical
points (a point $p\in\Sp$ is called \emph{periodic} if $f^n(p)=p$ for an $n\in\N$). 

Two Thurston maps $f\colon S^2\to S^2$ and $g\colon
\widetilde{S}^2\to \widetilde{S}^2$, where $\widetilde{S}^2$ is
another topological $2$-sphere,
are called \emph{Thurston equivalent} if
there are homeomorphisms $h_0,h_1\colon S^2\to \widetilde{S}^2$
that are isotopic rel.\ $\post(f)$ such that $h_0 \circ f = g
\circ h_1$. 


\subsection{Julia and Fatou sets}
\label{subsec:Julia-Fatou-sets}

The reader is referred to \cite{Milnor_Book} for background in
complex dynamics. 

Let $f:\CDach\to\CDach$ be a rational map. Then
the \emph{Julia set} $\JC_f$ of $f$ is the closure of the set of
\emph{repelling periodic points}. If $f$ is postcritically-finite then $\JC_f$ coincides with the set of limit points of the \emph{full backwards orbit} $\bigcup_{n\geq 0} f^{-n}(t)$ of any point $t\in\CDach\setminus\post(f)$.
The \emph{Fatou set} of $f$ is the set $\FC_f=\CDach\setminus\JC_f$. A \emph{Fatou component} is a connected component of the Fatou set.


If $f$ is a polynomial, then $\JC_f$ coincides with the boundary of the set 
\[\KC_f\coloneqq\{z\in\C: \{f^n(z)\}_{n\geq 0} \text{ is bounded}\},\]
called the \emph{filled Julia set}.

\subsection{Growth of groups}
\label{subsec:growth-groups}

Given a finitely generated group $G$ with symmetric set of
generators $S$ one defines the \emph{word length} of an element $g\in G$
with respect to $S$ by
\begin{equation*}
  \ell_{G,S}(g)
  \coloneqq 
  \min\{n\in \N_0 : g=s_1\dots s_n, 
  \text{ where } 
  s_j \in S \text{ for } j=1,\dots,n\};
\end{equation*}
and the \emph{growth function} of $G$ with respect to $S$ by
\begin{equation*}
  \growth_{G,S}(n)\coloneqq \#\{g\in G : \ell_{G,S}(g) \leq n\}.
\end{equation*}

The group $G$ is said to be of
\begin{enumerate}[label=(\arabic*),font=\normalfont,leftmargin=*]
 \item\emph{polynomial growth}, if $\growth_{G,S}(n)$ is bounded above by a polynomial, that is, $\growth_{G,S}(n) \leq C n^k$ for some constants $C>0$, $k\in\N$; 
 \item\emph{exponential growth}, if $\growth_{G,S}$ is bounded below by an exponential function, that is, $\growth_{G,S}(n) \geq c \exp(\alpha n)$ for some constants $c,\alpha>0$;
 \item\emph{intermediate growth}, otherwise.
\end{enumerate}

It can be shown
that whether $G$ has polynomial, intermediate, or exponential growth does not depend on the choice of the generating set $S$.

Milnor was the first to ask whether groups
of intermediate growth exist \cite{Mil_growth}. This was answered in the positive
by Grigorchuk in \cite{Gri_G}. The example of a group of intermediate growth constructed by Grigorchuk in this paper, now called 
the \emph{Grigorchuk group}, is a \emph{self-similar group}. This
means it acts on a binary rooted tree in a certain ``self-similar'' fashion, see Definition \ref{def:self-similar_group} and \cite[Chapter~1]{Nekra}. Self-similar groups often exhibit very interesting behavior and have been
studied intensely in the last decades.  For more information on the theory of growth
of groups we refer the reader to the recent survey
\cite{Gri_growth_survey} by Grigorchuk, see also \cite{BGN}, a survey on self-similar groups and
their properties. 

\subsection{Iterated monodromy action and group}
\label{subsec:iterated-monodromy-group}

Let $f\colon S^2\to S^2$ be a Thurston map and $\post(f)$ be its postcritical set. Since $\post(f)\subset f^{-1}(\post(f))$, $f$ induces a covering 
\[
f:\Mdomain_1 = S^2\setminus  f^{-1}(\post(f)) \to \Mdomain = S^2 \setminus \post(f). 
\]

Let $d$ be the topological degree of $f$. Fix a basepoint $t\in
\Mdomain$. We consider the backward orbit of $t$, meaning the
formal disjoint union $\bT_f=\bigsqcup_{n=0}^{\infty} f^{-n}(t)$.
Then $\bT_f$ has a natural
structure of a $d$-ary rooted tree: the root, that is, the unique point on the level 0, is the basepoint
$t\in f^{-0}(t)=\{t\}$, and a vertex $p\in f^{-n}(t)$ (of the
$n$-th level) is connected by an edge to the vertex $f(p)\in
f^{-(n-1)}(t)$ (of the $(n-1)$-th level), for $n\in\N$. The set $\bT_f$ viewed as a rooted tree is called the \emph{dynamical preimage tree} of $f$ at the basepoint $t$.


The fundamental group $\pi_1(\Mdomain,t)$ acts naturally on the set of preimages $f^{-n}(t)$, $n\in\N_0$: the image $p^{[\gamma]}$ of a point $p\in f^{-n}(t)$  under the action of a loop $[\gamma]\in\pi_1(\Mdomain,t)$ is equal to the endpoint of the unique $f^n$-lift of $\gamma$ that starts at $p$. It is easy to see that the action of the fundamental group on the vertices of the dynamical preimage tree $\bT_f$ preserves the tree structure, that is, the fundamental group acts on $\bT_f$ by automorphisms of the rooted tree. Thus, we have defined a group homomorphism 
\[\phi_f:\pi_1(\Mdomain,t) \to \Aut(\bT_f)\]
from the fundamental group of $\Mdomain=\Sp\setminus\post(f)$ to the automorphism group $\Aut(\bT_f)$ of the $d$-ary rooted tree $\bT_f$.

\begin{definition}\label{def:img}
The \emph{iterated monodromy action} is the action of
$\pi_1(\Mdomain,t)$ on the dynamical preimage tree $\bT_f$. The
quotient of $\pi_1(\Mdomain,t)$ by the kernel of its action on
$\bT_f$ is called the \emph{iterated monodromy group} of $f$ and
is denoted by $\img(f)$. That is,
\[ \img(f) =  \pi_1(\Mdomain,t)/\Ker(\phi_f) \cong \phi_f(\pi_1(\Mdomain,t)) .\]
\end{definition}

\subsection{Selected properties of IMG's}\label{subsec:selected-properties-of-polynomials}
\longhide{
\begin{prop}
Let $f:\Sp\to\Sp$ be a Thurston map. Then $\img(f)$ is a self-similar group .
\end{prop}

\begin{theorem}
Let $f$ be a post-critically finite rational map. Then the sequence of the \emph{Schreier graphs} of $\img(f)$ converges to the Julia set of $f$.
\end{theorem}
}

The iterated monodromy group of a Thurston map $f$ is self-similar \cite[Proposition 5.2.2]{Nekra}. Furthermore, if $f$ is a postcritically-finite rational map, then the iterated monodromy group (together with the associated wreath recursion, see Appendix A.1) contains all the ``essential'' information about the dynamics of the map $f$: one can reconstruct from $\img(f)$ the action of $f$ on its Julia set $\JC_f$. In this case the \emph{limit space} of the iterated monodromy group is homeomorphic to the Julia set of the map \cite[Theorem 6.4.4]{Nekra}. Moreover, one can approximate the Julia set $\JC_f$ by a certain sequence of finite graphs.

\begin{definition}\label{def:Schr_graph}
Let $G$ be a group generated by a finite set $S$ and acting on a
set $X$. The \emph{labeled Schreier graph $\Gamma(G,S,X)$} is a
labeled directed (multi)graph with the set of vertices $X$ and
the set of directed edges $X\times S$, where the edge $(x,s)$
starts at $x$, ends at $x^s$, and is labeled by $s$, for each $x\in X$ and $s\in S$.
\end{definition}

Nekrashevych showed that given a postcritically-finite rational map $f$, the sequence of the Schreier graphs of
the action of $\img(f)$ on the $n$-th level of the dynamical
preimage tree $\bT_f$ converges to the Julia set of $f$,
see \cite[Chapters 3 and 6]{Nekra}.

It was observed that even very simple maps generate iterated monodromy groups with complicated structure and exotic properties which are hard to find among groups defined by more ``classical'' methods, see \cite{BGN}. For instance, $\img(z^2+i)$ is a group of intermediate growth \cite{BuxPerez} and $\img(z^2-1)$ is an amenable group of exponential growth \cite{GriZ_Basilica,BartholdiVirag}. Below we list the most important results relevant to the growth theory of $\img$'s that are known at the moment.

\begin{theorem}[\cite{NekraIMG11}]\label{thm:no-free-groups}
Let $f$ be a postcritically-finite rational map. Then $\img(f)$ does not contain a free group of rank $2$.
\end{theorem}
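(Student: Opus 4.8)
The plan is to deduce the statement from two facts: first, that for a postcritically-finite rational map $f$ the group $\img(f)$ is a \emph{contracting} self-similar group; and second, that a contracting self-similar group cannot contain a free group of rank $2$.

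For the first fact I would use the conformal expansion of $f$. A postcritically-finite rational map with hyperbolic orbifold is expanding with respect to the canonical orbifold metric on a neighbourhood of its Julia set, and this expansion is exactly what forces the wreath recursion of $\img(f)$ to be contracting: there is a finite \emph{nucleus} $\mathcal{N}\subset\img(f)$ and constants $\lambda\in(0,1)$, $C>0$ with $\ell(g|_v)\le \lambda^{|v|}\ell(g)+C$ for every group element $g$ and every tree vertex $v$, so that all sufficiently deep sections of any element lie in $\mathcal{N}$. (When the orbifold is parabolic the map is Latt\`es-like and $\img(f)$ is virtually abelian, hence visibly contains no free subgroup; so we may assume the orbifold is hyperbolic.)

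For the second, and main, fact I would argue by contradiction. Suppose $H=\langle a,b\rangle\le\img(f)$ is free of rank $2$. The self-similar structure gives, for each tree vertex $v$, a restriction homomorphism $\phi_v\colon \Stab_H(v)\to\img(f)$, $g\mapsto g|_v$, and $\Stab_H(v)$, having finite index in $H$, is again free of rank $\ge 2$ by Nielsen--Schreier. The idea is a descent: contraction guarantees that for a fixed element the sections $g|_v$ lie in the finite nucleus $\mathcal{N}$ once $|v|$ is large, so deep restrictions confine the image of any bounded-length portion of $H$ to data governed by the finite set $\mathcal{N}$. Since $H$ contains exponentially many reduced words of each length while $\mathcal{N}$ is finite, the aim is to produce two distinct reduced words whose sections agree in a way that forces a nontrivial relation in $H$, contradicting freeness.

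The hard part is exactly this last step. The tempting shortcut---bounding the number of elements of $H$ of length $n$ by the number of their level-$m$ portraits together with the finitely many possible nucleus sections---does \emph{not} work, because a contracting group may have exponential word growth (the Basilica group $\img(z^2-1)$ is the standard example), so such a count never beats the $\sim 3^{\,n}$ reduced words. The argument must therefore be genuinely structural: one tracks how the finitely many nucleus elements recur under repeated restriction along a carefully chosen descent, and uses that in a free group distinct reduced words remain distinct, so that finiteness of $\mathcal{N}$ forces---via the pigeonhole principle applied to the section data rather than to the elements themselves---an actual relation. An alternative, more dynamical route would exploit that $\img(f)$ preserves the uniform measure on the boundary of the tree and that the orbit groupoid of a contracting action is amenable; restricting this amenable action to $H$ and invoking non-amenability of $F_2$ would give a contradiction, provided one can control the stabilisers of the $H$-action on the boundary.
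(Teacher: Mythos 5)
First, a point of comparison: the paper does not prove this statement at all --- it is quoted verbatim from \cite{NekraIMG11}, so there is no in-paper proof to measure your attempt against. Your top-level strategy (show $\img(f)$ is contracting, then show a contracting self-similar group contains no free subgroup of rank $2$) is exactly the route taken in the cited source, and your first half is fine: contraction of $\img(f)$ for a postcritically-finite rational map follows from expansion with respect to the canonical orbifold metric (this is \cite[Theorem~6.4.4]{Nekra}, which the present paper itself invokes in Appendix~A.2), and the parabolic-orbifold case is indeed virtually abelian.

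The genuine gap is the second half, and you have correctly identified where it is but not closed it. The implication ``contracting $\Rightarrow$ no $F_2$'' is the entire content of \cite{NekraIMG11}; it is not a pigeonhole argument on section data, and your sketch (``track how nucleus elements recur under restriction until finiteness of $\mathcal{N}$ forces a relation'') does not yet contain the idea that makes it work. The actual argument establishes a Tits-alternative-style dichotomy for an arbitrary group $G\le\Aut(T)$ containing a free subgroup of rank $2$: either some point $w$ of the boundary $X^\omega$ has a free subgroup acting on its orbit with trivial stabilizer of $w$, or the group of germs at some boundary point itself contains a free subgroup of rank $2$; one then shows that a contracting action excludes \emph{both} alternatives (the first by a contraction/length estimate along the orbit of $w$, the second because germs of a contracting action at boundary points are controlled by the finite nucleus). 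Nothing in your descent replaces this dichotomy, and as you yourself observe, crude counting cannot work since contracting groups may have exponential growth. Your fallback route is also not available as stated: amenability of contracting groups is a well-known open problem, and even granting amenability of the groupoid of germs one cannot conclude without precisely the control of boundary stabilizers that you flag as missing --- which is again the content of the dichotomy above. So the proposal is a correct plan with the right first lemma, but the main lemma is asserted rather than proved.
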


\begin{theorem}[\cite{NekraIMG11}]\label{thm:poly-exp-growth}
If a postcritically-finite polynomial $f$ has two finite Fatou components with intersecting closures, then $\img(f)$ contains a free semigroup of rank 2 and is of exponential growth.
\end{theorem}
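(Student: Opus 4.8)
The plan is to locate the free sub-semigroup inside the self-similar (wreath recursion) structure of $\img(f)$, using the touching point of the two Fatou components to produce the necessary coupling between generators. Since $f$ is postcritically-finite, $\JC_f$ is locally connected, so each bounded Fatou component is a Jordan domain that is eventually iterated onto a periodic one; on a periodic component the first-return map is conjugate, in B\"ottcher coordinates, to $z\mapsto z^{d}$ on the unit disk $\D$, where $d$ is the local degree at the center. The centers of the periodic bounded Fatou components form superattracting cycles and hence lie in $\post(f)$. I would first reduce to the periodic situation: if $\overline{U_1}\cap\overline{U_2}\neq\emptyset$, then there is a touching point $q\in\partial U_1\cap\partial U_2\subset\JC_f$, and by tracking the forward orbit of the pair $(U_1,U_2,q)$ one reaches two periodic bounded Fatou components meeting at a periodic touching point $q_0$, so after replacing $f$ by a suitable iterate and relabeling it suffices to treat two touching periodic components with centers $c_1,c_2\in\post(f)$.

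Next I would compute the relevant part of the wreath recursion. Take generators $a,b\in\img(f)$ given by the monodromy of small loops around $c_1$ and $c_2$ respectively, and a basepoint adapted to the flower of Fatou components meeting at $q_0$. Lifting these loops under $f$ and tracking which preimages of the basepoint the lifts enclose, the local degrees at $c_1,c_2$ produce cyclic permutations of the petals, while the incidence of the two components at $q_0$ forces one section of $a$ to be (a conjugate of) $b$ and one section of $b$ to be (a conjugate of) $a$. Schematically this yields a recursion of Basilica type, in which $a$ and $b$ feed into one another at the vertex of $\bT_f$ corresponding to $q_0$; when both local degrees equal $2$ this is exactly the recursion of $\img(z^2-1)$.

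From such a recursion I would extract two explicit elements $u,v$ (positive words in $a,b$) and prove that the canonical map from the free monoid on two letters into $\img(f)$ is injective, i.e.\ that the sub-semigroup $\langle u,v\rangle^{+}$ is free. The mechanism is a ping-pong argument on $\bT_f$: one arranges that $u$ and $v$ fix a common vertex $w$ whose sections $u|_w,v|_w$ are again (copies of) $u,v$, while the top-level action distinguishes which of $u,v$ is the leading letter of a positive word; one then peels off the leading letter and recurses on the section, so that the whole word is recoverable from the action on a sufficiently high level of the tree. Freeness of the semigroup, together with the standard fact that a finitely generated group containing a free sub-semigroup of rank $2$ has exponential growth, then completes the argument.

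The main obstacle is precisely the freeness of $\langle u,v\rangle^{+}$. Because $\img(f)$ contains no free subgroup of rank $2$ by Theorem~\ref{thm:no-free-groups}, the ping-pong must be carried out using only positive words, exploiting the specific form of the coupled recursion (which makes the generated \emph{semigroup}, but not the generated group, free) rather than any use of inverses; this is the delicate point, and it is also where the touching hypothesis is indispensable, since without a shared boundary point the sections of $a$ and $b$ never reference each other and the coupling that drives the argument disappears (as is the case for the airplane). A secondary technical point is to handle arbitrary local degrees $d_1,d_2\geq 2$ uniformly rather than only the quadratic Basilica case, which requires choosing $u,v$ as suitable positive products adapted to the petal permutations.
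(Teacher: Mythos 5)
First, a point of order: the paper does not prove this statement --- Theorem~\ref{thm:poly-exp-growth} is quoted as background from \cite{NekraIMG11} --- so there is no in-paper proof to compare against. The closest in-paper analogues of your strategy are Lemma~\ref{lem:free_semi_gp} (the tile-shifting argument for $f_1$) and the wreath-recursion computation in Appendix~A.2, and your overall plan (coupled recursion plus a positive-word induction) is indeed the style of argument used there and in \cite{GriZ_Basilica}.

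The sketch, however, has a genuine gap at its core: it locates the role of the touching hypothesis in the wrong place. The ``coupling'' you derive --- a section of $a$ is a conjugate of $b$ and a section of $b$ is a conjugate of $a$ --- is a consequence of the superattracting cycle structure alone: it only records that the center of one periodic Fatou component is an $f$-preimage of the center of another postcritical component. Exactly the same coupling (after passing to an iterate) occurs for the airplane polynomial $P_3$, whose bounded Fatou components have pairwise \emph{disjoint} closures and for which the growth of $\img(P_3)$ is unknown (see the Introduction). So your explicit claim that ``without a shared boundary point the sections of $a$ and $b$ never reference each other'' is false, and the mechanism you describe cannot by itself produce a free semigroup. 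The touching point $q_0$ must instead be used to control the finer planar data of the recursion --- which coordinates carry the nontrivial sections relative to the supports of the root permutations --- and your sketch never extracts any such normal form from the existence of $q_0$. Since the concluding ``positive ping-pong'' is also only described schematically and its success depends entirely on that unestablished normal form, the central step of the proof is missing. A secondary gap is the reduction to periodic components: tracking $(U_1,U_2,q)$ forward is not justified, since $f(U_1)$ and $f(U_2)$ may coincide, and even when the orbits stay distinct you still need the eventual common boundary point of the two periodic components to be itself periodic so that the coupling recurs on every level of $\bT_f$; neither issue is addressed. For contrast, the paper's own free-semigroup criterion, conditions \eqref{eq:E_ginv}--\eqref{eq:prep_rel_prime}, makes the needed geometric input explicit: an invariant arc along which $a^{k_q/2}b^{k_p/2}$ shifts tiles, plus a branch point off that arc to separate distinct positive words. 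In the polynomial setting the role of that arc is played by the invariant union of internal rays through the touching point, and making this precise is where the actual work of the theorem lies.
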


\begin{theorem}[\cite{AmenabilityInBoundedGroups}]\label{thm:poly-amenable}
If $f$ is a postcritically-finite polynomial, then $\img(f)$ is amenable.
\end{theorem}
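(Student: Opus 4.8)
The plan is to reduce the statement to the general amenability theorem for groups generated by \emph{bounded automata} (in the sense of Sidki), together with Nekrashevych's structural description of the iterated monodromy groups of polynomials. In outline: show that $\img(f)$ is generated by a bounded automaton, then quote the theorem that bounded automaton groups are amenable.

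First I would fix the standard self-similar presentation. Since $\img(f)$ is self-similar (which we may assume from the result quoted above), the group acts faithfully on the $d$-ary dynamical preimage tree $\bT_f$ and is generated by a finite set $S$ of tree automorphisms described by a wreath recursion $g = \langle g_1, \dots, g_d\rangle \sigma_g$. One takes for $S$ the monodromy generators associated with small loops around the postcritical points $\post(f)$; the recursion of each such generator is read off from the planar combinatorics of $f$ near $\post(f)$. These generators, together with all of their \emph{sections} (the states reachable under iterated restriction to subtrees), form a finite automaton $\mathcal{A}$, because $\post(f)$ is finite.

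The key step is to show that $\mathcal{A}$ is \emph{bounded}, i.e.\ that its activity function (the number of level-$n$ sections of a generator that act non-trivially) stays bounded as $n\to\infty$. Here one uses essentially that $f$ is a \emph{polynomial} and postcritically-finite: every critical point is preperiodic, so the postcritical orbit is finite and eventually periodic. Consequently the non-trivial sections of a monodromy generator do not spread across the tree but remain localized along the rays that track the preperiodic combinatorics of $\post(f)$; in automaton language this says that the non-trivial cycles in the Moore diagram of $\mathcal{A}$ are disjoint and that no directed path joins two distinct such cycles, which is exactly Sidki's boundedness condition. This is Nekrashevych's theorem that the $\img$ of a postcritically-finite polynomial is generated by a bounded automaton (see \cite{Nekra}); verifying it amounts to a careful but purely combinatorial analysis of the wreath recursion, and this is where I expect the technical work to concentrate.

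Finally, I would invoke the theorem of \cite{AmenabilityInBoundedGroups} that every group generated by a bounded automaton is amenable. This is the analytically deep input: its proof proceeds via self-similar random walks and a ``Münchhausen''-type induction exploiting recurrence of the induced walks on the orbital Schreier graphs (the inductive-limit graphs that, by the convergence result quoted above, approximate $\JC_f$). Granting the structural reduction, amenability of $\img(f)$ then follows immediately. The main obstacle is thus twofold: the uniform boundedness of $\mathcal{A}$ for an \emph{arbitrary} postcritically-finite polynomial, which is elementary but combinatorially delicate, and the amenability theorem for bounded automaton groups itself, which I would cite as a black box rather than reprove.
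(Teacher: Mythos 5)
The paper offers no proof of this statement: it is quoted verbatim from \cite{AmenabilityInBoundedGroups}, and your two-step reduction --- $\img$'s of postcritically-finite polynomials are generated by bounded automata (Nekrashevych, \cite{Nekra}), and groups generated by bounded automata are amenable via self-similar random walks (\cite{AmenabilityInBoundedGroups}) --- is exactly the argument behind that citation, so your outline is the intended one.

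One caveat on your justification of the key step: preperiodicity of the critical orbits cannot be what forces the automaton to be bounded, since every postcritically-finite map has a finite, eventually periodic postcritical set, yet the paper's central examples (the map $f_1$, the Sierpi\'{n}ski carpet maps) are postcritically-finite rational maps whose $\img$'s contain free semigroups; indeed the generator $b$ of $\img(f_1)$ in \eqref{eq:wreath_f} has sections $b$ and $b^{-1}$ simultaneously, so its activity grows exponentially. The boundedness is genuinely polynomial-specific: it comes from the invariant planar tree (spider/Hubbard-tree structure, with $\infty$ a fixed critical value) that lets one choose connecting paths making the recursion a kneading automaton in which each generator has at most one non-finitary section. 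That is where the technical content of Nekrashevych's theorem lies, so you are right to defer to it, but the heuristic you give in its place would ``prove'' too much.
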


\hide{Since it was shown that $P_1(z)=z^2 + i$ has iterated monodromy
group of intermediate growth, it was conjectured that polynomials
that are ``similar to'' $P_1$ have $\img$ of intermediate
growth. However, the precise meaning of ``similar to'' is not}

For more information on the theory of iterated monodromy groups we refer the reader to \cite[Chapters 5--6]{Nekra} and \cite{NekraIMG11}.

\section{Construction of the map $f_1$}
\label{sec:constr-f}

Here we describe the map $f_1\colon \CDach \to \CDach$ that will
serve as our main example. It is a postcritically-finite
rational map, such that no critical point is periodic. This
means that the Julia set of $f_1$ is the whole Riemann sphere
$\CDach$ (see \cite[Corollary~16.5]{Milnor_Book}). Since we are mainly interested in the combinatorial
behavior of $f_1$, we will construct $f_1$ in a combinatorial
fashion. 

We consider two polyhedral surfaces $\Delta$ and $\Delta'$
constructed as follows. Let $T$ be a Euclidean triangle with angles $\pi/2,\pi/3,\pi/6$. The
surface $\Delta$ is obtained by gluing two identical copies of
$T$ together along their boundaries. The surface $\Delta'$
is obtained by gluing two Euclidean triangles with angles
$2\pi/3,\pi/6,\pi/6$ together along their boundaries. The two
triangles of $\Delta$, as well as $\Delta'$, are called the top and
bottom faces. They correspond to the top and bottom triangles in
Figure~\ref{fig:deff}. The vertices of each such triangle are
labeled $-1,1,\infty$; they correspond to the
vertices of $\Delta$ and $\Delta'$. We color the top face of
$\Delta$ white, 
and the bottom one black.  Each face of $\Delta'$ can be divided
into $6$ triangles $T'$ that are similar to $T$; we color them
black and white as shown in Figure~\ref{fig:deff}. 

The map $g_1\colon \Delta'\to \Delta$ is now constructed as
follows. Each of the $6$ white triangles $T'\subset \Delta'$ is
mapped by a similarity to the white face of $\Delta$, that is,
each vertex of $T'$ is mapped to the one of the same
angle. Similarly, each of the $6$ black triangles $T'\subset
\Delta'$ is mapped to the black face of $\Delta$ in the same
fashion. 
To illustrate
the mapping behavior, the vertices of $\Delta$ are colored red,
blue, and green in Figure~\ref{fig:deff}, and each vertex
$v$ of a triangle $T'\subset\Delta'$ is colored the same as $g_1(v)$.

\begin{figure}
  \centering
  \begin{overpic}
    [width=13cm, tics=10,
    ]{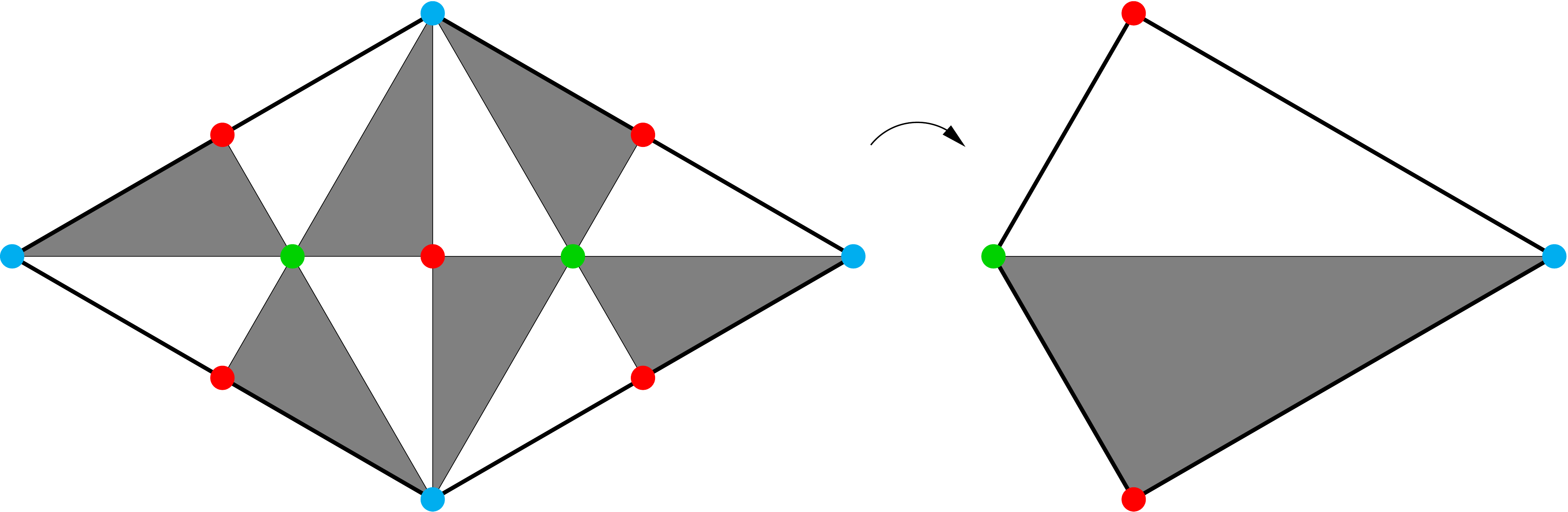}
    \put(-1.5,12.5){$-1$}
    \put(54,12.5){$1$}
    \put(29,-1){$\infty$}
    \put(29,31.5){$\infty$}
    \put(13,5.5){$a_1$}
    \put(13,25.5){$a_1$}
    \put(14.7,14.5){$c_1$}
    \put(40,5.5){$a_0$}
    \put(40,25.5){$a_0$}
    \put(25.5,13.5){$0$}
    \put(33,17){$c_0$}
    \put(8,27){$\Delta'$}
    \put(57,26.5){$g_1$}
    %
    \put(60.5,12.5){$-1$}
    \put(98.5,12.5){$1$}
    \put(73.5,-1){$\infty$}
    \put(73.5,31.5){$\infty$}
    \put(87,27){$\Delta$}
  \end{overpic}
  \caption{The polyhedral surfaces $\Delta'$ (left) and $\Delta$ (right) and the map $g_1\colon\Delta'\to\Delta$.}
  \label{fig:deff}
\end{figure}

It is a standard fact that every polyhedral surface can be
equipped with a conformal structure in a natural way, see for
example \cite[Section~3.3]{BearRiem}. By the
uniformization theorem this means that there are conformal maps
$\varphi\colon \Delta\to \CDach$ and
$\varphi'\colon \Delta'\to \CDach$. To normalize these maps, we
demand that the vertices of $\Delta$ and $\Delta'$ labeled
$-1,1,\infty$ are mapped to $-1,1,\infty\in \CDach$
respectively. The symmetry then implies that the top face
of $\Delta$, as well as the top face of $\Delta'$, are mapped to
the upper half-plane $\Halb^+$; the other face of $\Delta$, and
respectively of $\Delta'$,  is mapped to the lower half-plane
$\Halb^-$ (otherwise, the map
$p\mapsto\overline{\varphi(\overline{p})}$ is distinct from
$\varphi$ violating its uniqueness\footnote{Here $\overline{p}$ denotes the point in $\Delta$ obtained from $p\in\Delta$ by reflection along the edge $[-1,1]$, see Figure~\ref{fig:deff}.}).

In the case at hand, we can actually construct the maps
$\varphi$ and $\varphi'$ explicitly. Indeed, $\varphi$ maps the
white triangle in $\Delta$ to the upper half-plane by the
Riemann map normalized so that the vertices labeled $-1,1,\infty$
are mapped to the points $-1,1,\infty\in\CDach$. Also $\varphi$ maps the black
triangle to the lower half-plane by a Riemann map with same
normalization. Similarly, $\varphi'$ is constructed by mapping
the top and bottom face of $\Delta'$ to the upper and lower half-plane with the same normalization. 


The map $f_1\colon \CDach\to \CDach$ given by
$f_1 \coloneqq \varphi \circ g_1 \circ (\varphi')^{-1}$ is now
our desired map\footnote{The map $f_1$ was originally constructed in \cite{snowemb} (where
is was called $R_6$). Its purpose then was to construct a
quasisymmetric map from a certain fractal surface to
$\CDach$. This, however, will not be relevant here.}. It is elementary to check that it is indeed a
rational map.

By a slight abuse of notation, let us denote the images of the vertices in $\Delta'$ labeled by $a_0,
a_1, c_0, c_1$ under $\varphi'$ again by $a_0, a_1, c_0, c_1$. Note
also that by symmetry, the vertex in $\Delta'$ labeled 0 is mapped to $0\in \CDach$ by $\varphi'$. 

Then the set of critical points of $f_1$ is $\crit(f_1)=\{a_0,a_1, c_0, c_1, 0,\infty\}\subset \CDach$ and the set of postcritical points is $\post(f_1)=\{-1,1,\infty\}\subset \CDach$.


Recall that the \emph{ramification portrait} of a Thurston map $f:\Sp\to\Sp$ is a directed graph with the vertex set $V=\bigcup_{n\geq 0} f^n(\crit(f))$ consisting of the union of all orbits of all critical points of $f$. For $v,w\in V$ there is a directed edge from $v$ to $w$ in the graph if and only if $f(v)=w$. Moreover, if $\deg(f,v)=d_v\geq 2$, that is, $v$ is a critical point of $f$ of degree $d_v$, we label the directed edge from $v$ to $w=f(v)$ by ``$d_v:1$''. Put
differently, the ramification portrait illustrates how critical points are mapped by the map $f$. For instance, the ramification portrait of $f_1$ is shown below.

\begin{equation}
  \label{eq:rami_f_triag_snow}
  \xymatrix{
    c_0 \ar[rd]^{3:1} & & & a_0 \ar[d]^{2:1}&
    \\
    & -1 \ar[r] & 1\ar@(ru,lu)[] & \infty \ar[l]_{4:1} & 0 \ar[l]_{2:1}
    \\
    c_1 \ar[ru]^{3:1} & & & a_1 \ar[u]_{2:1}&
  }
\end{equation}

We conclude that the
ramification function $\alpha_{f_1}$ of $f_1$ (see
Section~\ref{sec:thurston-maps}) is given by
\begin{equation*}
  \alpha_{f_1}(1) = 24, \quad \alpha_{f_1}(-1) = 3, \quad
  \alpha_{f_1}(\infty) = 2.
\end{equation*}
This means that the orbifold associated with $f_1$ is
hyperbolic. 

\begin{remark}
The map $f_1$ may be given explicitly in the following two forms:
\begin{align}
  \label{eq:defR}
  f_1(z) &= 2\left(\frac{3}{4}\right)^3 
  \frac{z^2-1}{z^2(z^2-\frac{9}{8})^2} +1
  = \frac{2(z^2 - \frac{3}{4})^3}{z^2(z^2 - \frac{9}{8})^2} -1.
\end{align}
It is elementary to check that the two expressions agree. Thus the critical points are in fact $c_0=
\frac{\sqrt{3}}{2}$, $c_1= -\frac{\sqrt{3}}{2}$, $a_0 = \frac{3}{2\sqrt{2}}$, $a_1 =
-\frac{3}{2\sqrt{2}}$ (as well as $0$ and $\infty$). Nevertheless, these
precise values and the explicit formula
\eqref{eq:defR} will be of no importance to us. 
\end{remark}

\section{Tiles, flowers, and the iterated monodromy group}
\label{sec:tiles-flow-iter}

In this section we describe tilings and the iterated monodromy action associated with a Thurston map $f:\Sp\to\Sp$ which has an $f$-invariant Jordan curve $\CC$, such that $\post(f)\subset\CC$. For simplicity, here we only discuss the case $f=f_1$. However, all the definitions and statements can be naturally adapted to the general case.

Note that the extended real line
$\widehat{\R}= \R\cup\{\infty\}\subset \CDach$ is
$f_1$-invariant, meaning that
$f_1(\widehat{\R}) \subset \widehat{\R}$, since $f_1$ is a real
function. Furthermore $\post(f_1)\subset \widehat{\R}$.  The
closures of the components of $\CDach \setminus \widehat{\R}$,
that is, of the upper and lower half-planes, are called
\emph{$0$-tiles}.  The one containing the upper half-plane is
colored white and denoted by $\XOw$; the one containing the lower
half-plane is colored black and denoted by $\XOb$.

\begin{figure}
  \centering
  \begin{overpic}
    [width=10cm, tics=10, 
    ]{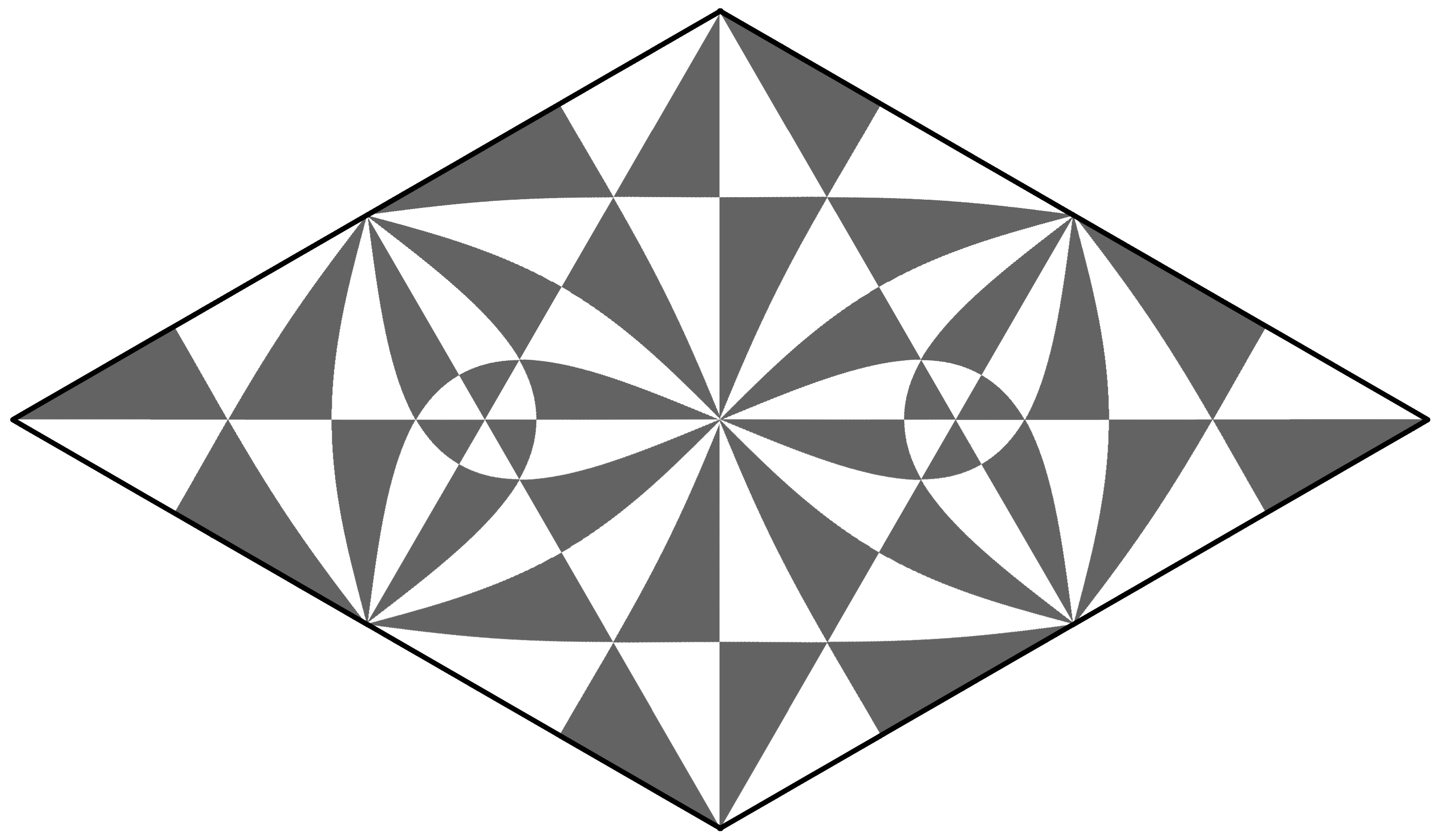}
    \put(-3,25){$-1$}
    \put(99,25){$1$}
    \put(51,-2){$\infty$}
    \put(51,58){$\infty$}
  \end{overpic}
  \caption{The $2$-tiles of $f_1$.}
  \label{fig:2tiles_f1}
\end{figure}

Let $n$ be a non-negative integer. The closure of a component of
$\CDach \setminus f_1^{-n}(\widehat{\R})$ is called an
\emph{$n$-tile}. Note that for any such
$n$-tile $X$ the set $f_1^n(X)$ is one of the two $0$-tiles. We
color $X$ by the color of $f_1^n(X)$, that is, black or
white. The \emph{subdivision} of the sphere $\CDach$ in $2$-tiles
is shown in Figure~\ref{fig:2tiles_f1} (where $\CDach$ is
identified with $\Delta'$ as in the previous section). 


Note that $f_1(\RDach)\subset \RDach$ is equivalent to
$f_1^{-1}(\RDach) \supset \RDach$, which in turn implies 
$f^{-(n+1)}(\RDach) \supset f^{-n}(\RDach)$ for all $n\in \N_0$. It 
follows that each $(n+1)$-tile is contained in an $n$-tile. For
example, when comparing Figure~\ref{fig:deff} and
Figure~\ref{fig:2tiles_f1} one sees that every $1$-tile contains
(is subdivided into) six $2$-tiles. 

Each point in $f_1^{-n}(\post(f_1))$ is called an
\emph{$n$-vertex}. Note that the set of $n$-vertices contains all
critical points of $f_1^n$. Furthermore, each postcritical point
is an $n$-vertex for any $n\in \N_0$, since
$f_1(\post(f_1)) \subset \post(f_1)$. We say that the $n$-vertex
$v$ is of \emph{type} $a$, $b$, or $c$ if $f_1^n(v) = \infty$, $1$, or $-1$,
respectively. Note that each $n$-vertex $v$ is also an
$(n+1)$-vertex, but the type of $v$ as an $n$-vertex may be
different from the type of $v$ as an $(n+1)$-vertex.

The closure of any component of $f_1^{-n}(\widehat{\R}) \setminus f_1^{-n}(\post(f_1))$ is called an \emph{$n$-edge}. Thus, the postcritical points $-1,1,\infty$ of $f_1$, which we called $0$-vertices, divide $\widehat{\R}$ into the three $0$-edges $[-1,1]$, $[1,\infty]$, and $[-\infty,-1]$.


\begin{figure}
  \centering
  \begin{subfigure}[b]{0.4\textwidth}
    \begin{overpic}
      [width=5cm, tics=10,
      ]{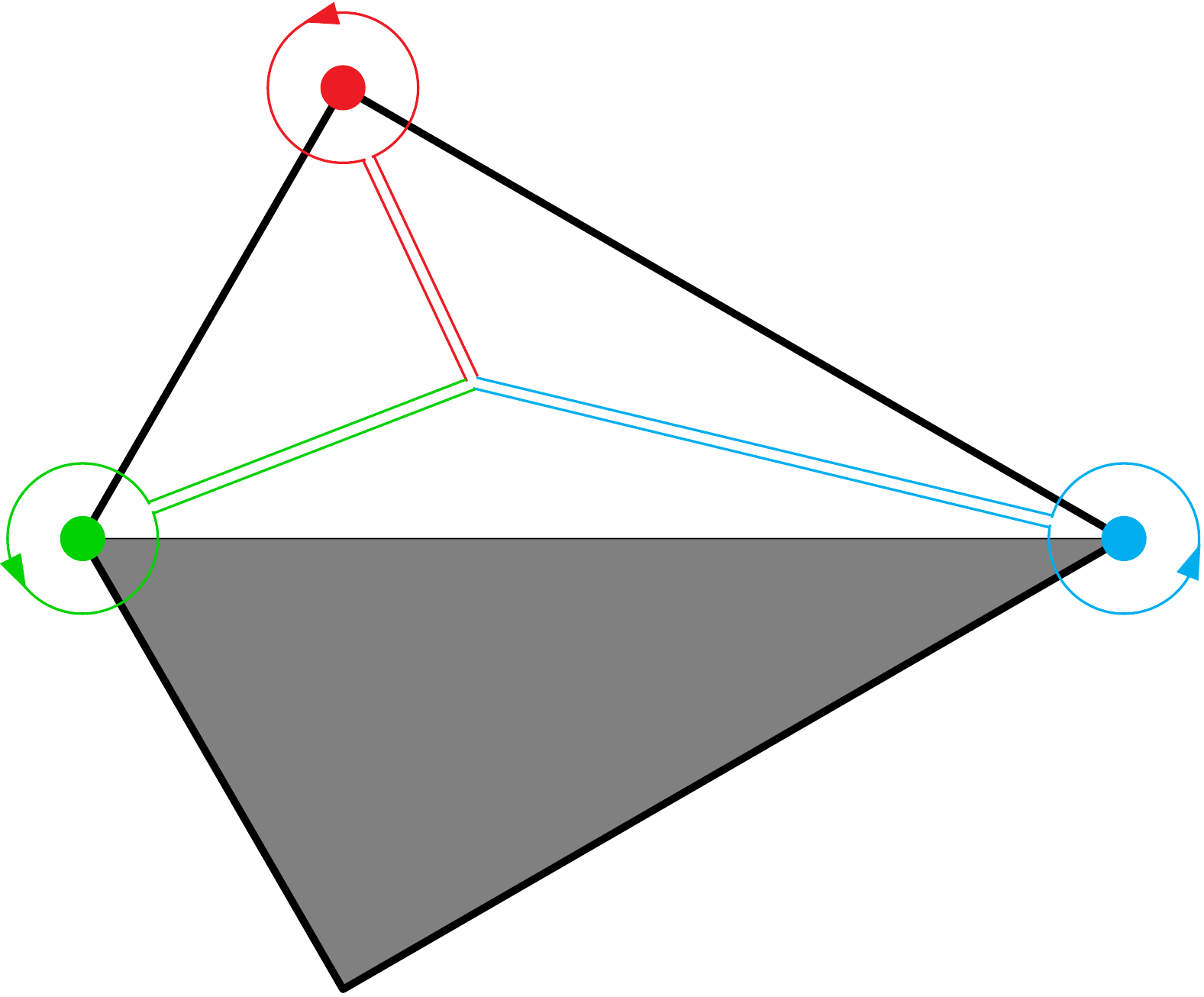}
      \put(38,44){$t$}
      \put(92.5,25){$1$}
      \put(35,74){$\infty$}
      \put(-0.5,25){$-1$}
      \put(55,39.5){$b$}
      \put(37,58.5){$a$}
      \put(20.5,46.5){$c$}
    \end{overpic}
    \caption{Generators of $\img(f_1)$.}
    \label{fig:def_gen_img}
  \end{subfigure}
  \hfill
  \begin{subfigure}[b]{0.4\textwidth}
    \begin{overpic}
      [width=4cm, tics=10,
      ]{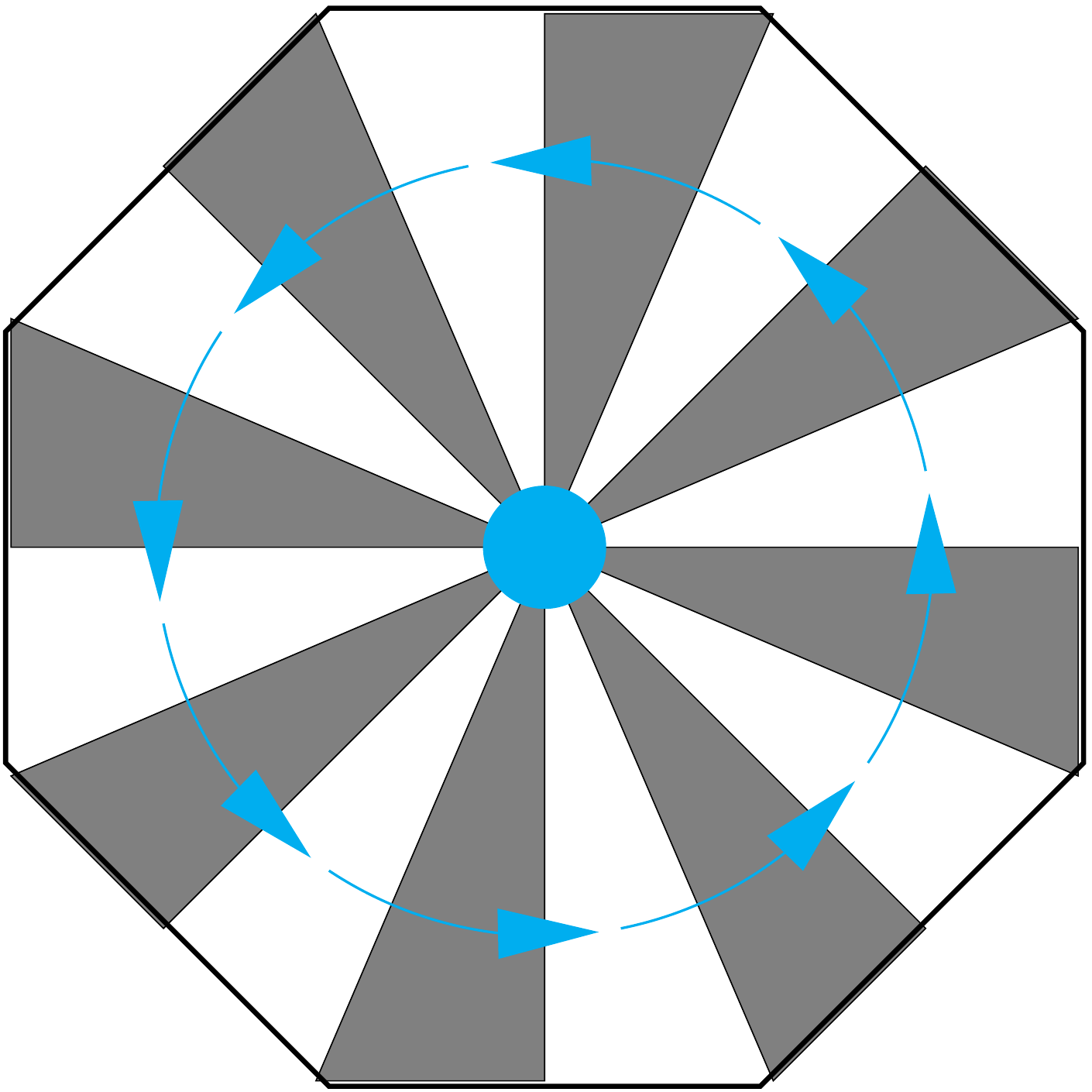}
      \put(86,57){$\scriptstyle{t_j}$}
      \put(67,82){$\scriptstyle{t_{j+1}}$}
      \put(101,58){$\scriptstyle{X_j}$}
      \put(76,94){$\scriptstyle{X_{j+1}}$}
    \end{overpic}
    \caption{Action of $b$ on the white tiles of a $b$-flower.}
    \label{fig:img_flower}
  \end{subfigure}
  \caption{The iterated monodromy action for $f_1$.}
\label{fig:img_f}
\end{figure}

Let $v\in \CDach$ be an $n$-vertex, and let $d_v:=
\deg(f^n,v)$.
Then $v$ is contained in, or incident to, $d_v$ white as well as $d_v$
black $n$-tiles. Furthermore, the colors of $n$-tiles alternate when going
around $v$. Consider the union of all such $n$-tiles,
\begin{equation*}
  \label{eq:defWnv}
  W^n(v):= \bigcup_{\substack{\text{$n$-tile $X$,}\\ \text{s.t. $v\in X$}}} X.
\end{equation*}
This set is called the \emph{flower of level $n$} centered at $v$. 
We call $d_v$ the \emph{degree} of $W^n(v)$.  We say that the flower $W^n(v)$ is an \emph{$a$-}, \emph{$b$-}, or
\emph{$c$-flower}, if the $n$-vertex $v$ is of type $a$, $b$, or
$c$, 
respectively. Note that the terminology is slightly different from the one in
\cite{THEbook}, 
where flowers are always open.
 
\smallskip Let us now choose generators of the iterated monodromy
group of $f_1$. We first choose an arbitrary basepoint $t$ in the
(interior of the) upper half-plane. In particular
$t\notin \post(f_1)$.  Let $a=\gamma_\infty$, $b=\gamma_{1}$, 
and $c=\gamma_{-1}$ be loops based at $t$ around $\infty$, $1$, and
$-1$, respectively. More precisely, we fix a small simple
positively oriented circle around $\infty$ (Here, ``small circle'' means that it is inside a neighborhood $U$ of $\infty$ such that $\post(f_1)\setminus\{\infty\} \subset \CDach \setminus U$). The loop $a$ connects
$t$ to this circle in the upper half-plane, traverses it, and
returns to $t$ in the upper half-plane. The loops $b$ and $c$ are
defined in an analogous fashion, see
Figure~\ref{fig:def_gen_img}
. By abuse of notation we identify
the equivalence classes of $a$, $b$, and $c$ in
$\img(f_1) = \pi_1(\CDach \setminus \post(f_1),t) / \Ker(\phi_{f_1})$
(see Section~\ref{subsec:iterated-monodromy-group}) with $a$, $b$, and $c$, respectively. Since $a,b,c$ generate
$\pi_1(\CDach \setminus \post(f_1),t)$, they generate
$\img(f_1)$. In fact, any two of the elements $a,b,c$ generate
$\img(f_1)$, since $acb=1$.

Note that for any white $n$-tile $X$ the map
$f_1^n\colon X \to \XOw$ is a homeomorphism, see \cite[Proposition~5.17(i)]{THEbook}. Thus each white $n$-tile contains exactly one point from
$f_1^{-n}(t)$. So we may identify the set of white $n$-tiles with the $n$-th level of the dynamical preimage tree $\bT_{f_1}$ on which $\img(f_1)$ acts. Furthermore, each white $n$-tile, and thus any $p\in f_1^{-n}(t)$, is contained in exactly one $a$-flower, exactly one $b$-flower, as well as exactly one $c$-flower of level $n$. Next we are going to describe how to read off the action of $\img(f_1)$ on tiles from the tiling picture.

Consider a $b$-flower $W^n(v)$ of level $n$. From
\eqref{eq:rami_f_triag_snow} we see that its degree is
$d_v\in\{1, 3, 4, 8\}$. Note that $\alpha_{f_1}(1)=24$ is the lowest
common multiple of these degrees by definition of the ramification function.  Let
$X_0, \dots, X_{d_v-1}$ be the white $n$-tiles contained in
$W^n(v)$ labeled mathematically positively around $v$. Fix one
such white $n$-tile $X_j \subset W^n(v)$.  Let us consider the
lift $\widetilde{b}$ of the loop $b$ starting at the point
$t_j\in f_1^{-n}(t) \cap X_j$. Then its endpoint is the unique
point $t_{j+1}\in X_{j+1}\cap f_1^{-n}(t)$ (here the index is taken
$\bmod(d_v)$). \hide{Indeed $\widetilde{b}$ crosses exactly one black
$n$-tile mathematically positively around $v$.}  We conclude that
$b$ \emph{acts on white $n$-tiles by rotations around the centers of 
  $b$-flowers}. This is illustrated in
Figure~\ref{fig:img_flower}, where blue arrows represent lifts of $b$ (up to homotopy). The analog description holds for the
generators $a$ and $c$. 

The Schreier graph for $\img(f_1)$ acting on $f^{-1}(t)$ (the
first level of the dynamical preimage tree) is shown in
Figure~\ref{fig:Schreierf1_11}. Here, we colored the edges red, blue, and greed instead of labeling them by generators $a$, $b$, and $c$, respectively.
In this way, we may think of the sphere tiling generated by the
$n$-tiles as a graphical representation of the Schreier graph of
the action of $\img(f_1)$ on the $n$-th level of the dynamical
preimage tree $\bT_{f_1}$. 

\begin{figure}
  \centering
  \begin{overpic}
    [width=7.5cm,tics=10,
    ]{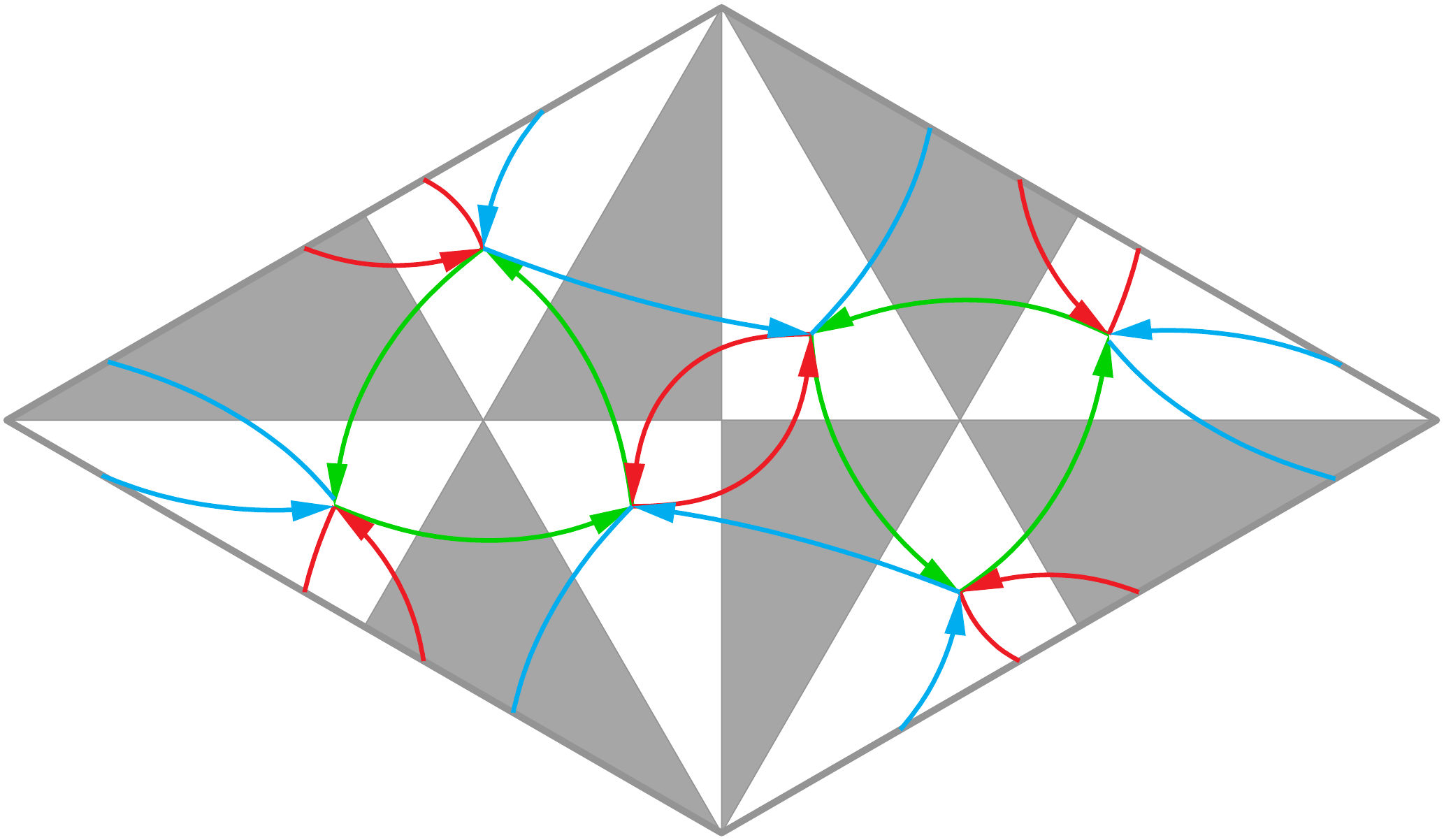}
  \end{overpic}
  \caption{The first level Schreier graph of $\img(f_1)$.}
\label{fig:Schreierf1_11}
\end{figure}

\begin{lemma}
  \label{lem:order_alpha}
  The order of each chosen generator of $\img(f_1)$ is given by the value of the
  ramification function at the corresponding postcritical point:
  \begin{equation*}
    \ord(a) = \alpha_{f_1}(\infty) = 2, 
    \quad
    \ord(b) = \alpha_{f_1}(1) = 24,
    \quad
    \ord(c) = \alpha_{f_1}(-1) = 3.
  \end{equation*}
\end{lemma}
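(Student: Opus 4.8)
The plan is to read off the order of each generator directly from the ``rotation around flowers'' description of the iterated monodromy action given above; I will carry out the argument for $b$, the cases of $a$ and $c$ being identical. By Definition~\ref{def:img} the group $\img(f_1)$ is the faithful image of $\pi_1$ in $\Aut(\bT_{f_1})$, so an element equals $1$ precisely when it acts trivially on the whole dynamical preimage tree $\bT_{f_1}$, that is, when it fixes every white $n$-tile for all $n$. Hence $\ord(b)$ is the smallest positive integer $k$ for which $b^k$ acts trivially on every level.

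I then fix a level $n$ and analyze the permutation that $b$ induces on the white $n$-tiles. Since each white $n$-tile lies in exactly one $b$-flower, the $b$-flowers of level $n$ partition the set of white $n$-tiles. On a $b$-flower $W^n(v)$ of degree $d_v=\deg(f_1^n,v)$ the description above shows that $b$ sends $X_j$ to $X_{j+1}$ with indices taken $\bmod(d_v)$, so $b$ restricts there to a single $d_v$-cycle. Therefore $b$ acts on level $n$ as a product of disjoint cycles whose lengths are exactly the degrees of the $b$-flowers of that level, and $b^k$ fixes level $n$ if and only if $d_v\mid k$ for every type-$b$ vertex $v$ of level $n$. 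Taking the least common multiple over all levels gives
\[
\ord(b)=\operatorname{lcm}\{\deg(f_1^n,v): n\in\N,\ f_1^n(v)=1\},
\]
which is precisely $\alpha_{f_1}(1)$ by the definition of the ramification function; this equals $24$ as recorded in Section~\ref{sec:constr-f}. The same computation applied to the $a$-flowers (vertices $v$ with $f_1^n(v)=\infty$) and the $c$-flowers (vertices $v$ with $f_1^n(v)=-1$) yields $\ord(a)=\alpha_{f_1}(\infty)=2$ and $\ord(c)=\alpha_{f_1}(-1)=3$.

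The step I would treat most carefully --- and the only genuine obstacle --- is the claim that $b$ rotates each flower by exactly one tile, so that it acts as a \emph{full} $d_v$-cycle rather than as some proper power of a rotation; this is what forces each cycle length to equal $d_v$, and hence the order to be the full least common multiple rather than a proper divisor of it. This follows from the geometry of the lift: $b$ is a small positively oriented loop encircling the single postcritical point $1$, so near a type-$b$ vertex $v$ the map $f_1^n$ is conjugate to $z\mapsto z^{d_v}$, and consequently the $f_1^n$-lift of $b$ starting in $X_j$ subtends an angle $2\pi/d_v$ about $v$ and terminates in the adjacent white tile $X_{j+1}$, exactly as depicted in Figure~\ref{fig:img_flower}.
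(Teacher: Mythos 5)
Your proof is correct and follows essentially the same route as the paper: both arguments reduce the order of a generator to the cycle structure of its action on the white $n$-tiles, observe that the cycle lengths are exactly the degrees of the corresponding flowers, and identify the resulting least common multiple with the value of the ramification function. Your extra remark justifying that $b$ rotates each flower by exactly one tile (so the cycles are full $d_v$-cycles) is a point the paper establishes beforehand in its description of the iterated monodromy action in Section~\ref{sec:tiles-flow-iter} and then uses without further comment.
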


\begin{proof}
  If $k= \alpha_{f_1}(1) =24$, then $k$ is a multiple of the
  degree of each $b$-flower. Hence $b^k$ acts trivially on each white $n$-tile, $n\in\N_0$. This means that $b^k=1$ in $\img(f_1)$. Conversely, if
  $k\geq 1$ is not a multiple of $24$, there is a $b$-flower
  $W^n(v)$ whose degree $d_v$ does not divide $k$. Then $b^k$ does not act trivially on the white $n$-tiles in $W^n(v)$, so $b^k\neq 1$. Thus $\ord(b) = \alpha_{f_1}(1) = 24$. The argument is
  completely analogous for the generators $a$ and $c$.
\end{proof}

\begin{figure}
  \centering
  \begin{overpic}
    [width=7.5cm, tics=10,
    ]{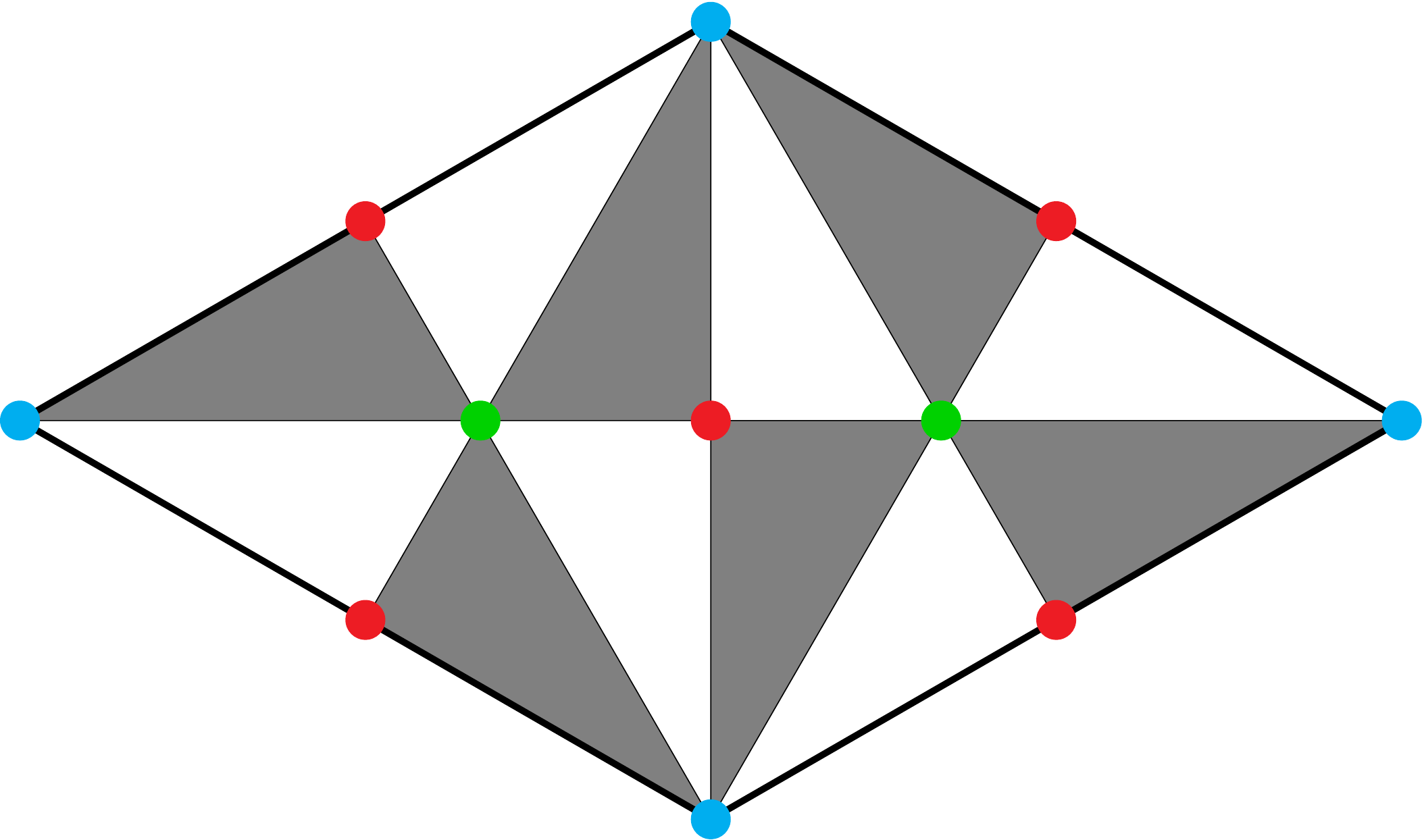} 
    \put(97.5,23.3){$1$}
    \put(52,56.5){$\infty$}
    \put(52,-.5){$\infty$}
    \put(-2.5,23.3){$-1$}
    \put(75,33){$1$}
    \put(56,33){$2$}
    \put(64,16){$3$}
    \put(18,23){$4$}
    \put(43,21){$5$}
    \put(33,40){$6$}
  \end{overpic}
  \caption{Labeling of the $1$-tiles of $f_1$.}
  \label{fig:def_label}
\end{figure}

Let us now give the \emph{wreath recursions} of the generators $a,b,c$, see Appendix A.1. To do this we
label the white $1$-tiles as
indicated in Figure~\ref{fig:def_label}. 
Let
$t_j\in f_1^{-1}(t)$ be the preimage of $t$ contained in the
$1$-tile labeled by $j\in \{1,\dots,6\}$. Note that $f_1([-\infty,-1] \cup [1,\infty])\subset [1,\infty]$, that is, $[-\infty,-1] \cup [1,\infty]$ forms a forward-invariant tree in $\CDach$ joining the postcritical points of $f_1$. This allows us to naturally choose \emph{connecting paths} from the basepoint $t$ to the points in $f_1^{-1}(t)=\{t_1,\dots,t_6\}$ and define a \emph{labeling} on the dynamical preimage tree $\bT_{f_1}$. Namely, we connect $t$ to
$t_j$, $j\in \{1,\dots,6\}$, by a path $\ell_j$ that does not intersect $[-\infty,-1] \cup
[1,\infty] \subset \widehat{\R}\subset \CDach$, meaning that $\ell_j$ stays
in the interior of the domain in
Figure~\ref{fig:def_label}. These choices define the label of every vertex of $\bT_{f_1}$ uniquely by iterative lifting of the paths $\ell_1, \dots, \ell_6$, see
\cite[Chapter 5.2]{Nekra} for more details. With respect to this labeling the action of $\img(f_1)$ on the regular rooted tree $\bT_{f_1}$ becomes self-similar, see Definition \ref{def:self-similar_group}. The wreath recursions of $a$, $b$, 
and $c$ are then given by
\begin{align}
  \label{eq:wreath_f}
  a&=\llangle b^{-1},1, b,c^{-1},1,c\rrangle \;(1 3) (2 5) (4 6)
  \\
  \notag
  b&=\llangle b, b^{-1},1, c, c^{-1}, 1 \rrangle \; (2 3 5 6)
  \\
  \notag
  c&= (1 2 3) (4 5 6).
\end{align}
We will however not need the
wreath recursions to prove exponential growth of $\img(f_1)$.


\section{Exponential growth of $\img(f_1)$}
\label{sec:exponential-growth-r}

In this section we prove that the iterated monodromy group of the
map $f_1$ has exponential growth. More precisely, we construct a
free semigroup inside $\img(f_1)$. We remind the reader that a
more classical, a la \emph{Basilica}, proof which only uses
computations with the wreath recursions \eqref{eq:wreath_f} can be
found in Appendix A.2.

Recall that the map $f_1$ has three postcritical points $-1$, $1$, and $\infty$, which we
call $0$-vertices, that divide $\widehat{\R}$ into the three $0$-edges $[-1,1]$, $[1,\infty]$, and $[-\infty,-1]$. The
reader is advised to consult Figure~\ref{fig:deff}. The $0$-edge $[1,\infty]$ will be of special importance to us.

\begin{lemma}
  \label{lem:deg_a_flower}
  Any $a$-flower of level $n\geq 1$ has degree $2$.
\end{lemma}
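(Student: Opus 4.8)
The plan is to compute the flower degree $d_v = \deg(f_1^n, v)$ directly, using multiplicativity of the local degree along the forward orbit of the flower's center $v$. Write $v_k := f_1^k(v)$, so $v_0 = v$ and, since $v$ is an $n$-vertex of type $a$, we have $v_n = \infty$. The chain rule for local degrees gives
\[
  \deg(f_1^n, v) = \prod_{k=0}^{n-1} \deg(f_1, v_k).
\]
Because $\deg(f_1, p) \geq 2$ precisely when $p \in \crit(f_1)$, proving the lemma amounts to controlling which critical points the orbit $v_0, \dots, v_{n-1}$ meets, and showing the product collapses to a single factor $2$.

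First I would record that $f_1^{-1}(\infty) = \{0, a_0, a_1\}$. The ramification portrait \eqref{eq:rami_f_triag_snow} exhibits these three points mapping to $\infty$, each with local degree $2$, and since their local degrees already sum to $\deg(f_1) = 6$ there can be no further preimages. As $n \geq 1$, the penultimate point $v_{n-1}$ lies in $f_1^{-1}(\infty)$, so $\deg(f_1, v_{n-1}) = 2$. It then remains to show that the earlier points $v_0, \dots, v_{n-2}$ contribute only factors of $1$, i.e.\ that none of them is critical.

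The heart of the argument is to exploit the collapsing behaviour of the postcritical set. Here $\post(f_1) = \{-1, 1, \infty\}$ satisfies $f_1(\post(f_1)) = \{1\}$ with $1$ a fixed point, so once the orbit enters $\post(f_1)$ at some step $j$ it is pinned to $1$ thereafter. Hence if $v_j \in \post(f_1)$ for some $j < n$ we would get $v_n = 1 \neq \infty$, a contradiction; therefore $v_0, \dots, v_{n-1} \notin \post(f_1)$. Now if some $v_k$ with $k \leq n-2$ were critical, then $v_{k+1} = f_1(v_k) \in f_1(\crit(f_1)) \subseteq \post(f_1)$ with $k+1 \leq n-1 < n$, again contradicting the previous sentence. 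Thus $\deg(f_1, v_k) = 1$ for every $k \leq n-2$, and combining with the previous paragraph yields $\deg(f_1^n, v) = \deg(f_1, v_{n-1}) = 2$.

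I expect the only delicate points to be the two structural facts read off the ramification portrait: that $f_1^{-1}(\infty) = \{0, a_0, a_1\}$ (which uses the degree count $\deg(f_1)=6$), and that $\post(f_1)$ collapses to the fixed point $1$ in one step. Everything else is a routine application of multiplicativity of local degrees. An alternative, purely combinatorial route would be to count directly the white $n$-tiles incident to $v$ from the tile subdivision, but the orbit computation seems shorter and more transparent, and it makes the contrast with the $b$- and $c$-flowers (whose degrees involve the other critical values) immediately visible.
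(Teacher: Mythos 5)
Your proof is correct and follows essentially the same route as the paper's: the paper's two-line argument (every preimage of $\infty$ is a critical point of local degree $2$ that is not postcritical, hence the orbit of an $n$-vertex of type $a$ meets exactly one critical point) is precisely what you spell out via multiplicativity of local degrees. The only cosmetic difference is how you rule out earlier critical points in the orbit — you use that $\post(f_1)$ collapses to the fixed point $1$, while the paper implicitly uses that the preimages of $\infty$ are not postcritical together with forward invariance of $\post(f_1)$; both are valid and equally short.
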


\begin{proof}
  From \eqref{eq:rami_f_triag_snow} we see that every preimage
  of $\infty$ is a critical point of local degree $2$, which is
  not a postcritical point. The statement follows.
\end{proof}

Note that Lemma \ref{lem:deg_a_flower} is stronger than the statement
that $\ord(a)=2$, since the latter does not rule out $a$-flowers of degree $1$. 
 
\begin{lemma}[{Flowers on $[1,\infty]$}]
  \label{lem:flowers_1_infty}
  Let $n\in\N_0$. The $0$-edge $[1,\infty]$ has the following properties.
 \begin{enumerate}[label=(\arabic*),font=\normalfont]
  \item 
    \label{item:flowers_1_inf1}
     $f_1([1,\infty]) =
    [1,\infty]$. Consequently, the $0$-edge $[1,\infty]$ is (forward)
    \emph{$f_1$-invariant}.
  \item 
    \label{item:flowers_1_inf2}
    For every $n$-vertex $v\in (1,\infty)$ of type $b$ the
    degree of $W^n(v)$ is $8$. The degree of the $b$-flower
    $W^n(1)$ is $1$.
  \item
    \label{item:flowers_1_inf3} 
   There are exactly $2^{n}+1$ $n$-vertices on $[1,\infty]$. Moreover, their type  alternates between $b$ and $a$.
\longhide{    Along $[1,\infty]$ the type of $n$-vertices alternates
    between $b$ and $a$. There are no $n$-vertices of type $c$ on
    $[1,\infty]$.} 
  \item
    \label{item:flowers_1_inf4} 
    For any $n$-vertex $v\in (1,\infty)$, the number of
    white $n$-tiles in the flower $W^n(v)$ that are
    contained in $\XOw$ equals
    the number of white $n$-tiles in $W^n(v)$ that are
    contained in $\XOb$.
    %
  \item 
    \label{item:flowers_1_inf5}
    For any $n$-vertex $v\in [1,\infty)$ there is a unique
    white $n$-tile $X(v)\in W^n(v) \cap \XOw$ that intersects
    $\widehat{\R}$ in an $n$-edge.
  \end{enumerate}
\end{lemma}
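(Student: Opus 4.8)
The plan is to extract first the one-dimensional dynamics of $f_1$ on the invariant arc $[1,\infty]$ and then read off the flower statements (4)--(5) from symmetry and parity. I would begin with (1). Since $f_1$ is real we have $f_1(\widehat{\R})\subseteq\widehat{\R}$, and since $\post(f_1)=\{-1,1,\infty\}$ is forward invariant $f_1$ sends $0$-vertices to $0$-vertices, hence each $0$-edge onto a union of $0$-edges. From the ramification portrait \eqref{eq:rami_f_triag_snow}, together with the normalisation of the construction, the unique preimage of a $0$-vertex lying in $(1,\infty)$ is the type-$a$ point $a_0$, the single pole of $f_1$ there, with $f_1(a_0)=\infty$ of local degree $2$; as $f_1(1)=f_1(\infty)=1$, the $1$-vertices on $[1,\infty]$ are exactly $\{1,a_0,\infty\}$. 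The two $1$-edges $[1,a_0]$, $[a_0,\infty]$ each map homeomorphically onto a $0$-edge (the cell-homeomorphism property, cf.\ \cite[Proposition~5.17]{THEbook}), which must be $[1,\infty]$ since their endpoints go to $\{1,\infty\}$. Hence $f_1([1,\infty])=[1,\infty]$, and $f_1|_{[1,\infty]}$ is a degree-two ``tent'' self-cover folding at $a_0$ ($1\mapsto1$, $a_0\mapsto\infty$, $\infty\mapsto1$). In particular the fixed point $1$ has local degree $1$, so $\deg(f_1^n,1)=1$, giving the second claim of (2).

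For (3) I would induct on $n$ using this tent map, whose $(n+1)$-vertices are the preimages of the $n$-vertices. Each of the $2^n-1$ interior $n$-vertices has two preimages, the endpoint $1$ has the two preimages $\{1,\infty\}$, and the endpoint $\infty$ has the single preimage $a_0$; these preimage sets are pairwise disjoint, so the count grows from $2^n+1$ to $2(2^n-1)+2+1=2^{n+1}+1$, starting from $\{1,\infty\}$ at level $0$. For the alternation I would note that every $n$-edge on $[1,\infty]$ is mapped by $f_1^n$ homeomorphically onto the $0$-edge $[1,\infty]$ (by invariance and the cell-homeomorphism property), whose endpoints $1,\infty$ have the distinct types $b,a$; thus the two endpoints of each $n$-edge have distinct types, forcing the types to alternate along $[1,\infty]$ and showing in passing that type $c$ never occurs.

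The technical heart is the degree formula in (2). For a type-$b$ $n$-vertex $v\in(1,\infty)$ the forward orbit $v=v_0,\dots,v_n=1$ stays in $[1,\infty]$, and $\deg(f_1^n,v)=\prod_{i=0}^{n-1}\deg(f_1,v_i)$, where inside $[1,\infty]$ the only points of local degree $>1$ are $a_0$ (degree $2$) and $\infty$ (degree $4$). Let $k$ be the first time the orbit hits the fixed point $1$. Reading backwards and using that within $[1,\infty]$ the only preimage of $1$ besides $1$ is $\infty$, the only preimage of $\infty$ is $a_0$, and the preimages of $a_0$ are two non-critical interior points, I would show the orbit meets $\infty$ exactly once (at $v_{k-1}$) and $a_0$ exactly once (at $v_{k-2}$) and is otherwise non-critical, so that $\deg(f_1^n,v)=4\cdot 2=8$. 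I expect this bookkeeping --- ruling out a second passage through $\infty$ or $a_0$ --- to be the main obstacle; it is handled by the implications ``$v_i=\infty\Rightarrow v_{i+1}=1$'' and ``$v_i=a_0\Rightarrow v_{i+1}=\infty$'', which pin down the tail of the orbit uniquely.

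Finally I would obtain (4) and (5) from symmetry and parity. The reflection $\sigma(z)=\overline{z}$ commutes with $f_1$, fixes $v$, swaps $\XOw$ with $\XOb$ and white tiles with black tiles; mapping the upper half of $W^n(v)$ onto the lower half, it shows each half contains exactly $d_v$ of the $2d_v$ tiles. For $v\in(1,\infty)$ the degree $d_v$ is $2$ (type $a$, Lemma~\ref{lem:deg_a_flower}) or $8$ (type $b$, by (2)), hence even, so the $d_v$ alternately coloured tiles of each half split into $d_v/2$ white and $d_v/2$ black, proving (4). For (5), among the upper tiles only the first and last of the fan meet $\widehat{\R}$ in an $n$-edge (the intermediate ones meet $\widehat{\R}$ only in the point $v$); as $d_v$ is even these two have opposite colours, so exactly one is white and yields the unique $X(v)$. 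At the endpoint $v=1$, where $d_v=1$, there is a single upper tile, which meets $\widehat{\R}$ and is white because $f_1^n$ maps $[1,\infty]$ onto itself increasingly near the fixed point $1$ (so $(f_1^n)'(1)>0$) and, being holomorphic, preserves the upper half-plane there; this settles (5).
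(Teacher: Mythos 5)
Your proposal is correct and follows essentially the same route as the paper's proof: the tent-map structure of $f_1$ on $[1,\infty]$ with the single folding point $a_0$, the ramification portrait giving $\deg(f_1^n,v)=\deg(f_1,a_0)\deg(f_1,\infty)=8$ for interior type-$b$ vertices, and the realness of $f_1$ combined with the parity of the flower degree for parts (4) and (5). You merely supply more detail than the paper does (the explicit induction for (3), the backward-orbit bookkeeping for (2), and the separate treatment of $v=1$ in (5)); the underlying argument is identical.
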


\begin{figure}
  \centering 
  \begin{overpic} 
    [width=12cm, 
    tics =10]{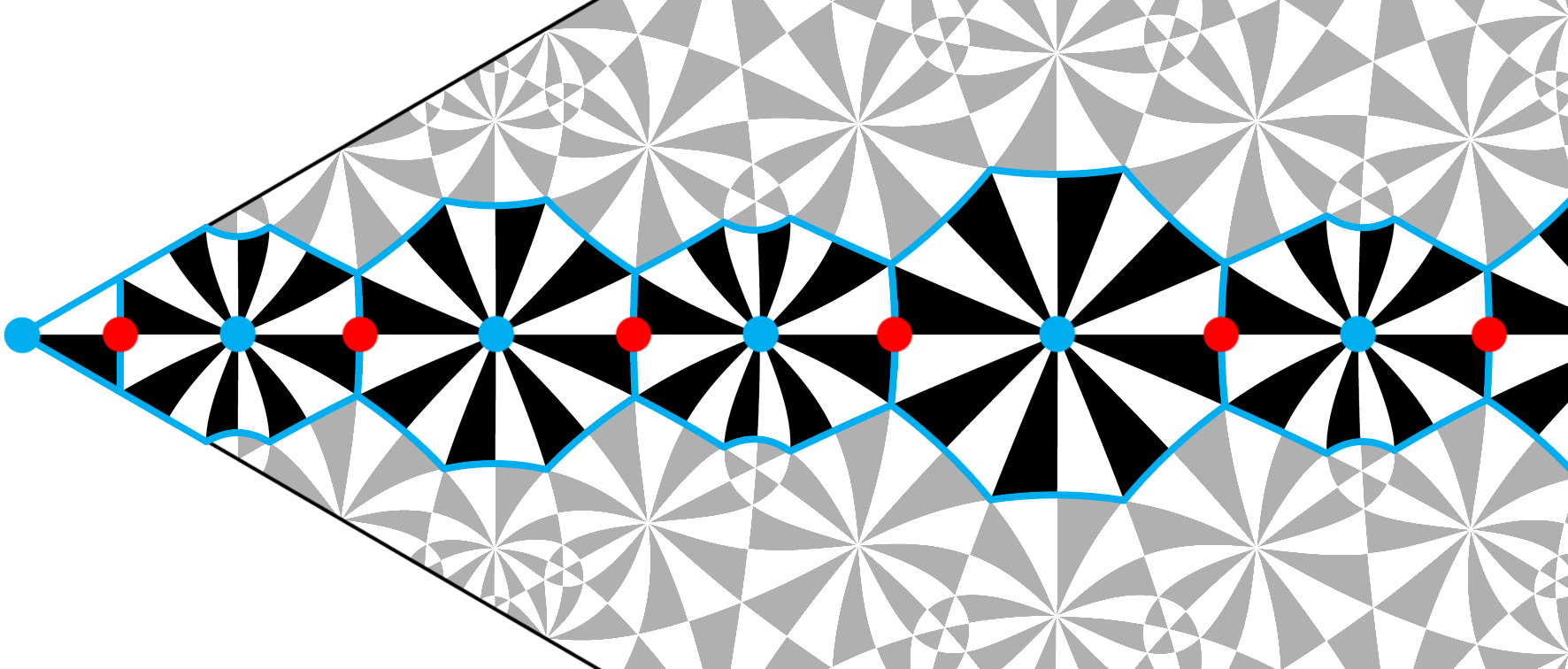}
    \put(0,17){$1$}
    \put(103,20){$\cdots \;\infty$}
  \end{overpic}
  \caption{$b$-flowers on $[1,\infty]$.}
  \label{fig:bflower_1inf}
\end{figure}

The situation is illustrated in
Figure~\ref{fig:bflower_1inf}. 
The $n$-vertices on $[1,\infty]$ of type
$b$ are marked as blue dots, the ones of type $a$ as red dots. 
Also the $b$-flowers on $[1,\infty]$ are outlined in blue.\footnote{For
purely aesthetic reasons we have cut the Riemann sphere $\CDach$
along $[-\infty, 1]$ and applied a $6$-th root, that is, the picture
shows the tiles after applying the map $z\mapsto (z-1)^{1/6}$. This
ensures that the $b$-flowers on $[1,\infty]$ have roughly the
same size.}

\longhide{
Recall that $\XOw$ is the closure of the upper half-plane,
$\XOb$ the closure of the lower half-plane. So
\eqref{item:flowers_1_inf4} says that $W^n(v)$ contains as many
white $n$-tiles above the real line as below.
It is easy to show that there are in fact $2^{n-1} +1$
$n$-vertices of type $b$ on $[1,\infty]$ and $2^{n-1}$
$n$-vertices of type $a$ on $[1,\infty]$, though this precise
number will not be relevant for us. 
}

\begin{proof}\mbox{}

  \ref{item:flowers_1_inf1} 
  Note that $[1,\infty] = [1,a_0] \cup [a_0, \infty]$, and
  $f_1$ maps $[1,a_0]$ as well as $[a_0,\infty]$ homeomorphically
  to $[1,\infty]$, see Figure~\ref{fig:deff}. 

  \smallskip \ref{item:flowers_1_inf2} Note that $(1,\infty)$
  contains no $1$-vertices of type $b$ (that is, there is no point
  $v\in (1,\infty)$ with $f(v)=1$). The only $2$-vertex of type
  $b$ on $(1,\infty)$ is $a_0$ (which satisfies $f_1(a_0)=\infty$
  and $f_1^2(a_0)=1$).  It follows from
  \ref{item:flowers_1_inf1} that for any $n$-vertex
  $v\in (1,\infty)$ of type $b$ the orbit
  $v, f_1(v), f_1^2(v),\dots, f_1^n(v)=1$ contains exactly $2$
  critical points of $f_1$, namely $a_0$ and $\infty$. Thus
  $\deg(f_1^n,v) = \deg(f_1,a_0)\deg(f_1,\infty)=8$ is the degree
  of $W^n(v)$ as desired. Clearly, $\deg(f_1^n,1) =1$ for all
  $n\in \N_0$.

  \smallskip
  \ref{item:flowers_1_inf3} The statement follows from the
  description above and an elementary induction. 

  \smallskip
  \ref{item:flowers_1_inf4}
Recall that $\XOw$ and $\XOb$ are the closures of the upper and lower half-planes, respectively. So we need to show that each flower $W^n(v)$, $v\in (1,\infty)$, contains as many
white $n$-tiles above the real line as below.
  
  First note that $f_1$ is a real
  function, meaning that $\overline{f_1(z)} =
  f_1(\overline{z})$.
  Thus the $n$-tiles are symmetric with respect to the real
  axis. \hide{More precisely, the reflection of a white $n$-tile along the real axis is a black $n$-tile and vice versa.} Since $\deg(f_1^n,v)$ is even, the number of $n$-tiles in $W^n(v)$ below and above the real line is the same even number. As colors of tiles around $v$ alternate the statement follows.


  \smallskip   
  \ref{item:flowers_1_inf5} The statement follows from the above considerations.
\end{proof}

\begin{cor}
  \label{cor:ab4_inf_order}
  $ab^4$, $ab^{12}$, and $ab^{20}$ are distinct elements of infinite order in $\img(f_1)$.
\end{cor}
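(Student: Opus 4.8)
The plan is to settle distinctness for free and then prove infinite order by analyzing, at every level $n$, how each element permutes a distinguished chain of white $n$-tiles running along the invariant edge $[1,\infty]$. Distinctness is immediate: since $\ord(b)=24$ by Lemma~\ref{lem:order_alpha}, an equality $ab^{i}=ab^{j}$ would force $24\mid(i-j)$, but the pairwise differences of $4,12,20$ are $8$ and $16$, neither divisible by $24$. So $ab^{4},ab^{12},ab^{20}$ are pairwise distinct, and it remains to show each has infinite order. The idea is to exhibit, for each $n$, a single $2^{n}$-cycle in the action on white $n$-tiles; as $n$ is arbitrary, this forces infinite order.

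First I would set up the chain. By Lemma~\ref{lem:flowers_1_infty} the edge $[1,\infty]$ is $f_{1}$-invariant and carries exactly $2^{n}+1$ $n$-vertices $1=v_{0},v_{1},\dots,v_{2^{n}}=\infty$ of alternating type, say $v_{i}$ of type $b$ for $i$ even and type $a$ for $i$ odd. The interior $b$-vertices have flower degree $8$ and the $a$-vertices degree $2$ (Lemma~\ref{lem:flowers_1_infty}\ref{item:flowers_1_inf2} and Lemma~\ref{lem:deg_a_flower}), while the endpoints are exceptional: $W^{n}(1)$ has degree $1$ and $W^{n}(\infty)$ has degree $4$. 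For each of the $2^{n}$ $n$-edges $e_{i}=[v_{i-1},v_{i}]$ let $M_{i}$ be the unique white $n$-tile incident to $e_{i}$; these are $2^{n}$ distinct white tiles. The two tiles $M_{i},M_{i+1}$ are precisely the white tiles of $W^{n}(v_{i})$ meeting $\widehat{\R}$, and by Lemma~\ref{lem:flowers_1_infty}\ref{item:flowers_1_inf4} (equally many white tiles above and below the real axis) they sit diametrically opposite in the cyclic order of white tiles around $v_{i}$, i.e.\ exactly $d_{v_{i}}/2$ apart.

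Next I would read off the two local actions, using that each generator rotates the white tiles of a flower by one step, so rotation by $d_{v}/2$ interchanges any pair of diametrically opposite white tiles. Thus $a$ (rotation by $1=2/2$ at the degree-$2$ $a$-vertices) gives $a=(M_{1}M_{2})(M_{3}M_{4})\cdots(M_{2^{n}-1}M_{2^{n}})$ on the chain, while $b^{4}$ (rotation by $4=8/2$ at the interior degree-$8$ $b$-vertices) interchanges $M_{i}\leftrightarrow M_{i+1}$ there and is trivial at the endpoints, since $b$ is trivial on $W^{n}(1)$ and $b^{4}=1$ on the degree-$4$ flower $W^{n}(\infty)$; hence $b^{4}=(M_{2}M_{3})(M_{4}M_{5})\cdots(M_{2^{n}-2}\,M_{2^{n}-1})$, fixing $M_{1},M_{2^{n}}$. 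Composing in the order of the right action ($a$ first, then $b^{4}$) yields $M_{2j-1}^{\,ab^{4}}=M_{2j+1}$ and $M_{2j}^{\,ab^{4}}=M_{2j-2}$, with boundary values $M_{2^{n}-1}^{\,ab^{4}}=M_{2^{n}}$, $M_{2^{n}}^{\,ab^{4}}=M_{2^{n}-2}$, $M_{2}^{\,ab^{4}}=M_{1}$, $M_{1}^{\,ab^{4}}=M_{3}$, which assemble into the single cycle
\[
M_{1}\to M_{3}\to\cdots\to M_{2^{n}-1}\to M_{2^{n}}\to M_{2^{n}-2}\to\cdots\to M_{2}\to M_{1}
\]
of length $2^{n}$.

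It would then follow that $(ab^{4})^{m}\neq 1$ in $\img(f_{1})$ for $1\le m<2^{n}$, and letting $n\to\infty$ gives $\ord(ab^{4})=\infty$. The same argument applies verbatim to $ab^{12}$ and $ab^{20}$: because $12\equiv 20\equiv 4\pmod 8$ and $12\equiv 20\equiv 0\pmod 4$, the powers $b^{12},b^{20}$ act on the chain exactly as $b^{4}$ (rotation by $4$ at the degree-$8$ vertices, trivially at the endpoints), so they induce the same $2^{n}$-cycle and are likewise of infinite order. The one genuinely delicate step is the diametric-opposition claim for the degree-$8$ $b$-flowers — that $M_{i},M_{i+1}$ lie exactly $4$ apart among the $8$ white tiles of $W^{n}(v_{i})$ — which is where the reflection symmetry of $f_{1}$ across $\widehat{\R}$ enters via Lemma~\ref{lem:flowers_1_infty}\ref{item:flowers_1_inf4}; the rest is just careful bookkeeping at the two exceptional endpoint flowers.
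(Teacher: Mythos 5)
Your proof is correct and follows essentially the same route as the paper: distinctness from $\ord(b)=24$, and infinite order by showing that $ab^{4}$ (and likewise $ab^{12}$, $ab^{20}$, since $12\equiv 20\equiv 4 \pmod 8$) shifts distinguished white $n$-tiles along the invariant edge $[1,\infty]$, using Lemmas~\ref{lem:deg_a_flower} and \ref{lem:flowers_1_infty} together with the ``rotation in flowers'' description of the action. The only difference is bookkeeping: the paper tracks the tiles $X(v)$ of Lemma~\ref{lem:flowers_1_infty}\ref{item:flowers_1_inf5} attached to the $2^{n-1}$ type-$b$ vertices and stops before reaching $\infty$, whereas you track one white tile per $n$-edge and also resolve the behaviour at the endpoint flowers $W^{n}(1)$ and $W^{n}(\infty)$, obtaining the full $2^{n}$-cycle.
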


\begin{proof}
Since $\ord(b)=24$ it follows that  $ab^4$, $ab^{12}$, and $ab^{20}$ are distinct in $\img(f_1)$.

Let $n\geq 2$ be an integer. Consider an $n$-vertex $v\in [1,\infty)$ of type $b$ and the white $n$-tile $X(v)$ as in
Lemma~\ref{lem:flowers_1_infty}\ref{item:flowers_1_inf5}. Let $v'$ be the $n$-vertex of type $b$ to the right
of $v$ on $[1,\infty]$ and $X(v')\subset W^n(v') \cap \XOw$ be the white
$n$-tile according to Lemma~\ref{lem:flowers_1_infty}\ref{item:flowers_1_inf5}. In the latter we assume that $v'$ is different from $\infty$. Then by the description of the iterated monodromy action on tiles from Section \ref{sec:tiles-flow-iter} as well as Lemmas~\ref{lem:deg_a_flower} and \ref{lem:flowers_1_infty} it follows that $ab^4$ maps $X(v)$
to $X(v')$. Put differently, $ab^4$ ``shifts white $n$-tiles in $\XOw$ on the $0$-edge $[1,\infty)$ to
the right''. \hide{ (and white $n$-tile in $\XOb$ to the left)} Since the degree of $W^n(v')$ is $8$, it follows that $ab^{12}$ and $ab^{20}$ act on $X(v)$ in exactly the same way as $ab^4$, that is, by shifting $X(v)$ to $X(v')$ on the right. 

Note that by Lemma~\ref{lem:flowers_1_infty}\ref{item:flowers_1_inf3} there are $2^{n-1}$ $n$-vertices of type $b$ on $[1,\infty)$. From the above considerations, the elements $ab^4$, $ab^{12}$, and $ab^{20}$ are of infinite order in $\img(f_1)$.
\end{proof}

\begin{lemma}
  \label{lem:free_semi_gp}
  The elements $ab^4$, $ab^{12}$, and $ab^{20}$ generate a free
  semigroup in $\img(f_1)$.
\end{lemma}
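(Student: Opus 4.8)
The plan is to show that distinct positive words in the letters $g_1\coloneqq ab^4$, $g_2\coloneqq ab^{12}$, $g_3\coloneqq ab^{20}$ represent distinct elements of $\img(f_1)$; throughout I work with the right action of $\img(f_1)$ on white tiles from Section~\ref{sec:tiles-flow-iter}. First I would separate the two ingredients these generators carry. By Lemmas~\ref{lem:deg_a_flower} and \ref{lem:flowers_1_infty} together with the computation in the proof of Corollary~\ref{cor:ab4_inf_order}, all three generators act on the distinguished base tiles $X(v_0),X(v_1),\dots$ along $[1,\infty)$ by the same one-step right shift $X(v_i)\mapsto X(v_{i+1})$. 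Hence a positive word of length $k$ sends $X(v_0)$ to $X(v_k)$, so its action records its length, and any two words representing the same element must have the same length.

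Next I would pin down where the three generators actually differ. Writing $g_2=g_1b^8$ and $g_3=g_1b^{16}$, they differ only by powers of $b^8$. Since $b$ cyclically rotates the $d_v$ white tiles of a $b$-flower of degree $d_v$ and $\ord(b)=24$ (Lemma~\ref{lem:order_alpha}), the element $b^8$ acts trivially on every $b$-flower of degree $1$, $4$, or $8$, whereas on a $b$-flower of degree $3$ the powers $b^4$, $b^{12}$, $b^{20}$ induce the three \emph{distinct} rotations $b$, $\mathrm{id}$, $b^2$ of a $3$-cycle. Thus degree-$3$ $b$-flowers are precisely the places where $g_1,g_2,g_3$ can be told apart. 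By the ramification portrait \eqref{eq:rami_f_triag_snow} the critical points $c_0,c_1$ give $c_i\mapsto -1\mapsto 1$ and hence degree-$3$ $b$-flowers already at level $2$, and taking $f_1^m$-preimages produces such flowers at every level.

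With these observations I would reduce freeness to a ping-pong that reads off one extremal letter. The goal is to build three pairwise disjoint nonempty families of white tiles $\Omega_1,\Omega_2,\Omega_3$, assembled from tiles inside degree-$3$ $b$-flowers of increasing level and nested along the preimage tree, so that $x^{g_l}\in\Omega_l$ for every $x\in\Omega\coloneqq\Omega_1\cup\Omega_2\cup\Omega_3$ and every $l$. Granting this, for any $x\in\Omega$ and any positive word $w=s_1\cdots s_k$ the right action gives $x^{w}\in\Omega_{l}$, where $l$ is the index of the \emph{last} letter $s_k$, so disjointness of the $\Omega_l$ determines $s_k$. Freeness then follows by induction on length: if two distinct positive words were equal in the group, their last letters would coincide by disjointness, right cancellation in the group would strip them, and one repeats until either the last letters genuinely disagree (impossible) or a nonempty positive word equals the identity (also impossible, since such a word moves any $x\in\Omega_m$ with $m\neq$ its last letter out of $\Omega_m$).

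The hard part will be the construction of the nested families $\Omega_1,\Omega_2,\Omega_3$: one must organize the degree-$3$ $b$-flowers of all levels into a ternary branching pattern mirroring the free semigroup, using the self-similarity of the tiling together with the three rotations identified above, and then verify the containment $x^{g_l}\in\Omega_l$ with the $\Omega_l$ genuinely disjoint. The subtle point is non-interference across scales---the common right shift must not undo the separation produced by the rotations at finer levels---which is exactly the bookkeeping the self-similar structure is designed to control. This is also where the alternative argument of Appendix~A.2, a direct computation with the wreath recursions \eqref{eq:wreath_f}, provides an independent verification.
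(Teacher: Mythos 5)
Your two structural observations are exactly the right ones and match the paper's: along $[1,\infty)$ all three generators act by the same one-step shift (Corollary~\ref{cor:ab4_inf_order}), and the only $b$-flowers on which $b^4,b^{12},b^{20}$ act pairwise differently are those of degree $3$, which occur at $c_0$ (and its preimages). But the reduction you propose --- a semigroup ping-pong with pairwise disjoint families $\Omega_1,\Omega_2,\Omega_3$ satisfying $x^{g_l}\in\Omega_l$ for all $x\in\Omega$, so that the action reads off the \emph{last} letter --- is precisely the part you leave unconstructed, and it cannot be constructed in the form stated. If $x\in\Omega$, disjointness of the $\Omega_l$ forces $x^{g_1},x^{g_2},x^{g_3}$ to be pairwise distinct, which (since $g_l=ab^{k_l}$ and the possible flower degrees are $1,3,4,8$) forces $x^a$ to lie in a degree-$3$ $b$-flower; the three images are then the three white tiles of that flower. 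For $\Omega$ to be closed under the action, each of \emph{those} tiles must again have its $a$-image in a degree-$3$ $b$-flower. But the next $b$-vertex along each of the three lifted arcs emanating from a degree-$3$ vertex is a preimage of an interior $b$-vertex of $[1,\infty]$, whose flower degree is (a multiple of) $8$ by Lemma~\ref{lem:flowers_1_infty}\ref{item:flowers_1_inf2}; there all three generators act identically, so the three images of such a tile coincide and would have to lie in $\Omega_1\cap\Omega_2\cap\Omega_3=\emptyset$. Passing to finer levels does not help, since the action preserves each level of $\bT_{f_1}$ and the obstruction recurs level by level. So the "bookkeeping the self-similar structure is designed to control" is not bookkeeping --- it is an impossibility.

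The paper's proof avoids this by reading off the \emph{first} differing letter rather than the last. Given two distinct words, it left-cancels the common prefix, so WLOG $w_1=b^{k_1}ab^{k_2}\cdots$ and $w_2=b^{m_1}ab^{m_2}\cdots$ with $k_1\neq m_1$, and applies both to a single white $n$-tile $X$ in $W^n(c_0)$ (degree $3$, so $b^{k_1}$ and $b^{m_1}$ send $X$ to tiles adjacent to \emph{different} arcs among the three lifts $A_1,A_2,A_3$ of $[1,\infty]$ at $c_0$). The remaining suffixes --- on which the generators may well agree --- only ever \emph{shift} tiles outward along whichever arc they are on, so the separation achieved at the first step is never undone, and for $n$ with $2^{n-3}>N$ the two final tiles lie on distinct arcs. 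No closure property of a ping-pong table is needed; one only needs the single degree-$3$ flower at $c_0$ together with the fact that the shift dynamics on the three arcs are disjoint. You would need to replace your third paragraph with this prefix-cancellation argument (or carry out the wreath-recursion computation of Appendix~A.2, which reads off the first coordinate $w|_1$ instead) for the proof to go through.
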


Put differently, we consider the words of the form
\begin{equation}
  \label{eq:words_free_semi}
  ab^{k_1} ab^{k_2} \dots ab^{k_N},
\end{equation}
where $N\in \N_0$ and $k_j\in\{4,12,20\}$ for $j=1,\dots,N$. We
will show that if two such words are distinct, then they are distinct
as elements of $\img(f_1)$.

\begin{figure}
  \centering
  \begin{subfigure}[b]{0.4\textwidth}
    \centering
    \begin{overpic}
      [width=5.5cm,
      tics=10]{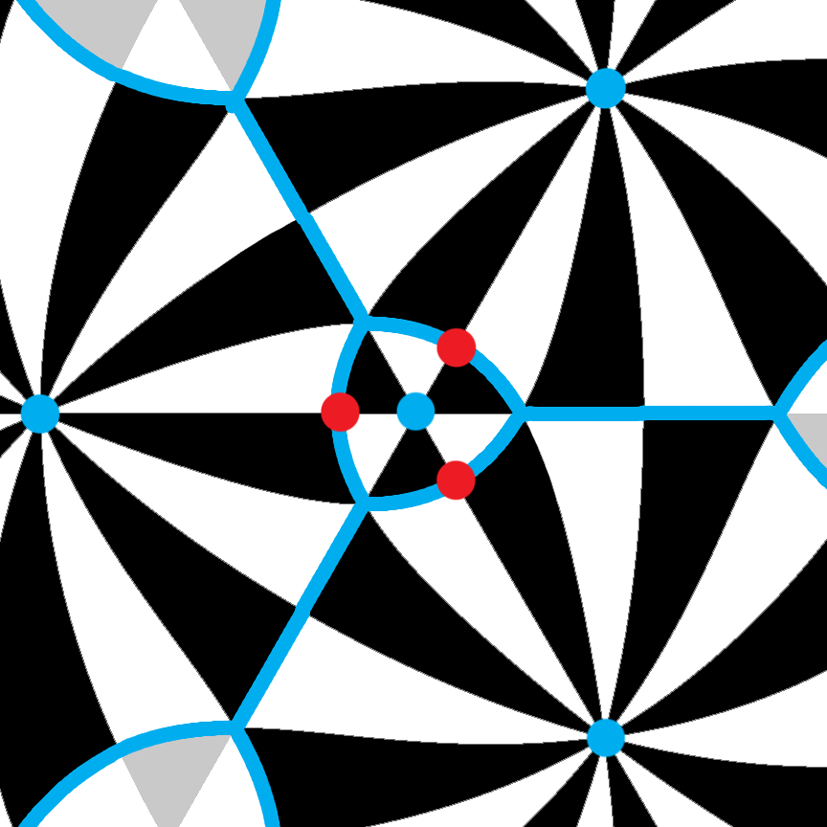}
    \end{overpic}
    \caption{The flower centered at $c_0$.}
    \label{fig:c0closeup}
  \end{subfigure}
  \phantom{XXX}
  \begin{subfigure}[b]{0.4\textwidth}
    \centering
    \begin{overpic}
      [width=5.5cm,
      tics=10]{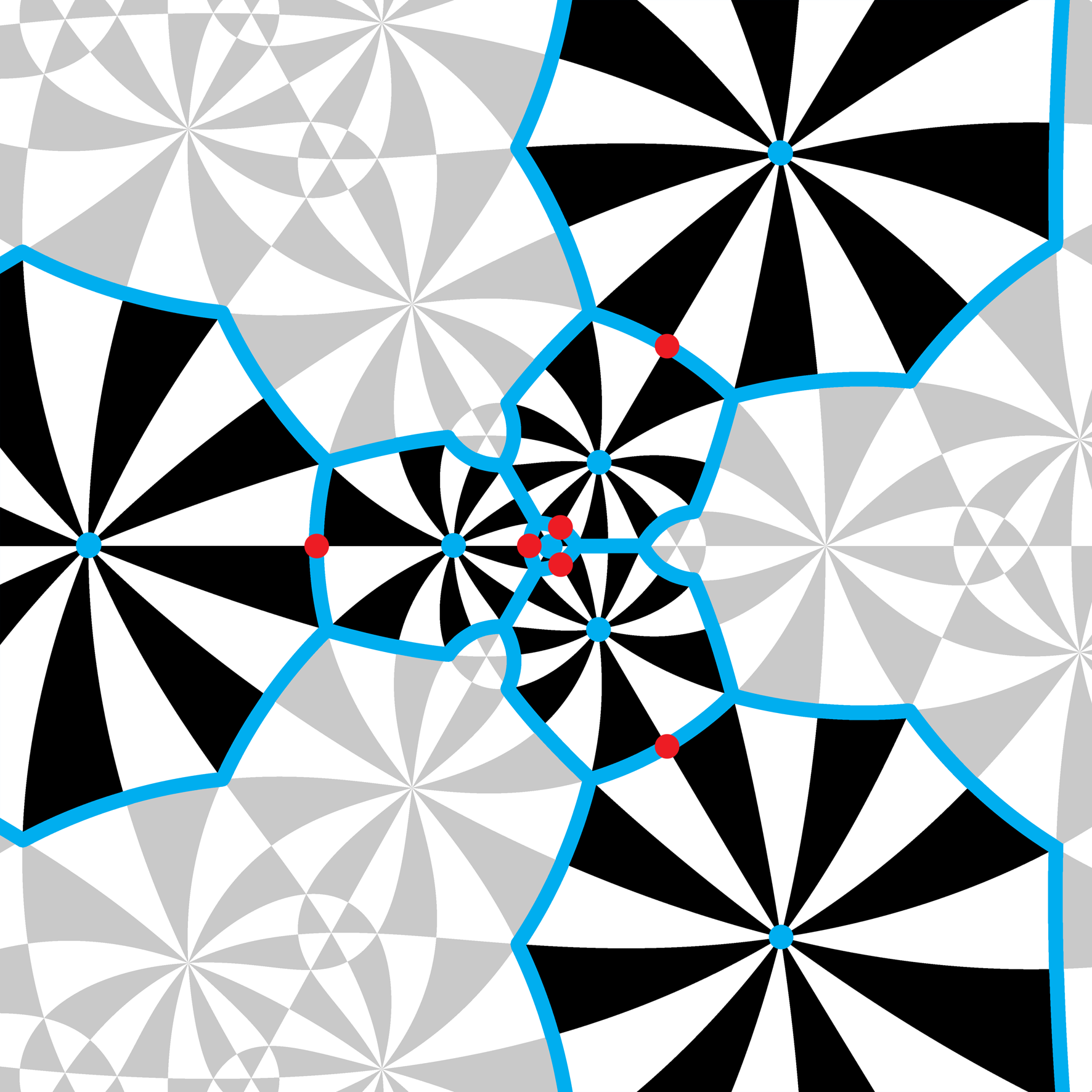}
    \end{overpic}
    \caption{The $b$-flowers around $c_0$.}
    \label{fig:c0flowers}
  \end{subfigure}
  \caption{Flowers around $c_0$.}
\end{figure}

\begin{proof}
  Suppose that $n\geq 2$ is an integer. Let us consider the critical point $c_0$, see
  Figure~\ref{fig:deff} and \eqref{eq:rami_f_triag_snow}. Note
  that $f_1^n(c_0) = 1$ and $\deg(f_1^n,c_0) = 3$. Thus, $c_0$ is an $n$-vertex of type $b$ and the degree of
  $W^n(c_0)$ is $3$. Figure~\ref{fig:c0closeup} 
  shows such a flower $W^n(c_0)$. 

  Note that the $0$-edge $[1,\infty]$ has three $f_1^2$-preimages  at
  $c_0$. More precisely, there are three analytic closed arcs
  $A_1,A_2,A_3$, such that they intersect (pairwise) precisely in $c_0$ and
  $f_1^2\colon A_j \to [1,\infty]$ is a homeomorphism for
  $j=1,2,3$. Denote by $A_j'$ the arc $A_j$ with the endpoint
  different from $c_0$ removed.  
  
  Let $j\in\{1,2,3\}$. It follows that on each arc $A_j'$ the combinatorial
  picture of level $n$ is the same as on $[1,\infty)$ for level $n-2$ (the latter one is described by
  Lemma~\ref{lem:flowers_1_infty}). The only difference is that the degree of $W^n(c_0)$ is 3, while the degree of $W^n(1)$ is 1. In particular, we can associate to each $n$-vertex $v\in A_j'$ of type $b$ a white $n$-tile $X(v) \subset W^n(v)$, so that $X(v)$ is mapped by $ab^k$ to $X(v')$, for $k=4,12,20$. Here, $v'$ is the next
  $n$-vertex of type $b$ on $A_j$ that follows $v$, when traversing $A_j$
  starting from $c_0$. \hide{This is true here even for the endpoint of $A_j$, corresponding to $\infty$ (unlike in the previous lemma).} Figure~\ref{fig:c0flowers} shows the
  $b$-flowers on the arcs $A_1,A_2,A_3$ for sufficiently large $n$
  (Figure~\ref{fig:c0closeup} is a close-up of
  Figure~\ref{fig:c0flowers}). For the convenience of the reader
  we also show a subgraph of the $n$-level Schreier graph in
  Figure~\ref{fig:Schreier_c0}, which may be viewed as a
  schematic version of Figure~\ref{fig:c0flowers}.

\begin{figure}
  \centering
  \begin{overpic}
    [width=7.5cm, tics=10,
    ]{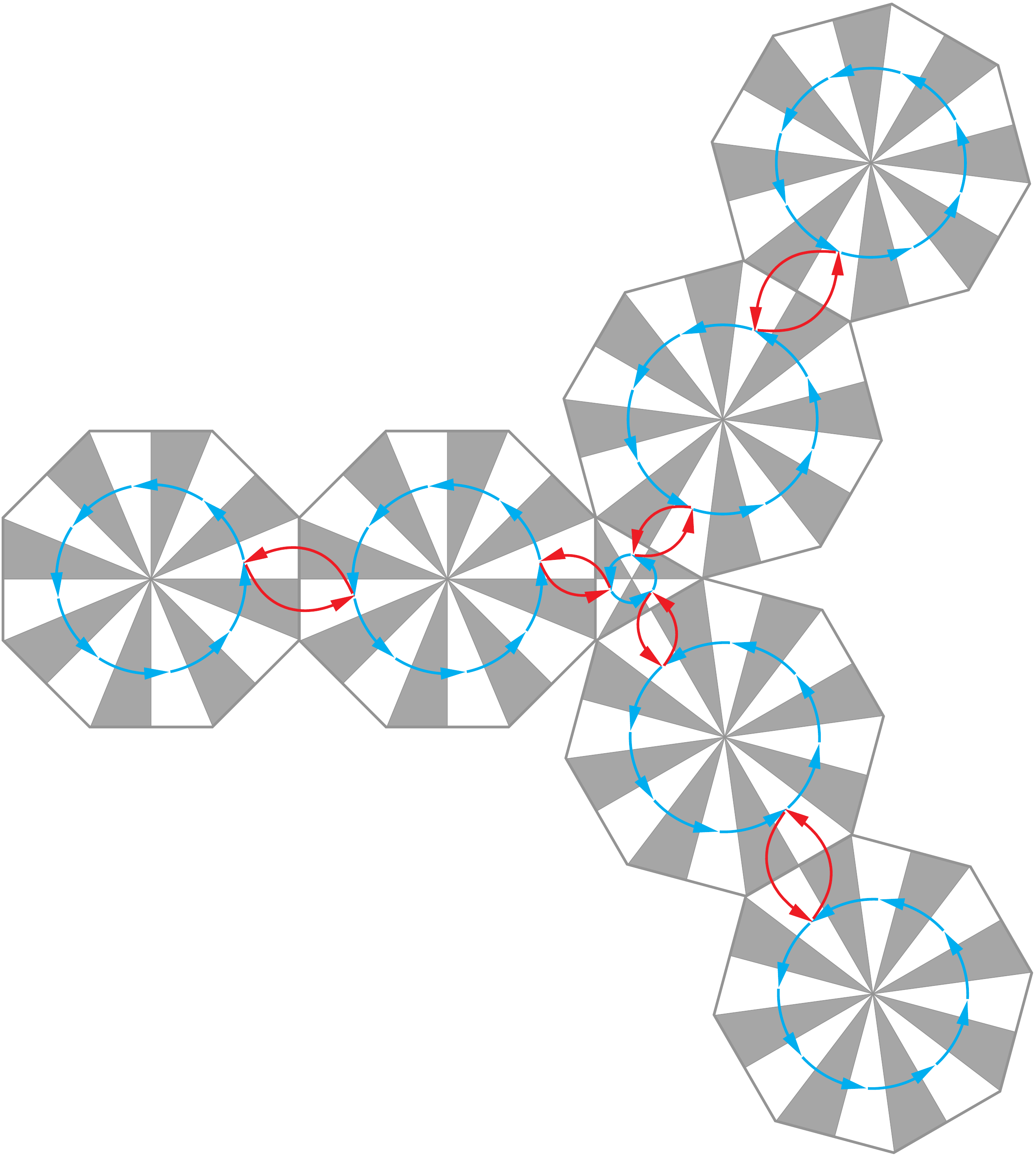}
    \put(-5, 48.7){$\cdots$}
    \put(82.7,-1.7){\rotatebox{120}{$\cdots$}}
    \put(82.7,97.5){\rotatebox{60}{$\cdots$}}
  \end{overpic}
  \caption{A subgraph of the Schreier graph at $c_0$.}
  \label{fig:Schreier_c0}
\end{figure}

\longhide{ 
  In particular, for each $n$-vertex $v\in A_j'$ of type $b$,
  there is a white $n$-tile $X(v) \subset W^n(v)$ that is mapped
  by $ab^k$ to $X(v')$, for $k=4,12,20$. Here, $v'$ is the next
  $n$-vertex of type $b$ on $A_j$ that follows $v$, when traversing $A_j$
  starting from $c_0$. Figure~\ref{fig:c0flowers} shows the
  $b$-flowers on the arcs $A_1,A_2,A_3$
  (Figure~\ref{fig:c0closeup} is a closeup of
  Figure~\ref{fig:c0flowers}). 
}

  \smallskip
  Consider now two distinct words $w_1$ and $w_2$ of the form
  \eqref{eq:words_free_semi}. By multiplying from the left with
  the inverse of the common initial word, it is enough to assume
  that 
  \begin{equation*}
    w_1 = b^{k_1} a b^{k_2} \dots a b^{k_{N}},
    \quad 
    w_2 = b^{m_1} a b^{m_2} \dots a b^{m_{N}},
  \end{equation*}
  where $k_1,\dots, k_{N}, m_1,\dots , m_{N}\in\{4,12,20\}$, and
  $k_1 \neq m_1$. Fix a sufficiently large $n$ (in fact it will be enough to
  demand that $2^{n-3} > N$). Fix one white $n$-tile $X$ in
  $W^n(c_0)$. Let us apply the two words $w_1$ and $w_2$ to $X$. Note that $b^{k_1}$ and $b^{m_1}$ map this tile to
  distinct $n$-tiles $X_1$ and $X_2$ in $W^n(c_0)$, since $8$ is not a multiple of $3$. We conclude that the remaining subwords $a b^{k_2} \dots a
  b^{k_N}$ and $a b^{m_2} \dots a b^{m_N}$ of $w_1$ and $w_2$ shift these
  $n$-tiles along two distinct arcs among
  $A_1,A_2,A_3$. Thus $w_1$ and $w_2$ map $X$ to distinct
  $n$-tiles.  Consequently, $w_1$ and $w_2$ are distinct elements in
  $\img(f_1)$.

  The previous argument can be applied to show that distinct words
  of the form \eqref{eq:words_free_semi} are distinct in $\img(f_1)$
  even if they have different length. However, we note that there is a special case when after cancellation of the common initial part of the two given words we are left with $w_1=1$ and $w_2\neq 1$. In this case $w_1$ fixes the $n$-tile $X$ selected above, while $w_2$ shifts it along one of the arcs $A_1,A_2,A_3$. Thus $w_1$ and $w_2$ are distinct elements in
  $\img(f_1)$.  Hence we have proved that the elements $ab^4, ab^{12}, ab^{20}$ generate a free
  semigroup in $\img(f_1)$. 
\end{proof}

From the previous lemma it follows immediately that $\img(f_1)$ is
of exponential growth. This proves
Theorem~\ref{thm:f1_exp_growth}.






\section{A criterion for exponential growth}
\label{sec:crit-expon-growth}

Here we analyze the essential ingredients used in
Section~\ref{sec:exponential-growth-r} to prove exponential
growth of the iterated monodromy group of the map $f_1$. This will
result in a somewhat general sufficient condition for the $\img$ of a
Thurston map to be of exponential growth. However, we point out that this criterion is far from being necessary. Moreover, the imposed conditions on the Thurston map can be further relaxed.

Let $g\colon S^2\to S^2$ be a Thurston map (see
Definition~\ref{def:Thurston-map}). We fix a Jordan curve
$\CC\subset S^2$ with $\post(g) \subset \CC$. As in Section \ref{sec:tiles-flow-iter}, the
$0$-edges are the closed arcs into which $\post(g)$ divides
$\CC$. For the map $f_1$ we considered the $0$-edge $[1,\infty]$
which was $f_1$-invariant. So in general we demand that
\begin{align}
  \label{eq:E_ginv}
  \tag{a}
  &\text{ there is a $g$-invariant $0$-edge $E$ with endpoints
    $p,q\in \post(g)$}
  \\
  \notag
  &\text{ such that $g|E\colon E\to E$ is not a homeomorphism.}
\end{align}

Put differently, $g(E)= E$ and $d_E\coloneqq \deg(g|E)\geq 2$. 
 It follows that the $0$-edge $E$ is
subdivided into $d_E^n$ $n$-edges by $n$-vertices for each $n\in \N_0$.

\hide{Recall that this means that $g(E)=E$.} Further, we assume that
\begin{equation}
  \label{eq:gpp}
  \tag{b}
  g(p)=p.
\end{equation}
Note that from $g(E)=E$ it follows that
$g(\{p,q\})\subset \{p,q\}$, which means that at least one of the
points $p,q$ is a fixed point of $g^2$. So the main purpose of
condition \eqref{eq:gpp} is to fix the notation, since $p$ and
$q$ will play somewhat different roles.

Let $v$ be an $n$-vertex on $E$ for some $n\in \N_0$, meaning that $v\in g^{-n}(\post(g))\cap E$. Since $E$ is $g$-invariant, it
follows that either $g^n(v)=p$ or $g^n(v)=q$. In the first case we say that the $n$-vertex $v$
is of type $p$, otherwise we say that it is of
type $q$. We again note that $v$ is an $(n+1)$-vertex as well, and
as such may be of different type. Recall that each $n$-edge $e$ in $E$ is mapped by
$g^n$ homeomorphically to $E$. If follows that one endpoint of $e$ is an
$n$-vertex of type $p$ and the other one is of type $q$. Put
differently, the types of the $n$-vertices on $E$ alternate.

\hide{For any $n\in \N_0$, the $0$-edge $E$ is
subdivided into $n$-edges. For any $n\in \N_0$ let
$v_1,\dots,v_N$ be the $n$-vertices on $E$, meaning the points in
the set $g^{-n}(\post(g))\cap E$. Since $E$ is $g$-invariant, it
follows that any such $n$-vertex $v_j$ satisfies $g^n(v_j)=p$ or
$g^n(v_j)=q$. In the first case we say that the $n$-vertex $v_j$
is of type $p$, otherwise we say that the $n$-vertex $v_j$ is of
type $q$. Note that $v_j$ will be an $(n+1)$-vertex as well, and
as such may be of different type. 
Recall that the $n$-vertices divide $E$ into the (closed)
$n$-edges contained in $E$. Each such $n$-edge $e$ is mapped by
$g^n$ homeomorphically to $E$. If follows that $e$ has one
$n$-vertex as its endpoint of type $p$ and one of type $q$. Put
differently, the types of $n$-vertices on $E$ alternates. 
}

The second essential ingredient for the example $f_1$ was that
for all $n$-vertices $v\in (1,\infty)$ of a fixed type the degrees
of the flowers $W^n(v)$ were the same. In general we demand that there are
constants $k_p, k_q\in \N$, such that for each $n\in\N$ every $n$-vertex
$v\in E\setminus\{p,q\}$ of type $p$, and every $n$-vertex
$w\in E\setminus\{p,q\}$ of type $q$, satisfies
\begin{equation}
  \label{eq:deggv}
  \tag{c}
  \deg(g^n,v) = k_p \text{ and } \deg(g^n, w) = k_q.
\end{equation}

\smallskip

There is an elementary way to check that condition
\eqref{eq:deggv} is true. Let $V_p\coloneqq g^{-1}(p)\cap (E
\setminus \{p,q\})$ and $V_q\coloneqq g^{-1}(q)\cap (E\setminus
\{p,q\})$. These are the $1$-vertices on $E\setminus\{p,q\}$ of
type $p$ and $q$ respectively. We first note that
(\ref{eq:deggv}) implies that there are constants $k_p,k_q\in \N$
with 
\begin{equation}
  \label{eq:c1}
  \tag{c1}
  k_p= \deg(g,v) \text{ and } k_q=\deg(g,w)
\end{equation}
for all $v\in V_p$ and $w\in V_q$.  

Let $v\in
E\setminus\{p,q\}$ be an $n$-vertex
of type $p$, then by \eqref{eq:gpp} it is an $(n+1)$-vertex of type $p$ as
well. Assuming (\ref{eq:deggv}) it follows that
\begin{equation*}
  k_p= \deg(g^{n+1},v) = \deg(g^n,v)\deg(g,p) = k_p \deg(g,p).
\end{equation*}
Thus (\ref{eq:deggv}) implies
\begin{equation}
  \label{eq:c2}
  \tag{c2}
  \deg(g,p)=1.
\end{equation}

The other $0$-vertex $q$ satisfies either $g(q)=q$ or $g(q)=p$.  
In the first case we obtain from an exactly analogous
argument that (\ref{eq:deggv}) implies
\begin{equation}
  \label{eq:c3}
  \tag{c3}
  \deg(g,q)=1  \quad\text{ in the case }\quad g(q)=q.
\end{equation}
Furthermore, in this case the three conditions (\ref{eq:c1}),
(\ref{eq:c2}), (\ref{eq:c3}) imply (\ref{eq:deggv}). 
Indeed, for any $n$-vertex $v\in E\setminus\{p,q\}$ of
type $p$ the sequence $v,
g(v),\dots,g^n(v)=p$ contains exactly one critical point of $g$,
which is contained in $V_p$, and thus has local degree
$k_p$. Hence 
$\deg(g^n,v)= k_p$. Similarly, $\deg(g^n,w)= k_q$ for each
$n$-vertex $w\in E\setminus\{p,q\}$ of type $q$. 

Let us now consider the second case $g(q)=p$. Suppose $w\in
V_q$. Then $g^2(w) = p$. From (\ref{eq:deggv}) it follows that
$k_p=\deg(g^2,w)=\deg(g,w)\deg(g,q)= k_q \deg(g,q)$. Thus we
obtain the condition
\begin{equation}
  \label{eq:c4}
  \tag{c4}
  k_p= k_q \deg(g,q)
  \quad \text{ in the case }\quad
  g(q)=p.
\end{equation}
Conversely, in this case the conditions (\ref{eq:c1}),
(\ref{eq:c2}), (\ref{eq:c4}) imply that for any $n$-vertex
$v\in E\setminus\{p,q\}$ of type $p$, the sequence
$v,g(v), \dots,g^n(v)=p$ contains either exactly one point in
$V_p$ or the point $q$ and exactly one point in $V_q$. In both situations
$\deg(g^n,v)=k_p$. The argument that $\deg(g^n,w)=k_q$ for each
$n$-vertex $w\in E\setminus\{p,q\}$ of type $q$ is the same as in
the first case.

In conclusion, we have seen that
\begin{equation*}
  (\ref{eq:c1}),~(\ref{eq:c2}),~(\ref{eq:c3}),~(\ref{eq:c4}) 
  \Leftrightarrow
  (\ref{eq:deggv}). 
\end{equation*}

In practice we often consider the \emph{ramification portrait of
  $g$ restricted to $E$}. Let
$v\in V_p\cup V_q\subset E\setminus \{p,q\}$. Since $v$ is
incident to two $1$-edges contained in $E$, both of which are
mapped to $E$, it follows that $v$ is a critical point of $g$. Thus the
ramification portrait of $g$ does indeed contain all $1$-vertices
in $E$, and we may restrict it to the set of these vertices. We
note that for a (critical) point $v\in V_p\cup V_q\cup\{p,q\}$ of degree
$d_v=\deg(g,v)$ we label the directed edge from $v$ to $g(v)$ by
``$d_v:1$''. Note that $d_v$ differs in general
from $\deg(g|E,v)$ (which is always $2$ for $v\in V_p\cup V_q$).

For example, the
ramification portrait of $f_1$ restricted to $[1,\infty]$ is
\begin{equation*}
  \xymatrix{
    a_0 \ar[r]^{2:1} & \infty \ar[r]^{4:1} & 1\ar@(ru,lu)[]\rlap{.}
  }
\end{equation*}
Given the restricted ramification portrait of a $g$-invariant
$0$-edge, it is immediate to verify that conditions
(\ref{eq:c1}),~(\ref{eq:c2}),~(\ref{eq:c3})~(\ref{eq:c4}) (as
well as (\ref{eq:gpp})) are satisfied.

\smallskip
Condition (\ref{eq:deggv}) is not yet sufficient to ensure that a
suitable word (which was $ab^4$ for the example $f_1$) acts by
``shifting white $n$-tiles along $E$''. 
Let $v\in E\setminus\{p,q\}$ be an $n$-vertex. Then there are $n$-edges $e,e'\subset E$ that
intersect in $v$. Then any $n$-tile in the flower $W^n(v)$ of level $n$ centered at $v$ is
either contained in the sector between $e$ and $e'$ or in the
sector between $e'$ and $e$. We refer to these two sectors as the
\emph{sectors into which $E$ divides $W^n(v)$}. 
In this setting we demand that
\begin{align}
  \label{eq:bw_above_below}
  \tag{d}
  &\text{for each $n$-vertex $v\in E\setminus\{p,q\}$ the
    two sectors into which $E$ } 
  \\
  \notag
  &\text{divides $W^n(v)$ contain the \emph{same number of}
    $n$-tiles.}  
\end{align}

\longhide{
\begin{align}
  \label{eq:bw_above_below}
  \tag{d}
  &\text{for any $n$-vertex $v\in E\setminus\{p,q\}$ each of the
    two sectors into} 
  \\
  \notag
  &\text{which $E$ divides $W^n(v)$ contains an \emph{equal number of}
    $n$-tiles.}  
\end{align}
}


Let $d_v = \deg(g^n,v)$. Then there are $2d_v$ 
$n$-edges that contain $v$. Let $e_0, e_1,\dots , e_{2d_v-1}$ be
these $n$-edges labeled cyclically around $v$ so that $e=e_0\subset E$. Then
\eqref{eq:bw_above_below} is equivalent to the requirement that
$e'=e_{d_v}\subset E$.

Since $E$ is invariant, $g^n(e_0)=g^n(e_{d_v})=E$. At the same time $g^n$ maps $e_j$ to $E$
if and only if $j$ is even. It follows that $d_v$ is even. Consequently, conditions \eqref{eq:E_ginv} and \eqref{eq:bw_above_below} imply that the two sectors into which $E$ divides $W^n(v)$ contain an (equal) even number of
$n$-tiles. 
Thus the numbers $k_p$ and $k_q$ from
condition~\eqref{eq:deggv} are even.

Let us color the $0$-tiles black and white. As in Section
\ref{sec:tiles-flow-iter}, the dynamics of $g$ defines a coloring
on the $n$-tiles respecting the above choice for all $n\in\N$.
The colors of the $n$-tiles containing $v$ (that is, the
$n$-tiles in the flower $W^n(v)$) alternate cyclically around
$v$. From the discussion above, it follows that conditions \eqref{eq:E_ginv} and \eqref{eq:bw_above_below} mean that the numbers of white
$n$-tiles in the two sectors into which $E$ divides $W^n(v)$  are
equal. 

\longhide{Let $d_v = \deg(g^n,v)$. Then there are $2d_v$ 
$n$-edges that contain $v$. Let $e=e_0, e_1,\dots , e_{2d_v-1}$ be
these $n$-edges, labeled cyclically around $v$. Then
\eqref{eq:bw_above_below} is equivalent to the requirement that
$e'= e_{d_v}$ (meaning the two sectors into which $E$ divides
$W^n(v)$ contain an equal number of $n$-edges).

Note that $g^n(e)=E$. Consequently $g^n$ maps $e_j$ to $E$
exactly when $j$ is even (and to the other $0$-edge containing
$g^n(v)\in \{p,q\}$ when $j$ is odd). It follows that
\eqref{eq:bw_above_below} implies that $d_v$ is
even. This means that the numbers $k_p$ and $k_q$ from
condition~\eqref{eq:deggv} are even. 

Furthermore the two sectors
into which $E$ divides $W^n(v)$ contain an (equal) even number of
$n$-tiles.

Let us color the $0$-tiles black and white. As in Section
\ref{sec:tiles-flow-iter}, the dynamics of $g$ defines a coloring
on the $n$-tiles respecting the above choice for all $n\in\N$.
The colors of the $n$-tiles containing $v$ (that is, the
$n$-tiles in the flower $W^n(v)$) alternate cyclically around
$v$. Since each of the two sectors between $e$ and $e'$ contains
an even number of $n$-tiles, it follows that condition \eqref{eq:bw_above_below} means that the numbers of white
$n$-tiles in the two sectors between $e$ and $e'$ in $W^n(v)$ are
equal. 
}


Note that \eqref{eq:bw_above_below} is automatically satisfied if $E\subset \widehat{\R}$ and $g$ is a real rational function, that is, $g(\RDach)\subset \RDach$.



As for condition \eqref{eq:deggv}, there is an equivalent
condition that only involves $1$-vertices, $1$-tiles, and 
$1$-flowers. More precisely, we demand that
\begin{align}
  \label{eq:d'}
  \tag{d'}
  &\text{for each $1$-vertex $v\in E\setminus\{p,q\}$ the
    two sectors into which $E$} 
  \\
  \notag
  &\text{divides $W^1(v)$ contain the \emph{same number of}
    $1$-tiles.}  
\end{align}

\longhide{
\begin{align}
  \label{eq:d'}
  \tag{d'}
  &\text{for any $1$-vertex $v\in E\setminus\{p,q\}$ each of the
    two sectors into} 
  \\
  \notag
  &\text{which $E$ divides $W^1(v)$ contains an \emph{equal number of}
    $1$-tiles.}  
\end{align}
}


\begin{lemma}
  \label{lem:div_sector_1v_nv}
  Let $E$ be a $g$-invariant $0$-edge with the endpoints $p$ and $q$ as in
  \eqref{eq:E_ginv}. Then condition \eqref{eq:bw_above_below} is
  equivalent to condition~\eqref{eq:d'}. 
\end{lemma}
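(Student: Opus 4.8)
The plan is to prove the two implications separately. One direction is immediate: taking $n=1$ in \eqref{eq:bw_above_below} gives exactly \eqref{eq:d'}, so \eqref{eq:bw_above_below}$\Rightarrow$\eqref{eq:d'}. The substance is the converse, which I would establish by induction on $n$, using the characterization recorded just above: for an $n$-vertex $v\in E\setminus\{p,q\}$, condition \eqref{eq:bw_above_below} at $v$ is equivalent to the two $n$-edges of $E$ incident to $v$ being \emph{antipodal} among the $2d_v$ edges of $W^n(v)$, that is $e'=e_{d_v}$, where $d_v=\deg(g^n,v)$. The base case $n=1$ is \eqref{eq:d'} itself. For the inductive step I would fix an $n$-vertex $v\in E\setminus\{p,q\}$ with $n\ge 2$ and split according to whether $v$ is a critical point of $g$. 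Since $g(\crit(g))\subset\post(g)$, the critical points lying on $E\setminus\{p,q\}$ are precisely the $1$-vertices $V_p\cup V_q$.

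In the first case, $v\notin V_p\cup V_q$, so $\deg(g,v)=1$, and moreover $u:=g(v)\in E\setminus\{p,q\}$ (it lies on $E$ by invariance, and $u\in\{p,q\}$ would force $v\in V_p\cup V_q$). Then $g$ maps $W^n(v)$ homeomorphically onto $W^{n-1}(u)$, carrying $n$-tiles to $(n-1)$-tiles bijectively and the two $E$-edges at $v$ onto the two $E$-edges at $u$; hence it maps the two sectors into which $E$ divides $W^n(v)$ bijectively onto the two sectors at $u$, preserving tile counts. By the induction hypothesis \eqref{eq:bw_above_below} holds for the $(n-1)$-vertex $u$, so the two sectors at $u$, and therefore those at $v$, carry equally many $n$-tiles.

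The second case, $v\in V_p\cup V_q$ with $\delta:=\deg(g,v)\ge 2$, is the heart of the matter: now $g(v)\in\{p,q\}$, the one-step pushforward lands at an endpoint, and the sector argument of the first case breaks down. Here I would argue directly from the invariance of $E$. Because $E$ is $g$-invariant, its subdivision by $n$-vertices refines its subdivision by $1$-vertices, so the $n$-edge of $E$ leaving $v$ on either side is contained in the corresponding $1$-edge of $E$ and points in the same direction at $v$; thus the two $E$-edges of $W^n(v)$ occupy the same two directions at $v$ as the two $E$-edges of $W^1(v)$. Now \eqref{eq:d'} says the latter are antipodal among the $2\delta$ edges of $W^1(v)$, which in a local coordinate $\zeta$ at $v$ in which $g$ has the monomial form $\zeta\mapsto\zeta^{\delta}$ means that the two tangent directions of $E$ at $v$ are opposite. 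Choosing the coordinate compatibly, so that $g^n$ has the normal form $\zeta\mapsto\zeta^{d_v}$, the $2d_v$ edges of $W^n(v)$ are the rays at angles $\pi j/d_v$, and the two opposite $E$-directions are exactly the rays $j=0$ and $j=d_v$. Hence the two $E$-edges at $v$ are antipodal, i.e. \eqref{eq:bw_above_below} holds at $v$. (Note $v\in V_p\cup V_q$ is an $n$-vertex for every $n$, since $\{p,q\}$ is $g$-invariant, so this is valid at all levels.)

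I expect the genuine obstacle to be precisely this second case: at a critical point of $g$ on $E$ the map folds and sends the flower onto one centered at an endpoint, where $E$ contributes only a single edge and no longer bisects the image flower, so \eqref{eq:bw_above_below} cannot simply be transported from a lower level. The resolution is to bypass the dynamics for one step and read antipodality at level $n$ off level $1$, using that the invariant arc $E$ has well-defined tangent directions at $v$ independent of the level together with the local monomial form of the iterate $g^n$. I would also note explicitly that only \eqref{eq:E_ginv} and \eqref{eq:gpp} are used — and in particular not the degree hypothesis \eqref{eq:deggv} — since the equivalence of \eqref{eq:bw_above_below} and \eqref{eq:d'} should be insensitive to the latter.
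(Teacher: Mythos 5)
Your overall architecture --- the trivial direction, plus an induction that pushes a non-critical $n$-vertex forward one step and treats the critical points of $g$ on $E\setminus\{p,q\}$ (equivalently, the $1$-vertices $V_p\cup V_q$) by a separate direct argument --- is essentially the paper's proof: the paper pushes forward by $g^j$ all at once to reach a $1$-vertex and then argues directly there. Your Case~1 is correct. The problem is in Case~2, which you rightly identify as the heart of the matter.

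The gap is the transfer of ``antipodality'' from level $1$ to level $n$. You establish that the two arcs of $E$ at $v$ are ``opposite'' in a coordinate where $g=\zeta^{\delta}$, and then assert that in a ``compatibly chosen'' coordinate where $g^{n}=\zeta^{d_v}$ they occupy the rays $j=0$ and $j=d_v$. But the two normal-form coordinates at $v$ are related only by a homeomorphism (the setting is topological Thurston maps, and even for rational $g$ the curve $\CC$ need not be smooth at $v$ or meet itself at angle $\pi$ at $p$), so ``angle $\pi$ apart'' does not transport from one to the other; a coordinate in which \emph{both} the $2\delta$ $1$-edges and the $2d_v$ $n$-edges are equally spaced rays need not exist. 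Concretely: the positions of the $1$-edges among the cyclically ordered $n$-edges at $v$ are governed by how the $2\deg(g^{n-1},p)$ $(n-1)$-edges at $p$ distribute between the two $0$-tiles at $p$, and nothing in \eqref{eq:E_ginv}--\eqref{eq:d'} controls this (conditions \eqref{eq:bw_above_below} and \eqref{eq:d'} explicitly exclude $p$ and $q$). If, say, one $0$-tile at $p$ contains no $(n-1)$-edge at $p$ in its interior, then for $\delta=2$ the four $1$-edges sit at positions $0,1,d_v,d_v+1$ among the $2d_v$ $n$-edges --- nowhere near equally spaced --- so your angular picture is simply wrong there, even though the conclusion (the two $E$-edges sit at $0$ and $d_v$) still holds. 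What makes it hold, and what your argument is missing, is the paper's counting step: each gap between consecutive $1$-edges $e_j,e_{j+1}$ that map to $E$ is carried by $g$ onto one full copy of the flower at $p$, hence contains exactly $2\deg(g^{n-1},p)-1$ $n$-edges incident to $v$, \emph{independently of $j$} and of the (possibly asymmetric) distribution of the $(n-1)$-edges at $p$. That uniform count, combined with \eqref{eq:d'} placing the two $E$-type $1$-edges antipodally among the $e_j$'s, is what yields $e'=e_{d_v}$ at level $n$. You need to supply this count (or an equivalent) to close Case~2.
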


\hide{\textsf{Warning: Here, we use the fact that $E$ is $g$-invariant, otherwise we also need to assume that the black tiles are symmetric!}}

\begin{proof}
  The implication \eqref{eq:bw_above_below} $\Rightarrow$
  \eqref{eq:d'} is trivial. 

  Let us show the other implication. First
  consider a $1$-vertex $v\in E\setminus\{p,q\}$; recall that
  $v$ then is an $n$-vertex as well for all $n\in\N$.

  Since the $0$-edge $E$ is $g$-invariant, $g(v)\in \{p,q\}$. Assume that $g(v)=p$. The
  other case is completely analogous and will not be treated 
  separately.  Let $E$ and $\widetilde{E}$ be the $0$-edges
  incident to $p$. Then the $1$-edges containing $v$ are
  alternatingly mapped to $E$ and $\widetilde{E}$ by $g$. Suppose that the $1$-edges around $v$ are
  $e_0, \widetilde{e}_0, e_1, \widetilde{e}_1, \dots, e_{d_v}=e_0$ in
  cyclic order. Here $d_v= \deg(g,v)$,  $g(e_j)=E$, and
  $g(\widetilde{e}_j) = \widetilde{E}$ for $j=0,\dots, d_{v}-1$. Let
  $e_0 \subset E$, then by conditions \eqref{eq:E_ginv} and \eqref{eq:d'}
  it follows that the other $1$-edge in $E$ containing $v$ is
  $e_{d_{v}/2}$.

  Fix an $n\in \N$ and view the $1$-vertex $v\in E\setminus\{p,q\}$ as an $n$-vertex. 
  Consider all the $(n-1)$-edges containing $g(v)=p$. Denote these $(n-1)$-edges by $K_0,\dots, K_{m-1}$ so that $K_0\subset E$, where $m=2\deg(g^{n-1},g(v))$. Then every $1$-edge $e_j$, defined above,
  contains an $n$-edge $k_{j,0}$ incident to $v$, such that $g(k_{j,0})=K_0$. Moreover, between $e_j$
  and $e_{j+1}$ there are exactly $(m-1)$ $n$-edges containing $v$. It follows that
 the two sectors in $W^{n}(v)$ between the two $n$-edges in $E$ containing $v$, namely $k_{0,0}$ and $k_{d_{v}/2,0}$, contain the same number of $n$-edges, proving the lemma in this case. 

  \smallskip
  Consider now an $n$-vertex $v\in E\setminus\{p,q\}$ that is not
  a $1$-vertex. Then there is a $j\leq n-1$ such that $w\coloneqq
  g^j(v)\in
  E\setminus\{p,q\}$ is a $1$-vertex. Since $w$ is not a
  postcritical point of $g$, it follows that $v$ is not a critical
  point of $g^j$. Thus there is a neighborhood of $v$ on which
  $g^j$ is a homeomorphism. Consequently, $g^j$ maps $n$-edges containing
  $v$ to $(n-j)$-edges containing $w$ bijectively, and at the same time $E$ to $E$. The
  desired statement for $v$ follows now from the corresponding statement for
  the $1$-vertex $w$ already proved above. 
\end{proof}

\smallskip

Finally, we assume the following condition.
\begin{align}
  \label{eq:prep_rel_prime}
  \tag{e}
  &\text{There is a point $c\in S^2$ with $g^k(c)=p$ for a $k\in
    \N$}
  \\
  \notag
  &\text{such that $d_c\coloneqq\deg(g^k, c)$ 
  is not a divisor of $k_p$.}
\end{align}

As for the map $f_1$ we define the elements $a$ and
$b$ of $\img(g)$ as being represented by loops around $q$ and
$p$, respectively. More precisely, we fix a basepoint $t$ in the
interior of the white $0$-tile denoted by $X^0_\wt$. Then the loop $a$ is represented
by connecting $t$ in $X^0_\wt$ to a small circle around $q$. Similarly, the loop
$b$ is represented by connecting $t$ in $X^0_\wt$ to a small circle around $p$.


Assuming conditions \eqref{eq:E_ginv}--\eqref{eq:bw_above_below},
one shows exactly analogous to Corollary~\ref{cor:ab4_inf_order} 
that $a^{k_q/2}b^{k_p/2}$ is of infinite order in $\img(g)$. If we also assume condition \eqref{eq:prep_rel_prime}, it follows 
exactly as in Lemma~\ref{lem:free_semi_gp} that 
$a^{k_q/2}b^{k_p/2}$ and $a^{k_q/2}b^{k_p/2}b^{k_p}$ generate a
free semigroup in $\img(g)$. This means we have proved the
following.

\begin{theorem}
  \label{thm:inf_order_gen}
  Let $g\colon S^2\to S^2$ be a Thurston map, and $\CC\subset
  S^2$ be a Jordan curve with $\post(g) \subset \CC$ that
  satisfies conditions
  \eqref{eq:E_ginv}--\eqref{eq:bw_above_below}. Then $\img(g)$ contains an element of infinite order.  
\end{theorem}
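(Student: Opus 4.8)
The plan is to exhibit the explicit element $h \coloneqq a^{k_q/2}b^{k_p/2}$ and to show, exactly as in Corollary~\ref{cor:ab4_inf_order}, that it acts on the white $n$-tiles --- which we identify with the $n$-th level of the dynamical preimage tree $\bT_g$ --- with orbits whose size is unbounded as $n\to\infty$; this forces $h$ to have infinite order. Recall that $a$ and $b$ act by positive rotation of the white $n$-tiles inside the $a$- and $b$-flowers (the flowers centered at $n$-vertices of type $q$ and $p$, respectively), and that the discussion preceding the theorem shows, using \eqref{eq:bw_above_below}, that the degrees $k_p$ and $k_q$ from \eqref{eq:deggv} are even, so that $k_p/2$ and $k_q/2$ are integers.

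The key step is to identify $b^{k_p/2}$ and $a^{k_q/2}$ with ``half-turns relative to $E$''. Fix $n\in\N$ and an interior $n$-vertex $v\in E\setminus\{p,q\}$ of type $p$. By \eqref{eq:deggv} the $b$-flower $W^n(v)$ has degree $k_p$ and thus contains exactly $k_p$ white $n$-tiles. As explained before the theorem, conditions \eqref{eq:E_ginv} and \eqref{eq:bw_above_below} (equivalently \eqref{eq:d'}, via Lemma~\ref{lem:div_sector_1v_nv}) guarantee that the two $n$-edges in $E$ incident to $v$ are diametrically opposite in the cyclic order of the $n$-edges around $v$; since $k_p$ is even, each of the two sectors into which $E$ divides $W^n(v)$ therefore contains exactly $k_p/2$ white $n$-tiles. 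Consequently $b^{k_p/2}$ interchanges the two sectors; in particular it swaps the two white $n$-tiles meeting $E$, one in each sector. The same reasoning applies to $a^{k_q/2}$ at $n$-vertices of type $q$.

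Next I would track the distinguished tile through $h$. Let $X(v)$ be the unique white $n$-tile in $W^n(v)\cap\XOw$ meeting $E$ in an $n$-edge (the analog of Lemma~\ref{lem:flowers_1_infty}\ref{item:flowers_1_inf5}), say along the $E$-edge joining $v$ to the adjacent type-$q$ vertex $u$. Applying $a^{k_q/2}$ first (we use right actions), the half-turn about $u$ sends $X(v)$ to the white tile meeting the $E$-edge $[u,v']$, where $v'$ is the next type-$p$ vertex along $E$ in the direction away from $p$; applying then $b^{k_p/2}$, the half-turn about $v'$ sends this tile to $X(v')$. This is precisely the elementary fact that the composition of two half-turns about distinct centers is a translation: $h$ ``shifts'' the distinguished white tiles one period along $E$, $X(v)\mapsto X(v')$, away from the fixed endpoint $p$.

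Finally, since $E$ is $g$-invariant with $d_E=\deg(g|E)\geq 2$ by \eqref{eq:E_ginv}, the $0$-edge $E$ is subdivided into $d_E^n$ $n$-edges, so the number of interior type-$p$ $n$-vertices on $E$ tends to infinity with $n$. Iterating the shift, $h^m$ maps $X(v_0)$ to $X(v_m)$ along the chain of type-$p$ vertices; choosing $n$ large enough that $E$ carries more than $m$ interior type-$p$ vertices, the tiles $X(v_0)$ and $X(v_m)$ are distinct, so $h^m$ acts non-trivially on level $n$ and hence $h^m\neq 1$ in $\img(g)$. Thus $h$ has infinite order. The main obstacle is the bookkeeping in the key step: one must verify that the two half-turns match up correctly across the alternating type-$p$/type-$q$ vertices so that their composition is a genuine one-period translation, and check that the degenerate flower at $p$ (which has degree $1$ by \eqref{eq:c2}) causes no trouble --- both of which are handled by restricting attention to interior vertices and letting $n\to\infty$, exactly as in the proof of Corollary~\ref{cor:ab4_inf_order}.
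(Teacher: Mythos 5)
Your proposal is correct and follows essentially the same route as the paper, which proves this theorem by asserting that the argument of Corollary~\ref{cor:ab4_inf_order} carries over verbatim to the element $a^{k_q/2}b^{k_p/2}$: conditions \eqref{eq:E_ginv}--\eqref{eq:bw_above_below} make this element shift the distinguished white $n$-tiles $X(v)$ along $E$, and the number of interior type-$p$ $n$-vertices on $E$ (namely on the order of $d_E^n$) is unbounded, so no power acts trivially. Your ``half-turn plus half-turn equals translation'' description is a slightly more explicit account of the same mechanism the paper leaves implicit.
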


\begin{theorem}
  \label{thm:exp_growth_gen}
  Let $g\colon S^2\to S^2$ be a Thurston map, and $\CC\subset
  S^2$ be a Jordan curve with $\post(g) \subset \CC$ that
  satisfies conditions
  \eqref{eq:E_ginv}--\eqref{eq:prep_rel_prime}. Then $\img(g)$ is
  of exponential growth.  
\end{theorem}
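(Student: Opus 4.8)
The plan is to reduce Theorem~\ref{thm:exp_growth_gen} to the two mechanisms already established for the map $f_1$: the ``shift along the invariant edge'' of Corollary~\ref{cor:ab4_inf_order} and the ``branching'' of Lemma~\ref{lem:free_semi_gp}. I fix the basepoint $t$ in the interior of the white $0$-tile $\XOw$ and let $a$, $b$ be the generators of $\img(g)$ represented by loops around $q$ and $p$ respectively, as in the paragraph preceding the theorem. Recall from the discussion following \eqref{eq:bw_above_below} that conditions \eqref{eq:E_ginv} and \eqref{eq:bw_above_below} force the flower degrees $k_p$ and $k_q$ from \eqref{eq:deggv} to be even, so that the two elements
\[
  s \coloneqq a^{k_q/2}\, b^{k_p/2},
  \qquad
  s' \coloneqq s\, b^{k_p} = a^{k_q/2}\, b^{3k_p/2}
\]
are well defined. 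Since each of $s$, $s'$ is a fixed word in $a,b$, it suffices to show that $\{s,s'\}$ generates a free sub-semigroup of $\img(g)$: then the $2^m$ products of $m$ such factors are pairwise distinct and have word length at most $\tfrac{1}{2}(k_q+3k_p)\,m$, which yields exponential growth.

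First I would establish the \emph{shift} behaviour exactly as in Corollary~\ref{cor:ab4_inf_order}. By \eqref{eq:E_ginv} the edge $E$ is subdivided into $d_E^{\,n}$ $n$-edges whose endpoints are $n$-vertices of alternating type $p$ and $q$; by \eqref{eq:deggv} every interior $p$-vertex $v\in E\setminus\{p,q\}$ has flower degree $k_p$ and every interior $q$-vertex has flower degree $k_q$, and by \eqref{eq:bw_above_below} (via Lemma~\ref{lem:div_sector_1v_nv}) the edge $E$ splits each such flower into two sectors with the \emph{same} number of white $n$-tiles. Hence to each interior $p$-vertex $v$ there is attached a distinguished white $n$-tile $X(v)\subset W^n(v)\cap\XOw$ meeting $E$ in an $n$-edge. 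Because $k_p$ is even and the two sectors are equal, the power $b^{k_p/2}$ is the half-turn of the degree-$k_p$ flower that interchanges these two sectors, carrying $X(v)$ to the white tile adjacent to $E$ on the opposite side; likewise $a^{k_q/2}$ is the half-turn of the intervening degree-$k_q$ $q$-flower, and the two compose to advance the distinguished tile to $X(v')$, where $v'$ is the next $p$-vertex along $E$ away from $p$. As $d_E^{\,n}\to\infty$ there are arbitrarily long such chains, so $s$ has infinite order; note that $s'=s\,b^{k_p}$ acts \emph{identically} on these tiles, since $b^{k_p}$ fixes every degree-$k_p$ flower.

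Next I would run the branching argument of Lemma~\ref{lem:free_semi_gp} at the point $c$ supplied by \eqref{eq:prep_rel_prime}. For $n\ge k$ the vertex $c$ is an $n$-vertex of type $p$ with $\deg(g^n,c)=d_c$ (using $\deg(g,p)=1$ from \eqref{eq:c2}), and $E$ has $d_c$ preimage-arcs $A_1,\dots,A_{d_c}$ meeting pairwise only at $c$, with $g^k\colon A_i\to E$ a homeomorphism; on each $A_i$ the level-$n$ picture is a copy of the level-$(n-k)$ picture on $E$, so $s$ and $s'$ both shift distinguished tiles outward along each $A_i$ exactly as along $E$. The crucial consequence of \eqref{eq:prep_rel_prime} is that $b$ acts on $W^n(c)$ as a $d_c$-cycle on its white tiles (equivalently on the arcs $A_1,\dots,A_{d_c}$), so $b^{k_p}$ rotates the arcs by $k_p\bmod d_c\neq 0$, hence \emph{without fixed points}; in particular $b^{k_p/2}$ and $b^{3k_p/2}$ send a fixed white tile $X\subset W^n(c)$ onto two \emph{different} arcs. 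Given two distinct words in $\{s,s'\}$, I would cancel their common prefix (including the trailing $a^{k_q/2}$) so that the reduced words begin with the differing powers $b^{k_p/2}$ and $b^{3k_p/2}$; applied to $X$ these diverge onto distinct arcs, after which the remaining letters shift monotonically outward along those distinct arcs. Choosing $n$ with $d_E^{\,n-k}$ larger than the word lengths guarantees the shifts never return to $c$, so the two words send $X$ to distinct $n$-tiles and are therefore distinct in $\img(g)$; the unequal-length case, and its degenerate subcase in which one reduced word is empty (so it fixes $X$ while the other moves it off its arc), are handled verbatim as in Lemma~\ref{lem:free_semi_gp}. This shows $s,s'$ generate a free semigroup, whence $\img(g)$ has exponential growth.

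The main obstacle, as in Section~\ref{sec:exponential-growth-r}, is the combinatorial bookkeeping around the flowers: one must verify that the distinguished tiles $X(v)$ are defined consistently along $E$ and along each arc $A_i$, and that the two half-turns $a^{k_q/2}$, $b^{k_p/2}$ truly compose to a single one-step shift rather than some other tile permutation. All of this rests on \eqref{eq:bw_above_below} (equal sectors) together with \eqref{eq:E_ginv} and \eqref{eq:deggv}. The only genuinely new input beyond the $f_1$ case is the equivariant bijection between the arcs at $c$ and the white tiles of $W^n(c)$, which is what makes a nontrivial rotation separate arcs and is precisely where \eqref{eq:prep_rel_prime} enters.
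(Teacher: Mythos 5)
Your proposal is correct and follows exactly the route the paper takes: the paper's own proof consists of observing that, under \eqref{eq:E_ginv}--\eqref{eq:bw_above_below}, the element $a^{k_q/2}b^{k_p/2}$ shifts distinguished white tiles along $E$ as in Corollary~\ref{cor:ab4_inf_order}, and that under \eqref{eq:prep_rel_prime} the pair $a^{k_q/2}b^{k_p/2}$, $a^{k_q/2}b^{3k_p/2}$ generates a free semigroup by the branching argument at $c$ as in Lemma~\ref{lem:free_semi_gp} --- precisely the two elements and the two mechanisms you use. Your write-up simply makes explicit the details (half-turns of flowers, the tile--arc bijection at $c$, the choice of $n$) that the paper leaves to the reader.
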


Let us provide some remarks on how our conditions may be relaxed.
\begin{remarks}
\label{rem:vary_exp_growth}
\phantom{X}

\mbox{}   

 (1) For any Thurston map $g\colon S^2\to S^2$, $\img(g)$ is
  isomorphic to $\img(g^n)$ for any $n\in \N$. Thus to prove
  exponential growth of $\img(g)$ it is enough to check that our
  conditions are satisfied for some iterate $g^n$.


   (2) Two Thurston maps $f\colon S^2\to S^2$ and $g\colon S^2\to S^2$
  that are Thurston equivalent have isomorphic iterated monodromy
  groups \cite[Corollary 6.5.3]{Nekra}. Thus our conditions, in particular \eqref{eq:E_ginv},
  need only be satisfied up to isotopy rel.\ $\post(g)$. 


    (3) For simplicity, let us denote $x:=a^{k_p/2}b^{k_q/2}$. By \eqref{eq:gpp} there exists a unique point $t_0\in g^{-1}(t)\cap W^1(p)$. Conditions \eqref{eq:E_ginv}--\eqref{eq:bw_above_below} imply that there is a sequence of distinct points $t_0, t_1, \dots, t_{n-1} \in g^{-1}(t)$, such that $t_{j+1 \bmod(n)} = t_j^x$, that is, the lift $x_j$ of $x$ starting at $t_j$ ends at $t_{j+1 \bmod (n)}$ for each $j=0,\dots,n-1$. In fact, $n=d_E=\deg(g|E)$. Let $\gamma$ be the closed curve in $\Sp\setminus\post(f)$ obtained by concatenation of the paths $x_j$, $j=0,\dots,n-1$. It follows that the curve $\gamma$ is homotopic to $x$ rel.\ $\post(g)$. Furthermore $\deg(g|\gamma)=n >1$. Loosely speaking, we can say that $x$ acts on the white $n$-tiles
  sharing an edge with $E$ by ``permuting them cyclically around $E$''. In this way, one can relax conditions \eqref{eq:E_ginv}, \eqref{eq:deggv}, and \eqref{eq:bw_above_below} by requiring existence of an element $x\in\img(g)$ with the properties mentioned above. This element $x$ will be automatically of infinite order in $\img(g)$. However, one will also need to adapt condition \eqref{eq:prep_rel_prime} in a certain way to conclude the exponential growth of $\img(g)$. The resulting criterion however is somewhat cumbersome and will not be formulated here.

\end{remarks}




\section{Sierpi\'{n}ski carpet rational maps }
\label{sec:sierp-carp-rati}

In this section we present a family of postcritically-finite rational maps,
such that each map in the family has a Sierpi\'{n}ski carpet as its Julia set, and
iterated monodromy group of exponential growth. 
Here, we call a set $\SC\subset \CDach$ a \emph{Sierpi\'{n}ski carpet} if
and only if it is homeomorphic to the standard Sierpi\'{n}ski
carpet. By Whyburn's characterization, this is the case if and
only if $\SC$ is compact, connected, locally connected, has
topological dimension $1$, and no local cut-points. Equivalently,
there exists a sequence $\{D_j\}_{j\in \N}$ of Jordan domains in
$\CDach$ having pairwise disjoint closures, such that $\SC = \CDach\setminus (\bigcup_j D_j)$ has empty interior and
$\diam(D_j) \to 0$ as $j\to \infty$, see
\cite{Why}. These two characterizations show that a Sierpi\'{n}ski
carpet is a universal object and justify the study of rational
maps with Sierpi\'{n}ski carpet Julia set. The first example of
such a rational map is due to Milnor and Tan Lei, see \cite[Appendix]{MilTan_Sierp}.


The rational maps we will consider now have originally been
studied by Ha\"{i}ssinsky and Pilgrim in
\cite{HP_Sierp} (with one difference that we address
at the end of the section). We briefly review the construction, which is similar to 
the way the map $f_1$ from Section~\ref{sec:constr-f} was
defined. 

To keep the discussion elementary, we first consider
one concrete map from our family. Let $\Delta$ and $\Delta'$ be the two
polyhedral surfaces shown in Figure~\ref{fig:f_sierp}. Here, $\Delta$ is a pillow obtained by
gluing two copies of the unit square $[0,1]^2$, called faces of $\Delta$, together along
their boundaries. The polyhedral surfaces $\Delta'$ is constructed from
$2\times 11$ squares of side length $1/3$, called
\emph{$1$-squares} of $\Delta'$ 
(they correspond to $1$-tiles in
the terminology of Section~\ref{sec:tiles-flow-iter}). More
precisely we take two copies of
\begin{equation*}
  Y\coloneqq 
  [0,1]^2 ~
  \cup 
  ~ [0,\tfrac{1}{3}]\times [-\tfrac{1}{3},0]  ~
  \cup  ~
  ~ [-\tfrac{1}{3},0]\times [0,\tfrac{1}{3}] ~
  \subset 
  \R^2
\end{equation*}
and glue them together along their boundaries. We call each copy
of $Y\subset \Delta'$ a face of $\Delta'$. Clearly $\Delta$
and $\Delta'$ are homeomorphic to $S^2$. As polyhedral surfaces,
$\Delta$ and $\Delta'$ may both be naturally viewed as Riemann
surfaces. With this point of view, there are conformal maps 
\begin{equation*}
  \varphi\colon \Delta\to\CDach
  \text{ and }
  \varphi'\colon \Delta'\to \CDach. 
\end{equation*}
By symmetry, we may assume that $\varphi$ and $\varphi'$ map one
face of $\Delta$ and, respectively, $\Delta'$ to the upper half-plane
and the other face to the lower half-plane, so that the points marked
$-1,0,1,\infty$ in Figure~\ref{fig:f_sierp} are mapped to
$-1,0,1,\infty\in \widehat{\R}\subset \CDach$, respectively. In fact,
$\varphi$ and $\varphi'$ may be constructed explicitly by mapping
each face of $\Delta$ and, respectively, $\Delta'$ by a Riemann map to
the upper or lower half-plane, where the four marked points are mapped
as indicated.

The map $g\colon \Delta' \to \Delta$ is now given as follows. 
Each $1$-square of $\Delta'$ 
is mapped (conformally) by a similarity that scales by the factor $3$ to a
face of $\Delta$ as indicated in
Figure~\ref{fig:f_sierp}. We define the map
\begin{equation*}
  f\coloneqq 
  \varphi \circ g \circ (\varphi')^{-1}\colon \CDach \to \CDach.
\end{equation*}
Note that $f$ is holomorphic, hence a rational map. Consider the
points in $\Delta'$ where at least four $1$-squares of $\Delta'$
intersect. Their images under $\varphi'$ are exactly the critical
points of $f$. Furthermore, the points $-1,0,1,\infty\in\CDach$,
which are the images of the vertices of $\Delta$ under $\varphi$,
are exactly the postcritical points of $f$. So, $f$ is a rational
Thurston map with $\post(f)=\{-1,0,1,\infty\}$. Moreover, the
Julia set of $f$ is a Sierpi\'{n}ski carpet, see \cite{HP_Sierp}.

\begin{figure}
  \centering
  \begin{overpic}
    [width=12cm, tics=20,
    ]{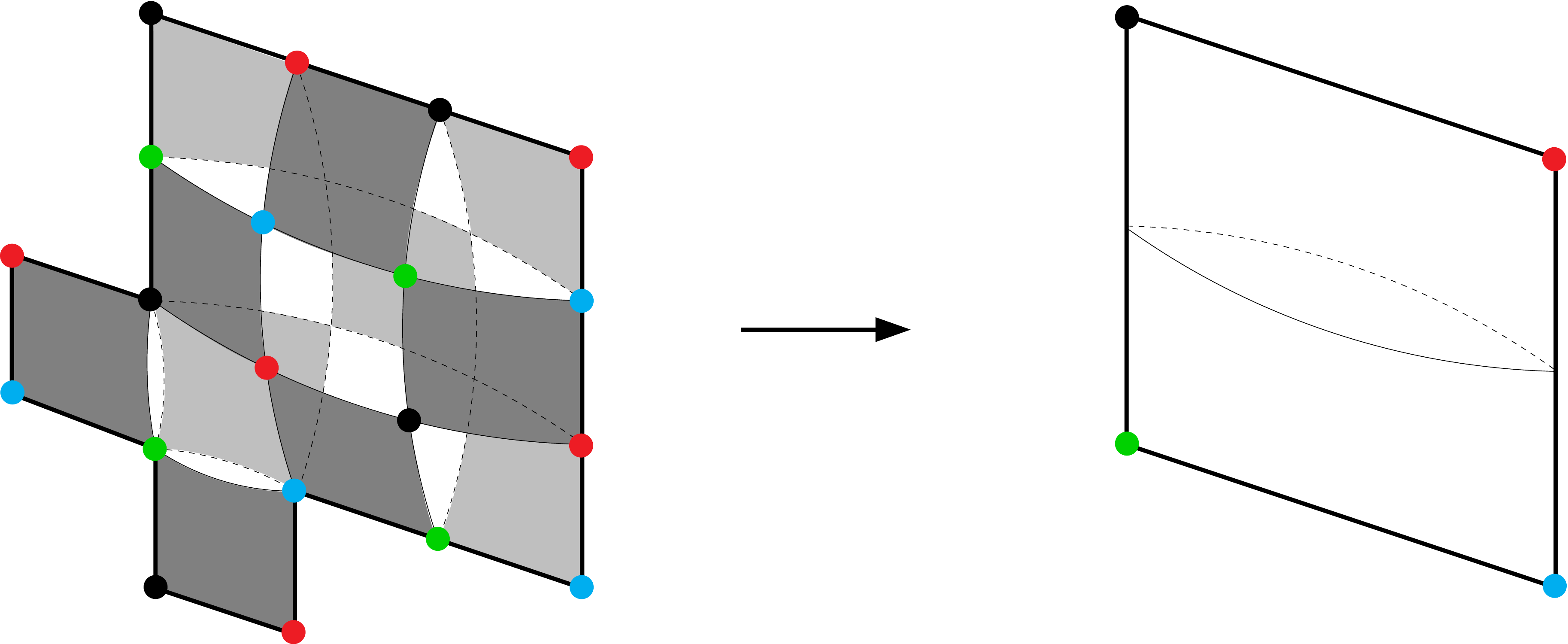}
    %
    \put(70,9.3){$0$}
    \put(67,39){$-1$}
    \put(100.2,2){$1$}
    \put(100.2,30){$\infty$}
    \put(51,21.5){$g$}
    %
    \put(7.5,9.7){$0$}
    \put(5,39){$-1$}
    \put(38,2){$1$}
    \put(38,30){$\infty$}
    \put(38.3,11.7){$a_0$}
    \put(38.3,20.5){$b_0$}
    \put(19.5,6.5){$c_0$}
    \put(23,38){$\Delta'$}
    \put(85,38){$\Delta$}
  \end{overpic}
  \caption{Construction of a Sierpi\'{n}ski carpet map.}
  \label{fig:f_sierp}
\end{figure}


We mention in passing that we may think of $f$ as being
constructed via a \emph{two-tile subdivision rule} in the sense
of Cannon-Floyd-Parry (see \cite{CFKP} and
\cite[Chapter~12]{THEbook}). Alternatively, $f$ is constructed
from a Latt\`{e}s map by ``adding a flap'' or ``blowing up an
arc'' in the sense of \cite{PT}. 

Let $\CC\coloneqq \widehat{\R}$, $E\coloneqq [1,\infty]\subset
\CC$, $p\coloneqq1$, and $q\coloneqq \infty$. The ramification portrait of $f$
restricted to $E$ is 
\begin{equation*}
  \xymatrix{
    b_0 \ar[r]^{2:1} & 1\ar@(r,u)[] 
    & a_0 \ar[r]^{2:1} & \infty \ar@(r,u)[]\rlap{.}
  }
\end{equation*}
Again, by a slight abuse of notation, we denote the images of the points labeled by $a_0$, 
$b_0$, and $c_0$ in Figure~\ref{fig:f_sierp} under $\varphi'$ by $a_0$, $b_0$, and $c_0$, respectively.

Note that the critical point $c_0$ of $f$ satisfies $\deg(f,c_0)= 3$, see Figure~\ref{fig:f_sierp}. 
It is now
elementary to check that with the above choices conditions
\eqref{eq:E_ginv}--\eqref{eq:prep_rel_prime} from Section \ref{sec:crit-expon-growth} are satisfied. It
follows from Theorem~\ref{thm:exp_growth_gen} that $\img(f)$ is
of exponential growth. Thus we have proved Theorem~\ref{thm:fsierp_exp_growth}. 

We may vary the construction of the map $f$. In particular,
divide each of the two faces of the pillow $\Delta$ in
$n\times n$ squares of side length $1/n$, where $n\geq 3$ is an odd number, and add two diagonally symmetric flaps as in Figure~\ref{fig:f_sierp} to obtain a polyhedral surface $\Delta'$. Then repeat the above construction. Again the
resulting rational map $f$ has a Sierpi\'{n}ski carpet as its Julia set,
and $\img(f)$ is of exponential growth. The maps in
\cite{HP_Sierp} are constructed in the same fashion,
but there $n\geq 2$ is even. This results in a \emph{hyperbolic} rational map $f$, meaning
that every critical point of $f$ is contained in the Fatou set $\FC_f$. For each such map $f$ condition~\eqref{eq:deggv} is not satisfied. Consequently,
Theorem~\ref{thm:exp_growth_gen} does not apply. We do not know
if these hyperbolic maps have iterated monodromy groups of
exponential growth. However, the argument in \cite{HP_Sierp}, showing that
$f$ has a Sierpi\'{n}ski carpet Julia set, is equally valid
independently of whether $n$ is even or odd.


\section{A family of obstructed maps}
\label{sec:family-obstr-maps}

Here, the example from the previous section is slightly modified to
obtain an infinite family of obstructed Thurston maps with
iterated monodromy groups of exponential growth. 

The construction is very similar to the one in Section \ref{sec:sierp-carp-rati}. The only difference is that instead of ``adding two flaps'' to a Latt\`{e}s map we add only one; see Figure~\ref{fig:obtrs_map} and
compare with Figure~\ref{fig:f_sierp}. As before, we obtain a map
$g\colon \Delta'\to \Delta$ and define homeomorphisms
$\varphi\colon \Delta \to \CDach$ and
$\varphi'\colon \Delta'\to \CDach$ that are normalized as in the
previous section. That is, the top and bottom faces of $\Delta$ (and $\Delta'$)
are mapped to the upper and lower half-planes in $\CDach$, respectively, so that the points
labeled by $-1,0,1,\infty$ in $\Delta$ (and $\Delta'$) are mapped to
$-1,0,1,\infty\in \CDach$, respectively. 
We obtain a Thurston
map
$f\coloneqq \varphi\circ g \circ (\varphi')^{-1} \colon \CDach
\to \CDach$
such that $\img(f)$ is of exponential growth (using
Theorem~\ref{thm:exp_growth_gen} in the exact same form as in the
previous section). 


However, we want to point out a crucial difference from the situation in Section \ref{sec:sierp-carp-rati}. 
Namely, here each face of
$\Delta'$ is not symmetric with respect to the (diagonal) geodesic
joining the vertices labeled by $0$ and $\infty$ (in the respective face).
This means we
cannot choose the map $\varphi'\colon \Delta'\to \CDach$ to be conformal with
our normalization. Consequently, the constructed map $f$ is
not rational. 



With a slight abuse of notation consider now an arbitrary Thurston map $f\colon\Sp\to\Sp$. Thurston gave a criterion when the map $f$
is Thurston equivalent to a rational map. We present it only in the
case when $f$ has $4$ postcritical points and a hyperbolic orbifold. We refer to
\cite{DH_Th_char} for the general statement, as well as the
terminology. Let $\gamma\subset \Sp\setminus \post(f)$ be a Jordan curve
that is \emph{non-peripheral}, meaning that each component of
$\Sp \setminus \gamma$ contains at least $2$ postcritical
points (since $\#\post(f)=4$ this means that each component
contains exactly $2$ postcritical points). Let $\gamma_1,\dots ,\gamma_k$ be the
components of $f^{-1}(\gamma)$ that are non-peripheral. The curve
$\gamma$ is called \emph{invariant} if one (or, equivalently in our case, each) of the curves
$\gamma_j$, $j=1,\dots, k$, is homotopic to $\gamma$ rel.\ $\post(f)$. Denote by $d_j$ the degree of the restriction
$f\colon \gamma_j \to \gamma$ for $j=1,\dots,k$. Assume that $\gamma$ is invariant and define
\begin{equation}\label{eq:Thurst_coeff}
  \lambda_f(\gamma)\coloneqq \sum_{j=1}^{k} \frac{1}{d_j}.
\end{equation}
Then the curve $\gamma$ is called a \emph{Thurston obstruction}
if $\lambda_f(\gamma) \geq 1$. Thurston's theorem now says that
$f$ is Thurston equivalent to a rational map if and only if $f$
has no Thurston obstruction. Otherwise $f$ is called an
\emph{obstructed} Thurston map.

\begin{figure}
  \centering
  \begin{overpic}
    [width=11cm, tics=10,
    ]{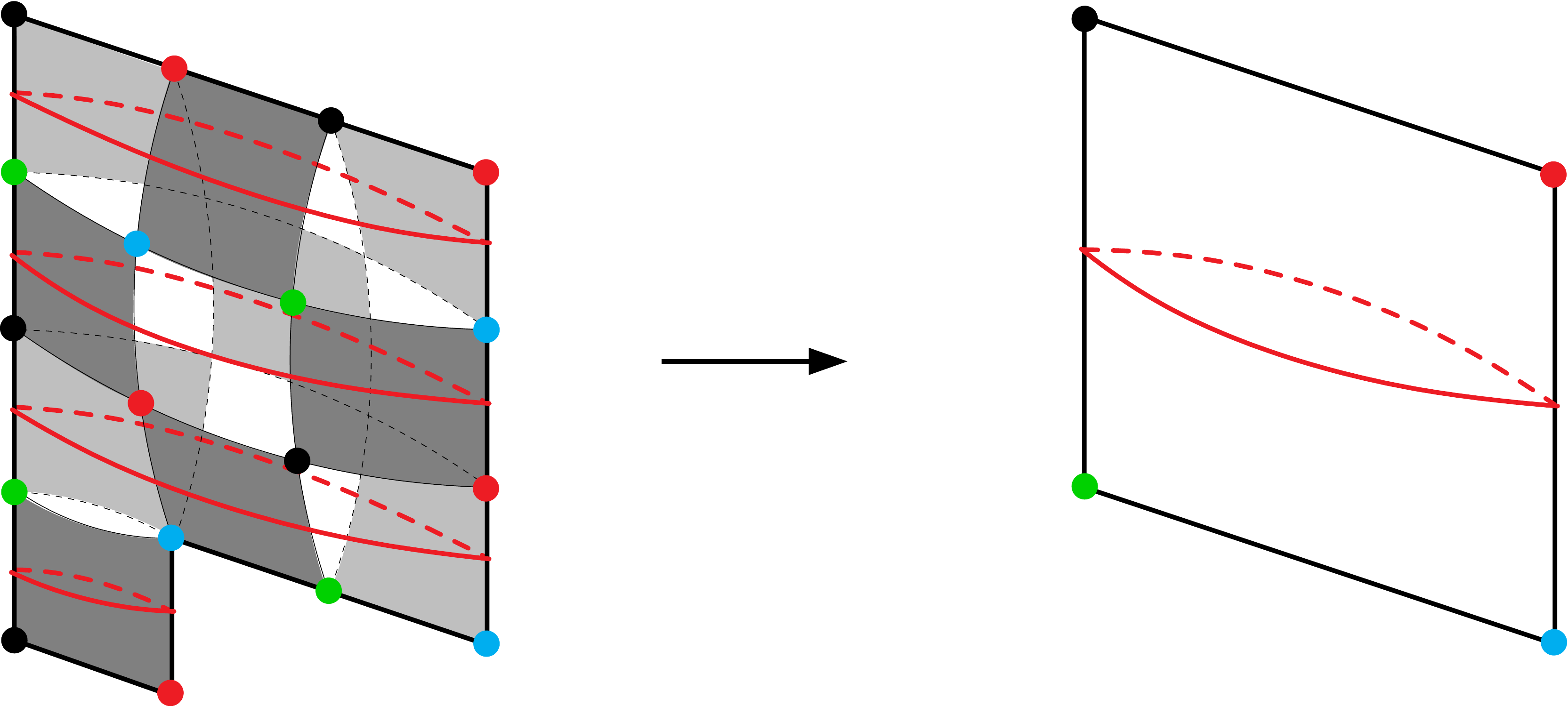}
    \put(68,10){$0$}
    \put(64,42.6){$-1$}
    \put(100.4,2.7){$1$}
    \put(100.4,33){$\infty$}
    \put(100,18){$\gamma$}
    \put(47,24){$g$}
    %
    \put(32,29){$\gamma_1$}
    \put(32,18){$\gamma_2$}
    \put(32,8){$\gamma_3$}
    \put(12,5){$\gamma_4$}
    \put(-2.5,12){$0$}
    \put(-4.3,43){$-1$}
    \put(32,2){$1$}
    \put(30,35.3){$\infty$}
    %
    \put(17,40){$\Delta'$}
    \put(84,40){$\Delta$}    
  \end{overpic}
  \caption{Construction of an obstructed map.}
  \label{fig:obtrs_map}
\end{figure}

It follows that the map $f:\CDach\to\CDach$ constructed in this section
has a Thurston obstruction $\gamma$ as shown in
Figure~\ref{fig:obtrs_map}. Here, we choose to draw $\gamma$ in
$\Delta$ and its preimages $\gamma_1$, \dots, $\gamma_4$ in $\Delta'$ for simplicity. In
reality, we consider the images of these curves under $\varphi$ and $\varphi'$ in
$\CDach$, respectively. Then $\lambda_f(\gamma)=1$ (note that the
component $\gamma_4$ of $f^{-1}(\gamma)$ is peripheral, thus it does
not contribute to the sum \eqref{eq:Thurst_coeff}). Consequently, $\gamma$ is a Thurston
obstruction. Thus $f$ is not Thurston equivalent to a rational
map by Thurston's theorem, meaning it is obstructed. 
 We have proved Theorem~\ref{thm:f_obtructed_exp_growth}.

\section{A non-renormalizable polynomial $P$ with $\img$ of exponential growth}
\label{sec:non-renorm-polyn}

Here, we present an example of a postcritically-finite,
non-renormaliz\-able polynomial $P\colon \CDach \to \CDach$ with dendrite Julia set and
iterated monodromy group of exponential growth. This polynomial serves as an example in Theorem~\ref{thm:P_not_renorm_exp_growth}. In fact, $P$ is
given by
\begin{equation*}
  \label{eq:defP}
  P(z) = \frac{2}{27} (z^2+3)^3(z^2-1) +1 
  =
  \frac{2}{27}z^8 + \frac{16}{27}z^6 + \frac{4}{3} z^4 -1. 
\end{equation*}
From these two expressions we immediately see that $P$ has the
(finite) critical points $\pm \sqrt{3}i$ of local degree $3$, and
$0$ of local degree $4$.
Furthermore, they are mapped as follows
\begin{equation*}
  \label{eq:rami_port_P}
  \xymatrix{
    \pm \sqrt{3} i \ar[r]^{3:1} 
    & 
    1\ar@(ru,lu)[]
    & 
    -1 \ar[l] 
    &
    0 \ar[l]_{4:1}.
  }
\end{equation*}
Thus $\post(P)=\{-1,1,\infty\}$.
Let $p\coloneqq1$, $q\coloneqq-1$, $\CC\coloneqq\widehat{\R}$, $E\coloneqq[-1,1]$, and
$c\coloneqq\sqrt{3}i$. With these choices conditions
\eqref{eq:E_ginv}--\eqref{eq:prep_rel_prime} are satisfied. From
Theorem~\ref{thm:exp_growth_gen} it follows that $\img(P)$ has
exponential growth. Hence $P$ satisfies property \ref{cond:exp_growth} of Theorem \ref{thm:P_not_renorm_exp_growth}.

\begin{figure}
  \centering
  \begin{subfigure}[b]{0.4\textwidth}
    \centering
    \begin{overpic}
      [width=4cm, tics=10,
      ]{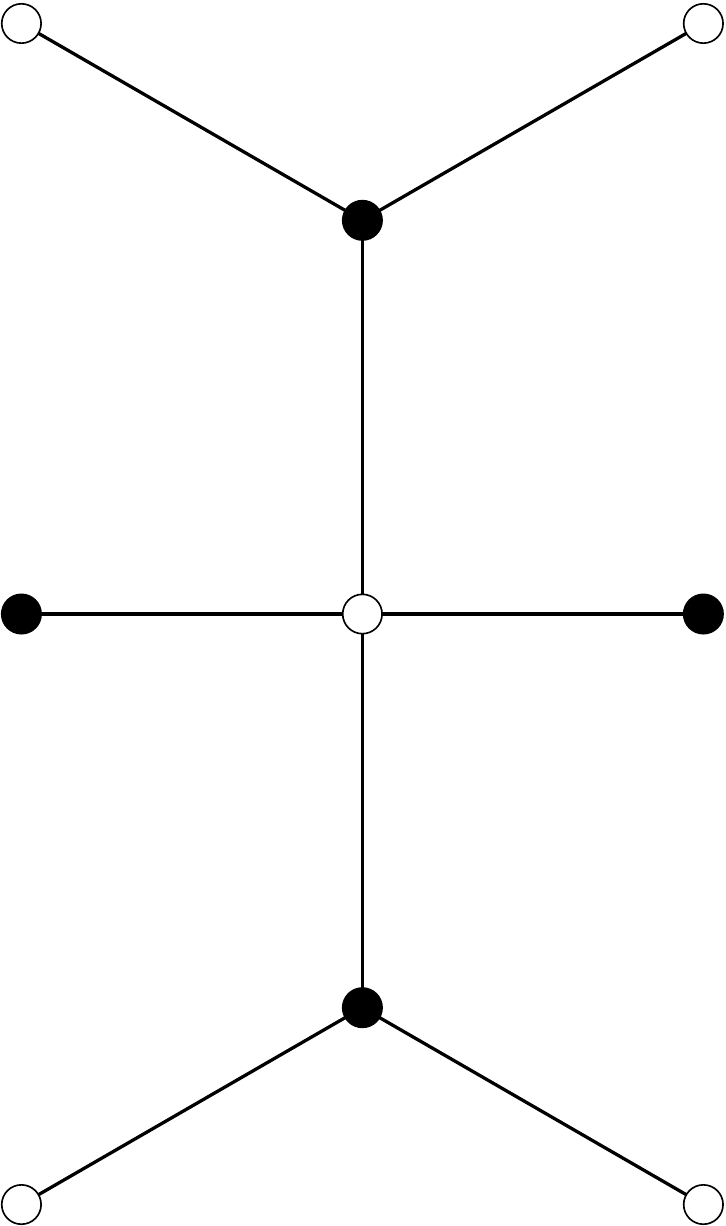}
      \put(-4,45){$\scriptstyle{-1\mapsto 1}$}
      \put(54,45){$\scriptstyle{1\mapsto 1}$}
      \put(31,45){$\scriptstyle{0\mapsto -1}$}
      \put(31,78){$\scriptstyle{c\,\mapsto 1}$}
      \put(31,18.5){$\scriptstyle{-c\,\mapsto 1}$}
      \put(-2,101){$\scriptstyle{\mapsto -1}$}
      \put(54,101){$\scriptstyle{\mapsto -1}$}
      \put(-2,5){$\scriptstyle{\mapsto -1}$}
      \put(54,5){$\scriptstyle{\mapsto -1}$}
    \end{overpic}
    \caption{The pre-Hubbard tree/ dessin d'enfant of~$P$.}
    \label{fig:P_dessin}
  \end{subfigure}
  \phantom{XXX}
  \begin{subfigure}[b]{0.4\textwidth}
    \centering
    \begin{overpic}
      [width=5.5cm,
      tics=10]{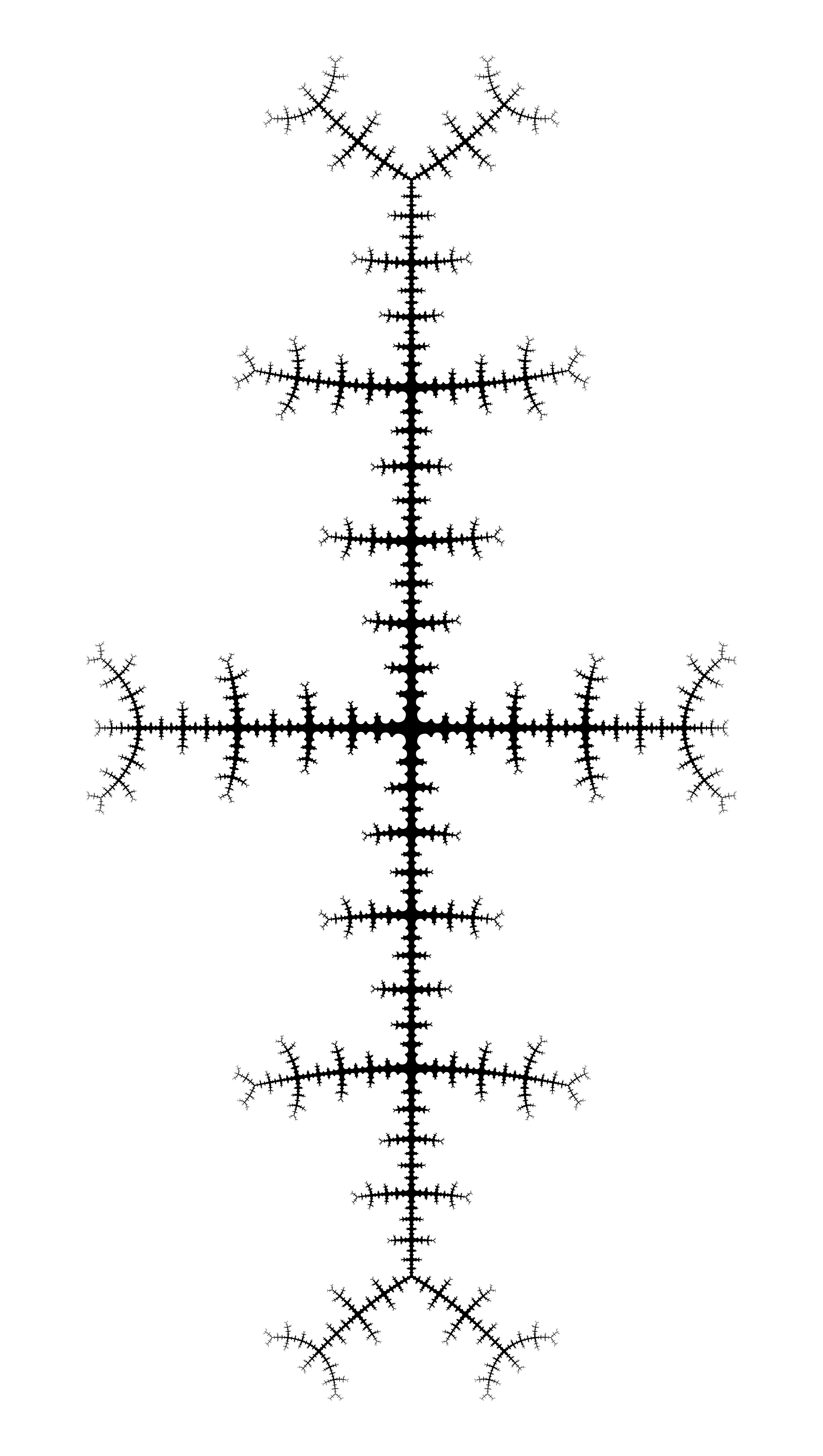}
    \end{overpic}
    \caption{The Julia set of $P$.}
    \label{fig:P_Julia}
  \end{subfigure}
  \caption{Visualizing $P$.}
\end{figure}


Clearly $P$ is postcritically-finite and has no (finite) periodic
critical points. It follows that the Julia set $\JC$ of
$P$, shown in Figure~\ref{fig:P_Julia}, is a \emph{dendrite} (that is, a compact, connected, locally connected set
with empty interior which does not separate the plane), see
\cite[\S 11.2]{BearIter}.
Consequently, $P$ satisfies property
\ref{cond:J_dendrite} of
Theorem~\ref{thm:P_not_renorm_exp_growth}.  
 
Since $\JC$ is a dendrite, the \emph{Hubbard tree} of $P$ may be defined as the smallest
continuum in $\JC$ containing all (finite) postcritical
points (see \cite{DH_Orsay}). From $P([-1,1]) = [-1,1]$ we
conclude that $[-1,1]$ is contained in the Julia set
$\mathcal{J}$ (because each orbit in $[-1,1]$ is bounded). Hence,
$H=[-1,1]$ is the Hubbard tree of $P$ and the Julia set $\JC$ coincides with the closure
\[\overline{\bigcup_{n\geq 0}P^{-n}(H)}.\]

Let us color the point $1$
black and $-1$ white. The preimage $H'\coloneqq P^{-1}(H)$ of the
Hubbard tree $H$ is schematically shown in
Figure~\ref{fig:P_dessin}. Here, we color the points in
$P^{-1}(1)$ black and the points in $P^{-1}(-1)$ white, and
indicate how they are mapped for convenience (even though this
information is already contained in the coloring). We also label
the critical and postcritical points.

Let us recall that a point $x$ of a dendrite $X$ is called a
\emph{leaf} of $X$ if $X\setminus\{x\}$ is connected. Clearly,
the points $1$ and $-1$ are leaves of the Hubbard tree $H$ and
the pre-Hubbard tree $H'$. It follows that each set $P^{-n}(H)$, $n\in\N_0$, is a planar tree and its set of leaves contains points $1$, $-1$.
Consequently,
the postcritical points $1$ and $-1$ are leaves of the dendrite
Julia set $\JC$. Thus $P$ satisfies property \ref{cond:post_are_leaves} of Theorem \ref{thm:P_not_renorm_exp_growth}.


The polynomial $P$ is in fact a \emph{Shabat} (or \emph{Bely\u{\i}})
  \emph{polynomial}. This means that $P$ has exactly $2$ finite
critical values. The diagram in Figure~\ref{fig:P_dessin} is the
\emph{dessin d'enfant} of $P$. For a general Shabat polynomial,
its dessin d'enfant is obtained as the preimage of an arc
connecting its two finite critical values. 
It is evident that the dessin d'enfant of any Shabat
polynomial is a tree. Conversely, every planar tree $\TC$ 
is the dessin d'enfant of a Shabat polynomial, see \cite[Theorem~2.2.9]{LanZvo}.
Roughly speaking, one considers a
  set of half-planes
  \[\bigsqcup_{\text{edges $e$ of $\TC$}} \{\Halb^+_e, \Halb^-_e\},\]
  that is, two half-planes for each edge $e$ of the tree
  $\TC$. Then, one constructs an abstract Riemann surface
  $\Delta'$ by gluing these half-planes together as indicated by
  the structure of $\TC$.  There is a natural holomorphic map
  $g\colon \Delta'\to \C$ that maps the half-planes in $\Delta'$
  alternatingly to the upper and lower half-planes in $\C$. Using
  the uniformization theorem one obtains a conformal map
  $\varphi\colon \Delta'\to \C$. The map $g\circ \varphi^{-1}$ is
  then the desired map, which can be shown to be a Shabat
  polynomial. Put differently, the construction of a Shabat
  polynomial is very similar to the construction of the rational
  maps in Section~\ref{sec:constr-f} and
  Section~\ref{sec:sierp-carp-rati}. The half-planes
  $\Halb^{+}_e$ and $\Halb^{-}_e$ correspond to the white and
  black $1$-tiles 
  from which
  $\Delta'$ was constructed there.  Finally, we note that a
  general dessin d'enfant is not necessarily planar or a
  tree. The concept was introduced by Grothendieck in \cite{Gro}
  as a way to describe algebraic curves.

\section{$P$ is not renormalizable}
\label{sec:p-not-renorm}

Here, we show that the polynomial $P$ from the previous section
is not renormalizable. That is, we verify that $P$ satisfies
property \ref{cond:non_renorm} of Theorem \ref{thm:P_not_renorm_exp_growth}. This means $P$ is a counterexample to the
the conjecture stated in Section~\ref{sec:introduction}. However the conjecture may be still valid for quadratic polynomials. We first recall
some relevant definitions.

A continuous map $f\colon U\to V$ is said to be \emph{proper} if
$f^{-1}(K)$ is compact in $U$ for every compact $K\subset V$. 
Assume now in addition that $U$ and $V$ are Jordan
domains in $\C$ and that $f$ is holomorphic. In this case $f$
extends continuously to $\partial U$ and this extension
(still denoted by $f$) satisfies $f(\partial U) = \partial
V$. The map $f\colon \partial U \to \partial V$ is
topologically 
conjugate to $z^d\colon S^1\to S^1$ for a $d\in \N$ (here we view
the unit circle $S^1$ as the boundary of the unit disk
$\D$). Furthermore, every point in $V$ has exactly $d$ preimages
in $U$ when counted with multiplicity (see \cite[Problem~15-c]{Milnor_Book}). The number $d$ is then called the \emph{degree} of the proper map $f\colon U\to V$.

\begin{definition}
  \label{def:poly-like}
  A \emph{polynomial-like map of degree} $d\geq 1$ is a triple $(f,U,V)$, where
  $U,V\subset \C$ are Jordan domains such that $\overline{U}$ is a compact subset of $V$ and $f\colon U\to V$ is a proper holomorphic map of degree $d$.  
\end{definition}

 \hide{It follows that neither $U$ nor $V$ is equal to $\C$ in
   the above definition.} 
Polynomial-like maps were introduced by
 Douady and Hubbard in \cite{DH_poly_like}. The above definition
 differs slightly from the typical one found in the literature,
 as we allow the case $d = 1$. The \emph{filled Julia set} of a
 polynomial-like map $(f,U,V)$, denoted by $\mathcal{K}(f|U)$, 
is the set of points in $U$ that never leave $U$ under iteration of $f$, that is,
\begin{equation*}
  \mathcal{K}(f|U) = \bigcap_{n\geq 0} (f|U)^{-n}(U).
\end{equation*}
It is a compact subset of
$U$. 

\begin{definition}
  \label{def:remorm}
  Let $P\colon \C \to \C$ be a polynomial of degree $d\geq 2$
  with connected Julia set. Let $n\in \N$, then $P^n\colon U\to V$ is called a
  \emph{renormalization} of $P$ if
  \begin{enumerate}
  \item 
    \label{item:remorm1}
    $(P^n,U,V)$ is a polynomial-like map;
  \item 
    \label{item:remorm2}
    $\mathcal{K}(P^n|U)$ is connected;
  \item 
    \label{item:remorm3}
    the degree $\delta$ of $(P^n,U,V)$ satisfies
    \begin{equation*}
      2\leq\delta< d^n.
    \end{equation*}    
  \end{enumerate}
If such a renormalization $P^n\colon U\to V$ exists for some $n\in\N$, then we call the polynomial $P$ \emph{renormalizable}. 
\end{definition}

Condition \eqref{item:remorm3} in the above definition excludes
 trivial polynomial-like restrictions, that is, $P$
itself or maps of degree $1$. Renormalization has been mostly considered in the case of
quadratic polynomials, see for example \cite{McM_renorm}. The
higher 
degree case has been considered in \cite{Inou}, see also
\cite{RussellDierk}. Note that in the non-quadratic case there
are several distinct definitions of renormalization. We have
chosen the most general one.


\begin{theorem}
  \label{thm:non_ren_3post}
  Let $P\colon \C\to \C$ be a polynomial with exactly two
  finite postcritical points that does not have finite periodic
  critical points. Then $P$ is not renormalizable.
\end{theorem}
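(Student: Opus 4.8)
The plan is to reduce to a single renormalization step and then extract a purely topological contradiction from the observation that, with only two finite postcritical points, the little filled Julia set is forced to swallow the entire Hubbard tree. First I would reduce to the case $n=1$. If $P^n\colon U\to V$ is a renormalization of $P$, set $F\coloneqq P^n$; this is a polynomial of degree $d^n$ with connected Julia set $\JC_F=\JC_P$, and $(F,U,V)$ is a renormalization of $F$ of degree $\delta$ with $2\le\delta<d^n=\deg F$. Since $\post(P^n)=\post(P)$ (any $P^i(c)$ with $c\in\crit(P)$ and $i\ge 1$ equals $F(c')$ for a suitable $c'\in P^{-(n-i)}(\crit(P))\subset\crit(F)$) and since a periodic critical point of $F$ would be a periodic critical point of $P$, the polynomial $F$ again has exactly two finite postcritical points and no finite periodic critical points. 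Thus it suffices to show that a polynomial $P$ as in the hypotheses admits no renormalization $(P,U,V)$ of degree $\delta$ with $2\le\delta<d\coloneqq\deg P$ and $\mathcal{K}(P|U)$ connected. Denote the two finite postcritical points by $\alpha,\beta$; as each finite critical value is postcritical, $P$ maps every finite critical point in one step into $\{\alpha,\beta\}$.

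Next I would analyze the little postcritical set $PC'$ of $(P,U,V)$. The critical values of $P|U$ are finite postcritical points of $P$, hence lie in $\{\alpha,\beta\}$, and connectivity of $K'\coloneqq\mathcal{K}(P|U)$ places all of $\crit(P)\cap U$, and hence $PC'$, into $K'$; so $PC'$ is a nonempty, forward-invariant subset of $\{\alpha,\beta\}$. I first rule out $|PC'|=1$. If $PC'=\{\gamma\}$ (a necessarily fixed point), then every critical point $c$ of $P|U$ satisfies $P(c)=\gamma$, and computing the fibre $(P|U)^{-1}(\gamma)$, which has total multiplicity $\delta$, gives
\[
\delta \;\ge\; 1+\sum_{c}\deg(P,c)\;=\;1+(\delta-1)+r\;=\;\delta+r\;>\;\delta,
\]
where $\gamma$ contributes the summand $1$ (it is not critical, else it would be a periodic critical point), and $r\ge 1$ is the number of critical points of $P|U$; this is a contradiction. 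Hence $PC'=\{\alpha,\beta\}$, so both $\alpha,\beta\in K'$, and—$K'$ being a connected subset of the dendrite $\JC_P$—the whole arc $H\coloneqq[\alpha,\beta]$ (the Hubbard tree) is contained in $K'\subset U$.

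The decisive step is then a connectivity argument for $T\coloneqq P^{-1}(H)$. Since $H$ is a connected, full arc containing all finite critical values $\{\alpha,\beta\}$, the preimage $T$ is connected, and it contains every finite critical point of $P$. Because $P(\partial U)=\partial V$ (properness of the polynomial-like map) while $P(T)=H\subset U\subset V$ avoids $\partial V$, no point of $T$ lies on $\partial U$; thus $T\subset U\cup(\C\setminus\overline{U})$, a disjoint union of open sets. Now $T$ meets $U$ (it contains $H$), and since $\delta<d$ the Riemann--Hurwitz identity $\sum_{c\in\crit(P)\cap U}(\deg(P,c)-1)=\delta-1<d-1$ forces some finite critical point $c^{\ast}\notin U$; as $c^{\ast}\in T$ avoids $\partial U$, it lies in $\C\setminus\overline{U}$, so $T$ also meets $\C\setminus\overline{U}$. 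This contradicts connectedness of $T$, and no such renormalization can exist.

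I expect the main obstacle to lie in the second step: forcing \emph{both} postcritical points into $K'$. The final contradiction is clean precisely because $H\subset U$, and that inclusion rests on eliminating the single-point case $|PC'|=1$ and on confirming $PC'=\{\alpha,\beta\}$, together with the facts that a connected subcontinuum of a dendrite contains the arc between any two of its points and that $K'$ contains all critical points of $P|U$. Two standard inputs also need to be pinned down carefully: that $P^{-1}(H)$ is connected (the preimage under a polynomial of a full connected set containing all finite critical values is connected), and that $P(\partial U)=\partial V$ with $H$ disjoint from $\partial V$. Once these are in place the argument is uniform across all possible dynamics of $P$ on $\{\alpha,\beta\}$ (two fixed points, a $2$-cycle, or one mapping to the other), since none of that structure is used beyond $PC'=\{\alpha,\beta\}$.
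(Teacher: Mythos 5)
Your proof is correct, but it takes a genuinely different route from the paper's. The paper argues directly for arbitrary $n$ by asking which of the two finite postcritical points lie in $V$: if neither, Riemann--Hurwitz gives $\delta=1$; if both, then $\CDach\setminus V$ contains only $\infty$ among the postcritical points of $P^n$, so its preimage is a single Jordan domain, forcing $U=P^{-n}(V)$ and $\delta=\deg(P)^n$; if exactly one, the unique point of $P^{-n}(p)$ in $U$ is a non-fixed critical point, and pulling $U$ back inside itself repeatedly produces $\delta$ disjoint pieces at each stage, so $\KC(P^n|U)$ is disconnected. That argument uses nothing beyond the remark on preimages of Jordan domains containing a single postcritical point. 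You instead reduce to one iterate and split according to the little postcritical set $PC'$: the singleton case dies by a fibre-multiplicity count (a clean alternative to the paper's ``preimages proliferate'' disconnection argument), and the two-point case by forcing $[\alpha,\beta]\subset K'\subset U$ and then disconnecting the connected set $P^{-1}([\alpha,\beta])$ with $\partial U$. The price is extra machinery the paper never needs: that $\JC_P=\KC_P$ is a dendrite (true here because a PCF polynomial with no finite periodic critical points has no bounded Fatou components, but worth a sentence), the Douady--Hubbard criterion that connectedness of $K'$ forces $\crit(P)\cap U\subset K'$, and connectivity of preimages of full connected sets containing all finite critical values. Two small repairs: the parenthetical ``$T$ meets $U$ (it contains $H$)'' implicitly asserts $P(H)\subset H$, which does hold (any folding point of $P$ on the arc is critical, hence maps into $\{\alpha,\beta\}$) but is unproved as written; it is more robust to note that $T\supset\crit(P)\cap U\neq\emptyset$. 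Also, you could bypass the dendrite and Hubbard tree entirely by running the same disconnection argument on $P^{-1}(K')$, since $K'$ is already a full continuum inside $U$ containing both finite critical values.
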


For the proof we use the following 
elementary fact that is a consequence of the Riemann-Hurwitz formula \cite[Theorem 7.2]{Milnor_Book}. Let
$V\subset \CDach$ be a Jordan domain that contains a single
postcritical point $p$ of a polynomial $P$. Then every component of $P^{-1}(V)$ is a
Jordan domain that contains exactly one point from
$P^{-1}(p)$. 
\begin{proof}
  Let $P$ be a polynomial as in the statement. 
  Assume that $P$ is renormalizable, that is, there exists a
  polynomial-like map $(P^n,U,V)$ as in
  Definition~\ref{def:remorm} for some $n\in\N$. Denote by
  $\delta$ its degree. Let $p$ and $q$ be the two finite
  postcritical points of $P$ and $\KC(P^n|U)$ be the filled Julia
  set of $(P^n,U,V)$.

  Clearly, if $V$ contains no postcritical point of $P$, then the
  polynomial-like map $(P^n,U,V)$ has degree $\delta=1$ by the Riemann-Hurwitz formula. Thus $(P^n,U,V)$ is not a renormalization of $P$.
 
  Without loss of generality, we may assume from now on that
  $p\in V$. Two cases remain, namely $q\in V$ and $q\notin V$.

  First suppose that $q\in V$. This means that 
  \begin{equation*}
    \{p,q\} 
    = 
    \post(P)\setminus\{\infty\} 
    =
    \post(P^n)\setminus\{\infty\} 
    \subset 
    V. 
  \end{equation*}
  Thus the closed Jordan domain $A_\infty\coloneqq\CDach\setminus
  V$ contains a single postcritical point of $P^n$,
  namely $\infty$, in its interior. 
  Note
  that every component of $P^{-n}(A_\infty)$ contains a preimage
  of $\infty$, hence there is only one such component. Using the
  remark before the proof, we conclude that
  $P^{-n}(A_\infty) = \CDach \setminus 
  P^{-n}(V)$ is a Jordan domain. It follows that $P^{-n}(V)$
  consists of a single component, namely $U$. This in turn
  implies that the degree of the polynomial-like map
  $(P^n,U,V)$ is $\delta= \deg(P^n)= \deg(P)^n$. This contradicts
  condition~\eqref{item:remorm3} in Definition~\ref{def:remorm},
  meaning that $(P^n,U,V)$ is not a renormalization of~$P$.  

  \smallskip
  Now suppose that $q\notin V$, that is, $V$ contains exactly one
  postcritical point, namely $p$. By the remark after Theorem
  \ref{thm:non_ren_3post}, $U$ contains exactly one point in
  $P^{-n}(p)$, which we denote by $c$. Then $c$ is a critical point of $P^n$; for otherwise the degree of $(P^n,U,V)$ is $\delta=1$, contradicting condition~\eqref{item:remorm3} in Definition~\ref{def:remorm}. Since $P$ has no periodic critical points, $c\neq p$. Thus the set
  $(P^n|U)^{-1}(c)= (P^n|U)^{-2}(p)$ contains exactly $\delta$
  distinct points $c_1,\dots,c_\delta$. It follows that
  $(P^n|U)^{-1}(U)=(P^n|U)^{-2}(V)$ has exactly $\delta$
  components $U_1,\dots,U_\delta$ (where $c_j \in U_j$ for
  $j=1,\dots,\delta$). Furthermore, $\overline{U_j}\subset U$ for each $j$, since $\overline{U}\subset V$ and $P^n:U\to V$ is proper. Iterating this process, we see that each
  $U_j$ contains $\delta$ components of $(P^n|U)^{-2}(U)$, and so
  on. It follows that $U_j \cap \mathcal{K}(P^n|U) \neq \emptyset$
  for all \linebreak $j=1,\dots,\delta$. Hence
  $\mathcal{K}(P^n|U)$ is disconnected, contradicting condition \eqref{item:remorm2} in Definition \ref{def:remorm}. Thus $(P^n,U,V)$ is not a
  renormalization of~$P$.
\end{proof}

\longhide{
It is possible to show in the last case of the proof above that
$\mathcal{K}(P^n|U)$ is a Cantor set. Indeed, as a postcritically-finite polynomial, $P$ is
\emph{subhyperbolic}. Then $P$ is \emph{expanding} with respect
to the canonical orbifold metric on a neighborhood of the Julia
set $\JC_P$ (see \cite[Chapter 19, Theorem 19.6]{Milnor_Book} or
\cite[Proposition~A.33]{THEbook}).
 }

Theorem \ref{thm:non_ren_3post} implies that the polynomial $P$
from Section \ref{sec:non-renorm-polyn} is not
renormalizable. Thus $P$ satisfies property \ref{cond:non_renorm}
of Theorem~\ref{thm:P_not_renorm_exp_growth}. This finishes the
proof of Theorem~\ref{thm:P_not_renorm_exp_growth}.

\section{Appendix A}

In the literature, one usually studies properties of a self-similar group $G$ via its wreath recursion. In particular, see the proof of exponential growth of the Basilica group, that is, $\img(z^2-1)$, in \cite[Proposition~4]{GriZ_Basilica} or the study of iterated monodromy groups of postcritically-finite quadratic polynomials in \cite{BarNekr_Quad}. The goal of this appendix is to show how these techniques apply in our setting, that is, when $G$ is the iterated monodromy group of a Thurston map.

\subsection{Appendix A.1: Actions on rooted trees and self-similar groups}
\label{app:self_sim_gr}
For the convenience of the reader we review the definition of
self-similar groups and some closely related notions, more
details can be found in \cite[Chapters~1 and 2]{Nekra}.  

Choose an \emph{alphabet}
$X$ of $d$ letters. The set of all words in $X$ of length $n\in
\N$ is denoted by $X^n$. The empty word is the only word of
length $0$ and denoted by $\emptyset$. Consequently we set
$X^0=\{\emptyset\}$.
Let $X^*=\bigcup_{n\in\N_0} X^n$ be the set of all finite
words in the alphabet $X$. Then $X^*$ has a natural structure
of a $d$-ary rooted tree: we define the root to be $\emptyset$ and connect every word $v\in X^n$ to all words of the form
$vx\in X^{n+1}$ for an arbitrary letter $x\in X$ and each $n\in\N_0$. The set $X^*$ viewed as a
rooted tree is called the \emph{tree of words} in the alphabet
$X$ and is denoted by $T$. It is evident that the $n$-th level of the tree $T$ is given by the words in $X^n$, $n\in\N_0$.


Let $\Aut(T)$ be the automorphism group of the tree $T$, that is, the group of all bijective maps $g:T\to T$ that preserve the adjacency of the vertices of $T$. We consider the \emph{right action} of $\Aut(T)$ on the tree $T$. So, the image of a vertex $v$ under the action of an element $g\in \Aut(T)$ is denoted by $v^g$, and in the product $g_1 g_2$ the element $g_1$ acts first. 

\begin{definition}\label{def:level_stab}
Let $G$ be a subgroup of $\Aut(T)$. The \emph{$n$-th level stabilizer} is the subgroup $\Stab_G(n)$ of those elements of $G$ that fix pointwise all vertices of the $n$-th level $X^n$ of the tree $T$. That is,
\[\Stab_G(n)=\{g\in G: v^g = v \text{ for all } v\in X^n\}.\]
The $n$-th level stabilizer is a normal subgroup in $G$ of finite index for all $n\in \N_0$.
\end{definition}

Let $v\in X^*$ be an arbitrary vertex of the tree of words
$T$. Denote by $T_v$ the subtree of $T$ rooted at $v$ such that
the vertex set of $T_v$ is $\{vx: x\in X^*\}$. Clearly, $T_v$ is
isomorphic to the whole tree of words $T$ via the shift $\iota_v\colon T_v \to T$ defined by $vu \mapsto u$ for $u\in X^*$. 

\hide{Let $g$ be an element of the automorphism group $\Aut(T)$ of the tree $T$. It is evident, that the action of $g$ descends to the $n$-th level of $T$, that is, to the set $X^n$. }
For every $g\in\Aut(T)$ and $v\in X^*$, we define an automorphism $g|_v: T \to T$, called the \emph{restriction} of $g$ to the subtree $T_v$, by 
\[g|_v = \iota_{v^g} \circ g \circ \iota^{-1}_v.\]

To simplify the notation assume that $X =\{1, . . . , d\}$. Denote by $\Sigma(X)$ the symmetric group of permutations of the set $X$. Then every element $g\in\Aut(T)$ can be written in the following form,  called the \emph{wreath recursion} of $g$,
\[ g=\llangle g|_{1}, \dots, g|_{d}\rrangle \sigma_g, \]
where $\llangle g|_{1}, \dots, g|_{d}\rrangle \in  \Stab_{\Aut(T)}(1)\cong\Aut(T)^X$, that is, an element of the direct product $\Aut(T)\times \cdots \times\Aut(T)$ with $d=\# X$ factors, and $\sigma_g\in \Sigma(X)$ is the permutation equal to the action of $g$ on $X^1$. Formally, there is a canonical isomorphism
\begin{equation}
  \label{eq:wreath_isom}
\psi: \Aut(T) \to \Aut(T)^X \rtimes \Sigma(X),
\end{equation}
where the semidirect product is taken with respect to the natural action of $\Sigma(X)$ on the factors of $\Aut(T)^X$ (an element $\sigma\in\Sigma(X)$ acts on $\Aut(T)^X$ by the permutation of the factors coming from its action on $X$, that is, $(g_1, \dots, g_d)^\sigma=(g_{1^\sigma}, \dots,g_{d^\sigma})$).  In other words, the automorphism group $\Aut(T)$ is isomorphic to the \emph{permutational wreath  product} $\Aut(T)\wr \Sigma(X)$. That is, the wreath recursions of arbitrary elements $g,h\in\Aut(T)$ are multiplied according to the rule
\[\displaystyle\llangle g_{1}, \dots, g_{d}\rrangle \sigma \cdot \llangle h_{1}, \dots, h_{d}\rrangle \tau = \llangle g_{1}h_{1^{\sigma}}, \dots, g_{d}h_{d^{\sigma}}\rrangle \sigma \tau.\]

\begin{definition}\label{def:self-similar_group}
A group $G<\Aut(T)$ acting faithfully on the tree $T=X^*$ is said to be \emph{self-similar} if for every $g\in G$ and every $x\in X$ there exist $h\in G$ and $y\in X$ such that 
\[
(xv)^g=y(v)^h
\]
for all $v\in X^*$. Put differently, $G$ is called self-similar if each restriction $g|_v$ belongs to $G$ for all $g\in G$ and $v\in X^*$.
\end{definition}

It is clear from the definition that a group $G<\Aut(T)$ is self-similar if and only if $g|_x\in G$ for all $x\in X^1$ and all generators $g$ of $G$. 
Furthermore, every self-similar group $G$ has an \emph{associated wreath recursion}, that is, a homomorphism $\psi:G\to G\wr\Sigma(X)$ given by the restriction of the canonical isomorphism \eqref{eq:wreath_isom} to $G$.


\begin{definition}
Let $G<\Aut(T)$ be a self-similar group. $G$ is said to be \emph{recurrent} (or \emph{self-replicating}) if its action is transitive on the first level $X^1$ of the tree $T=X^*$ and for some (and thus for all) $x \in X$ the homomorphism $\psi_x:G_x\to G$ given by $g\mapsto g|_x$ is onto, where $G_x=\{g\in G: x^g=x\}$ is the stabilizer of $x$ in $G$. 
\end{definition}

Note that if a self-similar group is recurrent, then it is transitive on every level of the tree $T$ (the group is then called \emph{level-transitive}).

\begin{definition}
A self-similar group  $G<\Aut(T)$ is called \emph{regular branch} if there exists a finite index subgroup $H$ of $G$ such that $H^X < \psi(H)$, where $\psi:G \to G \wreath \Sigma(X)$ is the wreath recursion associated with $G$. In such a case we say that $G$ is regular branch over $H$.
\end{definition}


\longhide{
 \reminder{This implies that there are non-trivial rigid stabilizers of any vertex.}
 
\begin{Reminder}\mbox{}

Let $G$ be a group acting on a rooted $d$-ary tree $T^d$. For a vertex $v$ of $T^d$, let $T_v$
denote the subtree hanging down at vertex $v$ and for an element $g\in G$ let $\supp(g)$
be the \emph{support} of $g$, that is, the set of vertices not fixed by $g$. Also, for a vertex $v$ and an element $g\in G$ we denote by $g_v$ the restriction of $g$ on $T_v$.

The \emph{stabilizer of a vertex} $v$ is the subgroup $G_v = \{g \in G | g(v) = v\}$. The \emph{rigid stabilizer of a vertex} $v$ is the subgroup $G[v] = \{g \in G | \supp(g) \in T_v \}$. 

The \emph{stabilizer of level} $n$ is the subgroup $$\Stab_G(n)=\{g \in G | g(v) = v \text{ for all } v \text{ with } |v|=n\}=\bigcap_{|v|=n} G_v.$$ The \emph{rigid stabilizer of level} $n$ is the subgroup $$\Rist_G(n) = \langle G[v] | \quad |v| = n\rangle.$$ Since rigid stabilizer of distinct vertices of the same level commute, we have $$\Rist_G(n) = \prod_{|v|=n} G[v].$$

\begin{definition}
Let $G$ be group of automorphisms of a rooted tree $T^d$. $G$ is said to be a \emph{near branch group} (resp. \emph{weakly near branch group}) if for all $n\geq 1$, the subgroup $\Rist_G(n)$ has finite index in $G$ (resp. is nontrivial). If in addition $G$ acts level transitively (that is, transitively on each level of the tree) than $G$ is called a \emph{branch group} (\emph{weakly branch group}) respectively.
\end{definition}

\begin{definition}
Let $G$ be a self-similar group of automorphisms of a $d$-ary rooted tree. $G$ is said to be \emph{self-replicating} if for all $g\in G$ and all $x \in \{1, 2, \dots, d\}$, there exists an element $h\in \Stab_G(1)$ such that $h_x = g$.
\end{definition}

\end{Reminder}
}

\subsection{Appendix A.2: Further properties of $\img(f_1)$}
\label{app:alg_prop_snow}
Here we revisit the iterated monodromy group of the map $f_1$
from Section~\ref{sec:constr-f}. Let $a,b,c$ be the generators of $\img(f_1)$ as in Section \ref{sec:tiles-flow-iter}. That is, $\img(f_1)$ acts on the dynamical preimage tree $\bT_{f_1}$ that is identified with $\{1,\dots,6\}^*$ and the wreath recursions of the generators are given by \eqref{eq:wreath_f}.
We use this to prove additional properties of $\img(f_1)$. The
discussion is kept rather short, to avoid excessive details. 


We start with giving alternative proofs of Corollary
\ref{cor:ab4_inf_order} and Lemma \ref{lem:free_semi_gp} (which implies exponential growth of
$\img(f_1)$). 

\begin{proof}

To save space we will only prove that $x\coloneqq ab^{4}$ and $y\coloneqq  ab^{12}$ generate a free semigroup in $\img(f_1)$ (the proof is analogous if we introduce the third generator $z\coloneqq ab^{20}$).


Using \eqref{eq:wreath_f} one can compute the following identities
\begin{align}
  \label{eq:wreath_semigroup}
  x^{2}&=\llangle x,\cdot,x^s,\cdot,\cdot,\cdot \rrangle \;
  \\
  \notag
  y^{2}&=\llangle y,\cdot,y^s,\cdot,\cdot,\cdot \rrangle \;
  \\
  \notag
  xy&=\llangle y,\cdot,x^s,\cdot,\cdot,\cdot \rrangle \;
  \\
  \notag
  yx&=\llangle x,\cdot,y^s,\cdot,\cdot,\cdot \rrangle,
\end{align}
where $s=ab^{-1}$ and $\cdot$ represents the omitted terms.

\longhide{
\begin{align}
  \label{eq:wreath_semigroup}
  x^{2n}&=\llangle x^n,\cdot,(x^n)^t,\cdot,\cdot,\cdot \rrangle \;
  \\
  \notag
  y^{2n}&=\llangle y^n,\cdot,(y^n)^t,\cdot,\cdot,\cdot \rrangle \;
  \\
  \notag
  z^{2n}&=\llangle z^n,\cdot,(z^n)^t,\cdot,\cdot,\cdot \rrangle \;
  \\
  \notag
  xy&=\llangle y,\cdot,(x)^t,\cdot,\cdot,\cdot \rrangle \;
  \\
  \notag
  xz&=\llangle z,\cdot,(x)^t,\cdot,\cdot,\cdot \rrangle \;
  \\
  \notag
  yx&=\llangle x,\cdot,(y)^t,\cdot,\cdot,\cdot \rrangle \;
  \\
  \notag
  yz&=\llangle z,\cdot,(y)^t,\cdot,\cdot,\cdot \rrangle \;
  \\
  \notag
  zx&=\llangle x,\cdot,(z)^t,\cdot,\cdot,\cdot \rrangle \;
  \\
  \notag
  zy&=\llangle y,\cdot,(z)^t,\cdot,\cdot,\cdot \rrangle.
\end{align}
}

The first two identities imply that $x$ and $y$ are elements of
infinite order in $\img(f_1)$. Indeed, the order of $x$ cannot be
an odd number, because $x$ acts as the permutation $(1 3) (2 5) (4 6)$ on the first level of the dynamical preimage tree $\bT_{f_1}$. If $\ord(x)=2n$, $n\in \N$, then \eqref{eq:wreath_semigroup} imply that $x^n = 1$, contradiction. The same argument applies to $y$.

Now let $w$ be an arbitrary word in $x$ and $y$. Here and throughout the proof, $|w|$ denotes the length of the word $w$ with respect to the alphabet $\{x,y\}$. If $|w|$ is even, $w$ can be written uniquely as a product of $x^2$, $y^2$, $xy$, and $yx$. From (\ref{eq:wreath_semigroup}) it follows that $\psi_1(w)$ is a word in $\{x,y\}$ and $|\psi_1(w)|=|w|/2$, where $\psi_1$ denotes the projection onto the first coordinate in the canonical isomorphism \eqref{eq:wreath_isom}, that is, $\psi_1(w)=w|_1$. Moreover, $\psi_1(w)$ ends with the same letter ($x$ or $y$) as $w$.

Consider now any two distinct words $w_1$ and $w_2$ in the alphabet $\{x,y\}$. We are going to show that they represent different group elements in $\img(f_1)$. The proof is by induction on $|w_1|$+$|w_2|$.

The base cases $|w_1|+|w_2|\in\{1,2\}$ can easily be
verified. Now let $|w_1|+|w_2| \geq 3$. By multiplying from the
right with the inverse of the common ending word, it is enough to
assume that $w_1$ and $w_2$ end with different letters. We can
further assume that the parity of $|w_1|$ and $|w_2|$ is the same
(for otherwise, $w_{1}w_{2}^{-1}$ does not fix the first
level). 
If $|w_i|$ is odd set $w_{i}'=xw_{i}$, otherwise set
$w_{i}'=w_{i}$, for $i=1,2$. Then $\psi_1(w_{1}')$ and $\psi_1(w_{2}')$ are two distinct words in $\{x,y\}$. Indeed, they end with different letters since $w'_1$ and $w'_2$ do.
Furthermore, 
\begin{equation*}
\label{eq:prod_of_wr_elem}
|\psi_1(w_{1}')|+|\psi_1(w_{2}')| \leq \frac{|w_1|+1}{2}+\frac{|w_2|+1}{2} < |w_1|+|w_2|.
\end{equation*}
Thus the induction hypothesis applies to $\psi_1(w_{1}')$ and $\psi_1(w_{2}')$, which finishes the proof.
\end{proof}

\longhide{
Note that $T=[1,\infty] \cup [-\infty, -1] \subset \R$ is an
$f$-invariant tree containing $\post(f)$ (we have $f(T)=
[1,\infty]$). Let $a$ be the loop around $1$, and $b$ be the
loop around $-1$ (both mathematically positively oriented). The
wreath recursions are
\begin{align}
  \label{eq:wreath_f_triag_snow}
  a&= \llangle a, a^{-1}, 1, b, b^{-1},1 \rrangle (2356)
  \\
  \notag
  b&= (123) (456).
\end{align}

Let $G=\img(f)=\langle a,b\rangle$.

\begin{itemize}
\item $a^{24}=b^3=c^2=1$, where $c=(ab)^{-1}$.

\item $\Stab_1(G)=\langle S \rangle$, where $S=\{(a^4)^g: g\in\{1, b, b^{-1}, ab^{-1}, a^{-1}b, a^{-2}b\}\}$.

\item $a^4 c$ is an element of infinite order.

\item $G$ is regular branch over $H=\langle b, a^2 \rangle^G = \langle b^g, (a^2)^g: g\in G\rangle $ (for this one considers $a^4$ and $(b^{-1}a^4 b a^{-8})^2$).
\end{itemize}
}

To simplify the notation we denote the iterated monodromy group
$\img(f_1)$ by $G$ from now on.


\begin{lemma}
$G$ is recurrent, consequently, it is level-transitive.
\end{lemma}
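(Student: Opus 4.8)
The plan is to verify the two defining properties of a recurrent group directly from the wreath recursions \eqref{eq:wreath_f}, and then invoke the fact recorded just after the definition of recurrent groups that a recurrent self-similar group is automatically level-transitive. Recall that $G=\img(f_1)$ acts on the first level $X^1=\{1,\dots,6\}$ through the permutation parts $(1\,3)(2\,5)(4\,6)$, $(2\,3\,5\,6)$, and $(1\,2\,3)(4\,5\,6)$ of $a$, $b$, and $c$. First I would check transitivity of this action: starting from $1$, the permutation of $c$ reaches $2$ and $3$; from $2$ the permutation of $a$ reaches $5$ (as $2^a=5$); from $5$ the permutation of $c$ reaches $6$; and from $6$ the permutation of $a$ reaches $4$. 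Hence the action on $X^1$ is transitive.

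It remains to show that $\psi_1\colon G_1\to G$, $g\mapsto g|_1$, is onto, where $G_1=\{g\in G:1^g=1\}$. Since $\psi_1$ is a homomorphism and any two of $a,b,c$ generate $G$, it suffices to produce $b$ and $c$ in its image. For $b$ this is immediate: the permutation $(2\,3\,5\,6)$ fixes $1$, so $b\in G_1$, and the first section of $b$ in \eqref{eq:wreath_f} is $b|_1=b$; thus $b\in\psi_1(G_1)$.

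To obtain $c$, I would exploit that $b|_4=c$ together with $4^b=4$, and transport the index $4$ back to $1$ by conjugation. Set $g_1\coloneqq abc^{-1}$; tracing permutations gives $1^{g_1}=4$, and the restriction cocycle formula $(uv)|_x=u|_x\, v|_{x^u}$ together with $a|_1=b^{-1}$, $b|_3=1$, and the triviality of all sections of $c$ yields $g_1|_1=b^{-1}$. Consider then $h\coloneqq g_1\, b\, g_1^{-1}$. Since $b$ fixes $4$, we get $1^h=1$, so $h\in G_1$, and applying the cocycle formula gives $h|_1=g_1|_1\cdot b|_4\cdot g_1^{-1}|_4=b^{-1}\, c\, b$, where $g_1^{-1}|_4=b$ follows from $(g_1g_1^{-1})|_1=1$. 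Therefore $b^{-1}cb\in\psi_1(G_1)$, and since $b$ also lies in $\psi_1(G_1)$ we conclude $c=b(b^{-1}cb)b^{-1}\in\psi_1(G_1)$. As $b,c$ generate $G$, the map $\psi_1$ is surjective, so $G$ is recurrent and hence level-transitive.

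The routine part is the bookkeeping of sections and permutations under the right action; the only point that needs care is the composition order in $(uv)|_x=u|_x\, v|_{x^u}$ and the evaluation of $g_1^{-1}|_4$, which I would pin down using $(g_1g_1^{-1})|_1=1$ rather than inverting $g_1$ explicitly. I do not expect a genuine obstacle beyond these elementary computations.
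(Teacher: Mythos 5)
Your proof is correct and follows essentially the same strategy as the paper: check transitivity on the first level directly from the permutation parts of $a,b,c$, then show $\psi_1\colon G_1\to G$ is onto by exhibiting elements of $G_1$ whose first sections generate $G$. The only difference is the witness element — the paper uses $(b^4)^c\in G_1$ with $\psi_1\bigl((b^4)^c\bigr)=c^{-1}b^{-1}$, whereas you conjugate $b$ by $abc^{-1}$ to get first section $b^{-1}cb$; both computations check out and yield $\langle b,c\rangle=G$ in the image.
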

\begin{proof} It is evident that $G$ acts transitively on the first level of $\bT_{f_1}$, see Figure \ref{fig:Schreierf1_11}. Using \eqref{eq:wreath_f}, we check that $b, (b^4)^c \in G_1$ ($=\{g\in G: 1^g=1\}$) and $\psi_1(b)=b$ , $\psi_1((b^4)^c)=c^{-1}b^{-1}$. Since $\langle b,c^{-1}b^{-1}\rangle =G$, the statement follows.
\end{proof}

\begin{prop}
$G$ is regular branch over \[H\coloneqq \langle[b^2,c]\rangle^G = \langle [b^2,c]^g: g\in G\rangle.\]
\end{prop}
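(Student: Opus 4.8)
The plan is to check the two defining properties of a regular branch group separately: that $\psi(H)$ contains the sixfold direct power $H^X=H\times\dots\times H$, and that $[G:H]<\infty$. Here $\psi\colon G\to G\wr\Sigma(X)$ is the wreath recursion attached to the generators $a,b,c$ via \eqref{eq:wreath_f}, with $X=\{1,\dots,6\}$. Observe at the outset that $H$ is normal in $G$, being a normal closure, and nontrivial, and recall that $G$ is recurrent and hence level-transitive.

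For the inclusion $H^X\le\psi(H)$ I would first reduce to a single coordinate. The group $H^X$ is generated by the elements $\delta_i(h)\coloneqq\llangle 1,\dots,h,\dots,1\rrangle$ with $h$ in position $i$, where $i$ runs over $X$ and $h$ over the generators $[b^2,c]^g$, $g\in G$, of $H$. Suppose one produces a single $w\in H$ with $\psi(w)=\delta_1([b^2,c])$. Conjugating $\psi(w)$ by $\psi(g)$ for $g$ in the letter stabilizer $G_1=\{g\in G:1^g=1\}$ yields $\delta_1([b^2,c]^{g|_1})$; since $G$ is recurrent, $g|_1$ runs through all of $G$, so $\delta_1(h)\in\psi(H)$ for every $h\in H$. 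Conjugating instead by a $\psi(g)$ with $1^{\sigma_g}=i$ (possible by transitivity on $X^1$) moves the nontrivial entry to coordinate $i$, giving $\delta_i(h)\in\psi(H)$ for all $i$ and $h$. At each step normality of $H$ in $G$ ensures $\psi(g)^{-1}\psi(H)\psi(g)=\psi(H)$ and that the surviving entries remain in $H$. Thus all generators of $H^X$ lie in $\psi(H)$.

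Everything therefore hinges on exhibiting one such $w$, and this is the main obstacle. A direct computation with \eqref{eq:wreath_f} gives
\begin{equation*}
\psi([b^2,c])=\llangle b^{-2}c^{-1},\,1,\,bc^{-1},\,cb^{-1},\,b^{3},\,cb^{-1}\rrangle\,(1\,4)(2\,5),
\end{equation*}
whose square lies in $\Stab_G(1)$:
\begin{equation*}
\psi([b^2,c]^2)=\llangle b^{-3},\,b^{3},\,(bc^{-1})^2,\,cb^{-3}c^{-1},\,b^{3},\,(cb^{-1})^2\rrangle.
\end{equation*}
The difficulty is that these restrictions are words in $b$ and $c$ whose images in the (finite) abelianization $G^{\mathrm{ab}}$ are nontrivial, so they do not lie in $H\le G'$; hence $[b^2,c]^2$ is not itself an element of $H^X$. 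I would form products of $[b^2,c]^{\pm2}$ with its conjugates by $a,b,c$ — whose recursions permute and twist the six coordinates — and search for a combination in which every such spurious restriction cancels and a single conjugate $[b^2,c]^g$ survives in one coordinate. This is a finite but delicate bookkeeping inside $G\wr\Sigma(X)$; once one valid $w$ is found, the reduction above completes the proof that $H^X\le\psi(H)$.

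It remains to prove $[G:H]<\infty$. Since the relations $a^2=b^{24}=c^3=acb=1$ do not by themselves force $G/H$ to be finite, this step must use the self-similar structure. I would compute the quotient $G/H$ directly — equivalently, show that $H$ contains a level stabilizer $\Stab_G(N)$ — by propagating the relation $[b^2,c]\in H$ through the recursion and verifying that the resulting quotients on successive levels stabilize to a finite group. Combined with the inclusion $H^X\le\psi(H)$ established above, finite index of $H$ yields that $G$ is regular branch over $H$, as asserted.
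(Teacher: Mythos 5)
Your overall architecture matches the paper's: find a single element $w\in H$ whose wreath recursion is supported in one coordinate with entry $[b^2,c]$, then use normality of $H$ together with recurrence and level-transitivity to propagate this to all of $H^X$, and separately prove $[G:H]<\infty$. Your computations of $\psi([b^2,c])$ and $\psi([b^2,c]^2)$ are correct, and your reduction from one such $w$ to $H^X\le\psi(H)$ is sound. However, the two steps that carry all the content are left as unexecuted searches, and this is a genuine gap. First, you never exhibit the element $w$; you only describe a heuristic search among products of conjugates of $[b^2,c]^{\pm 2}$, and you give no argument that this search terminates successfully. The paper produces the element explicitly: starting from
\begin{equation*}
b^{-8}=\llangle b^{-8},1,1,c,1,1\rrangle,\qquad (b^{8}c^{-1}b^{-4}c)^2=\llangle (b^{9}c)^2, b^{-8},1,b^2,c,1\rrangle
\end{equation*}
(simplified using $(bc)^2=c^3=1$), one takes $w=[(b^{8}c^{-1}b^{-4}c)^2,b^{-8}]$, whose recursion is $\llangle 1,1,1,[b^2,c],1,1\rrangle$ because $[(b^9c)^2,b^{-8}]=1$. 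Crucially, $w\in H$ not because it is a product of conjugates of $[b^2,c]$ in any visible way, but because $b^{-8}=(b^2)^{-4}$ is central in $G/H$ (modulo $H$ the element $b^2$ commutes with $c$ and with $b$, hence with everything), so any commutator with $b^{-8}$ dies in $G/H$. This membership argument is also missing from your reduction, where you would need it for whatever $w$ your search returns.

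Second, your plan for $[G:H]<\infty$ is both unexecuted and aimed in the wrong direction: you assert the step "must use the self-similar structure," but the paper's argument is purely a normal-form computation in the abstract quotient. Since $b^2$ is central in $G/H$, every element of $G/H$ can be written as $(b)c^{\pm1}bc^{\pm1}\cdots bc^{\pm1}b^k$ with $k\in\{0,\dots,23\}$, and the relation $(bc)^2=1$ (which follows from $acb=1$ and $a^2=1$) collapses this to $(b)c^{\pm1}b^k$; hence $G/H$ is finite. No propagation through the recursion, and no containment of a level stabilizer, is needed. As written, your proposal is a correct proof skeleton with the two load-bearing computations missing.
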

\begin{proof}
First we verify that $H^6<\psi(H)$. To this end, we use the wreath
recursions 
\eqref{eq:wreath_f} and the fact that $(bc)^2=c^3=1$ to obtain
the following identities.~\footnote{The ``FR package'', written
  by L. Bartholdi, for the computer algebra system GAP, is very
  helpful for such computations.} 
\begin{align*}
  b^{-8}&=\llangle b^{-8}, (cb)^2, (bc)^2, c^{-8}, (bc)^2, (cb)^2 \rrangle \;
  \\
  &= \llangle b^{-8}, 1, 1, c, 1, 1 \rrangle \;
  \\
  b^{8}c^{-1}b^{-4}c&=\llangle b^{9}c, (b^{-1}c^{-1})^{2}b^{-4},(c^{-1}b^{-1})^{2}cb, c^{9}b, (c^{-1}b^{-1})^{2}c^{-4}, (b^{-1}c^{-1})^{2}bc\rrangle 
  \\
  &= \llangle b^{9}c, b^{-4}, cb, b, c^{-1}, bc \rrangle \;
  \\
   (b^{8}c^{-1}b^{-4}c)^2&= \llangle (b^{9}c)^2, b^{-8}, 1, b^{2}, c, 1 \rrangle .
\end{align*}
Then we consider the following commutator
\begin{align}
  \label{eq:wreath_comm_restr}
  [(b^{8}c^{-1}b^{-4}c)^2,b^{-8}]&=\llangle [(b^{9}c)^2,b^{-8}], 1, 1, [b^2,c], 1, 1 \rrangle \;
  \\
  \notag
  &= \llangle 1, 1, 1, [b^2,c], 1, 1 \rrangle.
\end{align}
Here we use that $[(b^{9}c)^2,b^{-8}]=1$ as one can verify (At the same time, $[b^{9}c,b^{-8}]\neq 1$. For that reason $H$ is not chosen to be the commutator subgroup $[G,G]$ of $G$).

Since $G$ is recurrent, (\ref{eq:wreath_comm_restr}) implies that $1\times 1\times 1\times H \times 1\times 1 < \psi(H)$, consequently, $H^6<\psi(H)$.

Thus, we are only left to check that $H$ has finite index in $G$. By construction, $H$ is a normal subgroup of $G$. Furthermore, a generic element of the quotient  group $G/H$ can be written in the form 
\begin{equation}
\label{eq:norm_quot_form}
(b)c^{\pm1}bc^{\pm1}\dots bc^{\pm1}b^k, \quad k\in\{0,\dots,23\},
\end{equation}
where the notation ``$(b)$'' means that $b$ may or may not appear as the first letter in the word.
Since $(bc)^2=1$, we can further normalize \eqref{eq:norm_quot_form} to
\begin{equation*}
(b)c^{\pm1}b^k, \quad k\in\{0,\dots,23\}.
\end{equation*}
Hence the quotient subgroup $G/H$ is finite.
\end{proof}

We close the appendix with two corollaries of the previous proposition.

\begin{cor}
\label{inf_abel_subgr}
$G$ contains a subgroup isomorphic to $\Z^n$ for each $n\in\N$. 
\end{cor}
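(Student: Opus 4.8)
The plan is to exploit the regular branch structure of $G$ over $H$ from the previous proposition, together with the existence of an element of infinite order, to produce arbitrarily many pairwise commuting infinite-order elements supported on disjoint subtrees of $\bT_{f_1}$. First I would locate an element of infinite order inside $H$ itself. By Corollary~\ref{cor:ab4_inf_order} the element $ab^4\in G$ has infinite order, and in the proof of the previous proposition $H$ was shown to be a normal subgroup of finite index in $G$. Hence $G/H$ is a finite group, so $\eta\coloneqq (ab^4)^{\abs{G/H}}\in H$, and $\eta$ still has infinite order.

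Next I would transport $\eta$ into arbitrarily many positions deep in the tree. For a vertex $v\in X^*$ of $\bT_{f_1}=\{1,\dots,6\}^*$, let $H_v$ denote the group of automorphisms that are supported on the subtree $T_v$ and whose restriction to $v$ lies in $H$; the restriction map $g\mapsto g|_v$ is then an isomorphism $H_v\xrightarrow{\sim} H$. The key point, proved by induction on the length $\abs{v}$ using the inclusion $H^6<\psi(H)$, is that $H_v<G$ (in fact $H_v<H$): for $\abs{v}=1$ this is exactly the regular branch condition, and for $v=xw$ one takes the element of $H_w<H$ realizing a given $h\in H$, which exists by the induction hypothesis, and places it at the coordinate $x$, landing again in $H$ by $H^6<\psi(H)$.

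Finally, fix $n\in\N$ and choose a level $k$ with $6^k\geq n$, together with $n$ distinct vertices $v_1,\dots,v_n$ at level $k$. Let $\eta_i\in H_{v_i}<G$ be the unique element with $\eta_i|_{v_i}=\eta$. Since the subtrees $T_{v_1},\dots,T_{v_n}$ are pairwise disjoint, the supports of $\eta_1,\dots,\eta_n$ are disjoint, so these elements commute; moreover any relation $\eta_1^{m_1}\cdots\eta_n^{m_n}=1$ forces $\eta^{m_i}=1$ for each $i$ upon restricting to $v_i$, whence $m_i=0$ because $\eta$ has infinite order. Thus $\langle \eta_1,\dots,\eta_n\rangle\cong\Z^n$, which proves the claim.

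The step I expect to demand the most care is the inductive verification that $H_v<G$ for every vertex $v$, i.e.\ that $G$ contains a copy of $H$ in the rigid stabilizer of each vertex, since this is where the regular branch hypothesis $H^6<\psi(H)$ and the recurrence of $G$ enter essentially; once this is in place, the commutativity and $\Z$-independence of the resulting generators are routine consequences of the disjointness of the supports and of $\ord(\eta)=\infty$.
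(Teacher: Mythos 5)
Your proof is correct and follows essentially the same strategy as the paper: produce an infinite-order element inside $H$ and then use the regular branch inclusion $H^6<\psi(H)$ to plant commuting copies of it on disjoint subtrees. The only (harmless) difference is in locating the infinite-order element of $H$ --- the paper exhibits $[c,b^4]\in H$ explicitly and checks $\psi_1([c,b^4])=(ab^4)^{b^{-1}}$ via the wreath recursion, while you take $(ab^4)^{\abs{G/H}}$, which works because $H$ was shown to be normal of finite index.
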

\begin{proof}
First, we observe that $[c,b^4]\in H$ is of infinite order. Indeed, $[c,b^4]\in \Stab_G(1)$ and $\psi_1([c,b^4])=(ab^4)^{b^{-1}}$. Now the statement follows from $H^6 < \psi(H)$.
\end{proof}

\begin{cor}
\label{finite_L_pres}
$\img(f_1)$ has a finite \emph{endomorphic presentation}, that
is, a finite recursive presentation as introduced in \cite{Bar_Branch}. However, it is not finitely presented.
\end{cor}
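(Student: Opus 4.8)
The plan is to obtain both assertions from the theory of contracting regular branch groups, applied to the structural facts just established for $G=\img(f_1)$. The key preliminary observation will be that $G$ is \emph{contracting} as a self-similar group. Since $f_1$ is a postcritically-finite rational map with hyperbolic orbifold (Section~\ref{sec:constr-f}), it is sub-hyperbolic, hence expanding with respect to the canonical orbifold metric on a neighbourhood of its Julia set (see \cite[Chapter~19]{Milnor_Book} or \cite[Proposition~A.33]{THEbook}). By Nekrashevych's theory \cite{Nekra}, the iterated monodromy group of such an expanding map is contracting. Combining this with the preceding proposition, which exhibits $G$ as a regular branch group over $H=\langle[b^2,c]\rangle^G$, and with the recurrence of $G$ established above, places $G$ squarely in the class of finitely generated, contracting, recurrent, regular branch groups.

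For the existence of a finite endomorphic presentation I would then appeal directly to the main result of \cite{Bar_Branch}: every finitely generated contracting regular branch group admits a finite $L$-presentation (finite endomorphic presentation). Concretely, the finite set of initial relators can be assembled from $a^2=b^{24}=c^3=1$ (Lemma~\ref{lem:order_alpha}) and $acb=1$, while the finitely many substitutions are manufactured from the wreath recursion \eqref{eq:wreath_f} together with the inclusion $H^6<\psi(H)$; the recurrence of $G$ guarantees that iterating these substitutions reproduces all remaining relators. I would not record the presentation explicitly, since only its existence is asserted.

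For the failure of finite presentability I would use that a finitely generated regular branch group is never finitely presented, a fact belonging to the same circle of ideas. Note first that $G$ is genuinely infinite (it contains $\Z^n$ for every $n$ by Corollary~\ref{inf_abel_subgr}) and that the branching subgroup is nontrivial, since $[b^2,c]\neq 1$. The argument I would give is that no finite subset of the relators produced by the substitutions suffices: assuming a finite presentation on the generators, one pushes the putative defining relators down through the branching inclusion $H^6<\psi(H)$ and produces, at arbitrarily deep levels, relations of $G$ that cannot be consequences of the given finite set, contradicting finite presentability.

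I expect this last step—the \emph{non-reducibility} of the $L$-presentation to a finite one, rather than the construction of the $L$-presentation—to be the main obstacle. The delicate point is to verify that the substitutions genuinely yield independent new relations at infinitely many levels, and this is exactly where the nontriviality of $H$ and the branch structure are indispensable; the contracting property, in turn, is what makes the downward passage through $\psi$ well controlled. Once this is in place, the two conclusions of the corollary follow simultaneously.
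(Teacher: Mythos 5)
Your proposal follows essentially the same route as the paper, which disposes of both claims in a single line by invoking \cite[Theorem 1]{Bar_Branch} for finitely generated contracting regular branch groups, the contracting property being quoted from \cite[Theorem 6.4.4]{Nekra} and the branch structure coming from the preceding proposition; your extra sketches of how the $L$-presentation is assembled and why it cannot be reduced to a finite one are re-derivations of the internals of Bartholdi's theorem rather than anything the paper supplies. The one caveat is your side remark that a finitely generated regular branch group is \emph{never} finitely presented: that is stronger than what is known (it is open in general), and the contracting hypothesis is genuinely needed there, as your own argument in fact uses.
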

\begin{proof}
An immediate corollary of \cite[Theorem 1]{Bar_Branch}, since iterated monodromy groups of postcritically-finite rational maps are \emph{contracting} \cite[Theorem 6.4.4]{Nekra}.
\end{proof}

\section{Acknowledgments}
M.H. is supported by the Studienstiftung des Deutschen
Volkes and is grateful for its continuing support. 
D.M. has been 
supported by the Academy of Finland via the Centre of Excellence
in Analysis and Dynamics Research (project No. 271983).
The authors thank the University of Jyv\"{a}skyl\"{a} and the
University of California, Los Angeles for their hospitality where
a part of this work has been done during research visits of the
authors. The authors are grateful for insightful discussions with
Mario Bonk, Dzmitry Dudko, Volodymyr Nekrashevych, Kevin
Pilgrim, and Palina Salanevich. Jana Kleineberg helped prepare some of the pictures.

\smallskip

\longhide{
\section{Outtakes}
\label{sec:outtakes}

\textsf{Add to introduction:
The criterion in Theorem \ref{thm:exp_growth_gen} is used to produce more motivating examples of rational maps with iterated monodromy groups of exponential growth. Also add smth like this: we want the criterion to be as simple as possible but at the same time flexible enough to produce desired examples + we know that the conditions on the map can be relaxed. Maybe, we should add another short section with two other examples (renormalizable with a tree and not-renormalizable).
}

\smallskip

\textsf{I am going to list more general conditions below and under the current ones (just to record them). I do not think that we should include all of them. In particular, one can assume the following:
\begin{enumerate}
\item there exists an essential $g$-invariant curve $\CC_{p}$ around some $p\in\post(g)$ (up to an isotopy rel. $\post(g)$) so that $\deg(g:\CC_p\to\CC_p)\geq 2$. \\
In the case when $\CC_p$ is a simple curve, this condition says that the map $g$ is \emph{renormalizable} (in the classical sense) to some Thurston map $h$ after collapsing $\CC_{p}$ to a point. Note that maps $f_1$ and $P$ \textit{do not} satisfy this condition (with a simple curve assumption), but the Sierpinski and obstructed examples do. However, all examples \emph{do} satisfy this assumption if we do not assume that $\CC_p$ is simple, though one needs to clarify what does ``essential'' mean in this case.
\item $g(p)=p$.
\item $\deg(g,p)=1$.
\item $\exists v$ outside the renormalization domain so that $g^k(v)=p$ and $\deg(g^k,p)$ is not a multiple of $\alpha_h(p)$.
\end{enumerate}
(1-2-3-4) imply that $\img(g)$ contains a free semigroup.
}

\textsf{\textcolor{red}{M: Probably we should uniformize the style of the references in the bibliography, in particular, the authors}} 
}

\bibliographystyle{alpha}
\bibliography{main}

\end{document}